\newcommand{\stkout}[1]{\ifmmode\text{\sout{\ensuremath{#1}}}\else\sout{#1}\fi}
\newcommand{\R}{\mathbb{R}}
\newcommand{\Rn}{\mathbb{R}^n}
\theoremstyle{plain}
\newtheorem{thm}{Theorem}[section]
\theoremstyle{plain}
\newtheorem{lem}[thm]{Lemma}
\newtheorem{prop}[thm]{Proposition}
\newtheorem{defi}[thm]{Definition}
\newtheorem{rem}{Remark}[section]
\theoremstyle{definition}
\newtheorem*{maintheorem*}{Main Theorem}
\newtheorem*{maincorollary*}{Main Corollary}
\newcommand{\norm}[1]{\ensuremath{\left\|#1\right\|}}
\newcommand{\cone}{\ensuremath{\mathcal{C}}}
\newcommand{\cD}{\ensuremath{\mathcal{D}}}
\newcommand{\sL}{\ensuremath{\mathscr{L}}}
\newcommand{\cJ}{\ensuremath{\mathcal{J}}}
\newcommand{\dist}{{\rm dist}}
\newcommand{\xbar}{\ensuremath{\bar{x}}}
\newcommand{\ybar}{\ensuremath{\bar{y}}}
\newcommand{\abar}{\ensuremath{\bar{a}}}
\newcommand{\tail}{{\rm Tail}}
\newcommand{\data}{\texttt{data}}
\newcommand{\dataex}{\texttt{data}_e}
\newcommand{\dx}{\ensuremath{\, dx}}
\newcommand{\dy}{\ensuremath{\, dy}}
\newcommand{\dz}{\ensuremath{\, dz}}
\numberwithin{equation}{section} \allowdisplaybreaks
\title[Lipschitz regularity of fractional $p$-Laplacian]{Lipschitz regularity of fractional $p$-Laplacian}
\begin{document}

\author{Anup Biswas}

\address{Indian Institute of Science Education and Research-Pune, Dr.\ Homi Bhabha Road, Pashan, Pune 411008. INDIA Email:
{\tt anup@iiserpune.ac.in}}

\author{Erwin Topp}

\address{
Instituto de Matem\'aticas, Universidade Federal do Rio de Janeiro, Rio de Janeiro - RJ, 21941-909, BRAZIL.
Email: {\tt etopp@im.ufrj.br}
}

\begin{abstract}
In this article, we investigate the H\"{o}lder regularity of the fractional $p$-Laplace equation of the form $(-\Delta_p)^s u=f$ where $p>1, s\in (0, 1)$
and $f\in L^\infty_{\rm loc}(\Omega)$. Specifically, we prove that $u\in C^{0, \gamma_\circ}_{\rm loc}(\Omega)$ for 
$\gamma_\circ=\min\{1, \frac{sp}{p-1}\}$, provided that $\frac{sp}{p-1}\neq 1$. In particular, it shows that $u$ is locally Lipschitz for $\frac{sp}{p-1}>1$.
Moreover, we show that for $\frac{sp}{p-1}=1$, the solution is locally Lipschitz, provided that $f$ is locally H\"{o}lder continuous.
Additionally, we discuss further regularity results for the fractional double-phase problems.
\end{abstract}

\keywords{Lipschitz regularity, fractional $p$-Laplacian, H\"{o}lder regularity, nonlocal double phase problems, fractional $(p,q)$-Laplacian}
\subjclass[2020]{Primary: 35B65, 35J70, 35R09}

\maketitle


\section{Introduction}
In this article we study the higher H\"{o}lder regularity of the local weak solution of fractional $p$-Laplacian equation given by 
\begin{equation}\label{I01}
(-\Delta_p)^su=f \quad \text{in}\; \Omega\subset\Rn,
\end{equation}
where $p>1, s\in (0, 1), n\geq 1$, and $f\in L^\infty_{\rm loc}(\Omega)$. Here the operator  $(-\Delta_p)^s u$ is formally defined by
\begin{equation*}
(-\Delta_p)^su(x) =  -(1-s)\, {\rm PV}\int_{\Rn} |u(y)-u(x)|^{p-2}(u(y)-u(x))\frac{dy}{|x-y|^{n+sp}},
\end{equation*}
where ${\rm PV}$ stands for the Cauchy Principal Value. 

This operator naturally arises
in the calculus of variations as the Euler-Lagrange equation of the inhomogeneous
 $W^{s,p}$ energy given by
\begin{equation}\label{I02}
u\mapsto \frac{1-s}{p}\iint_{(\Omega^c\times\Omega^c)^c} |u(y)-u(x)|^{p}\frac{dy\dx}{|x-y|^{n+ps}}-\int_{\Omega} fu \dx.
\end{equation}

The factor $(1-s)$ in the above expression ensures that the energy functional in  \eqref{I02} converges (in fact, $\Gamma$-converges) to the 
classical $p$-Dirichlet energy, given by
$$u\mapsto c_{n,p} \int_{\Omega} |\nabla u|^p \dx - \int_{\Omega} f u \dx,$$
as $s\nearrow 1$, where $c_{n,p}$ depends only on $n$ and $p$ (see \cite{BBM01}). That is why \eqref{I01} is considered as the fractional analogue of the classical $p$-Laplace equation.

Recently, in their influential work, Brasco, Lindgren, and Schikorra \cite{BLS18} for $p\geq 2$, and Garain and Lindgren \cite{GL24} for $1<p<2$,
have demonstrated that a local weak solution of \eqref{I01} is {\it almost} $\Gamma:=\min\{1, \frac{1}{p-1}(sp-\frac{n}{q})\}$-H\"{o}lder continuous, provided that
$f\in L^q_{\rm loc}(\Omega)$, that is, for any $\gamma<\Gamma$ we have $u\in C^{0, \gamma}_{\rm loc}(\Omega)$. 
Moreover, counterexamples provided in \cite{BLS18,GL24} support the {\it optimality} of $\Gamma$ in the sense that for 
$q\in (\frac{n}{sp}, \infty), \frac{1}{p-1}(sp-\frac{n}{q})<1,$ and for every $\varepsilon>0$ there
exists $f\in L^q_{\rm loc}(\Rn)$ such that the corresponding solution to \eqref{I01}, with $\Omega=\Rn$, is not in $C^{0, \Gamma+\varepsilon}(B_1)$. 
Furthermore, by \cite[Example~1.6]{BLS18}, it is known that for $p=2, s=\frac{1}{2}$, there exists $f\in L^\infty(\Omega)$ for which the corresponding
weak solution $u$ is not Lipschitz continuous. It should be noted that, in this case, we have $\frac{sp}{p-1}=1$.
This leads to the question of whether the solutions to \eqref{I01} are in $C^{0, \Gamma}_{\rm loc}$ when
$\frac{1}{p-1}(sp-\frac{n}{q})\neq 1$. Our main result of this article provides an answer to this question.

\begin{thm}\label{Main}
Let $u\in W^{s, p}_{\rm loc}(\Omega)\cap L^{p-1}_{sp}(\Rn)$ be a local weak solution to \eqref{I01} and $f\in L^\infty_{\rm loc}(\Omega)$. 
Also, let $\frac{sp}{p-1}\neq 1$. Then
$u\in C^{0, \gamma_\circ}_{\rm loc}(\Omega)$, where $\gamma_\circ=\min\{1, \frac{sp}{p-1}\}$.
\end{thm}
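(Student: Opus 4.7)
The plan is to start from the almost-$\gamma_\circ$ H\"older regularity of \cite{BLS18,GL24} and upgrade it to the sharp endpoint via an iterative discrete-differentiation scheme. The hypothesis $\frac{sp}{p-1}\neq 1$ should intervene to decouple the two natural scales in the decay estimate: the interior term scales like $r^{\gamma_\circ(p-1)}$ while the nonlocal tail scales like $r^{sp}$, and their coincidence at the critical point is precisely what produces the logarithmic loss witnessed by Example~1.6 of \cite{BLS18}.

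After localizing to $B_1\subset\Omega$ and normalizing $u$ and $f$, I would invoke \cite{BLS18,GL24} to obtain $u\in C^{0,\alpha}_{\mathrm{loc}}(B_1)$ for every $\alpha<\gamma_\circ$, with quantitative bounds. The key tool is discrete differentiation: for small $h\in\R^n$ set $v_h(x)=u(x+h)-u(x)$. Subtracting the weak formulations satisfied by $u$ and by $u(\cdot+h)$, and applying the mean-value identity
\begin{equation*}
J_p(a)-J_p(b)=(p-1)\int_0^1|b+\tau(a-b)|^{p-2}\,d\tau\,(a-b),\qquad J_p(t):=|t|^{p-2}t,
\end{equation*}
rewrites the result as a weighted linear nonlocal equation for $v_h$ with symmetric kernel $K_h(x,y)\asymp M_h(x,y)^{p-2}|x-y|^{-n-sp}$, where $M_h$ interpolates the $u$- and $u(\cdot+h)$-increments, modulo a source controlled by $\|f\|_{L^\infty}$.

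On this linearized equation I would run a Caccioppoli/Moser-type excess-decay iteration: using the starting H\"older bound to estimate the nonlocal tail of $v_h$, establish a geometric decay
\begin{equation*}
\mathrm{osc}(v_h,B_{\lambda r})\leq \lambda^{\gamma_\circ}\,\mathrm{osc}(v_h,B_r)+C\bigl(|h|^{\gamma_\circ}+r^{\gamma_\circ}\|f\|_{L^\infty}\bigr)
\end{equation*}
for some fixed $\lambda\in(0,1)$ and all sufficiently small $r$. Iterating down to $r\sim|h|$ yields $\|v_h\|_{L^\infty(B_{1/2})}\leq C|h|^{\gamma_\circ}$, i.e.\ $u\in C^{0,\gamma_\circ}_{\mathrm{loc}}$ in the sub-unit case $\gamma_\circ<1$. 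In the Lipschitz range $\frac{sp}{p-1}>1$ a first pass only reaches $C^{0,1-\varepsilon}$; I would then bootstrap by feeding this sharper bound back into the linearization, re-estimating the tail, and iterating again with target exponent $1$, now exploiting $sp>p-1$ to ensure the relevant geometric series converges.

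The main obstacle I expect is closing the iteration at the endpoint without an $\varepsilon$-loss. The constants in the almost-H\"older estimates of \cite{BLS18,GL24} degenerate as $\alpha\nearrow\gamma_\circ$, so the right object to iterate is not $\mathrm{osc}(u,B_r)$ itself but a normalized excess such as $r^{-\gamma_\circ}\mathrm{osc}(u,B_r)$, and the tail of $v_h$ must be bounded by quantities that remain finite in the limit. The hypothesis $\frac{sp}{p-1}\neq 1$ is exactly what keeps the two competing scaling exponents non-resonant so that the geometric series converges; the failure of this condition is what produces the counterexample in \cite{BLS18}.
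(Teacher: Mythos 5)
Your route (difference quotients $v_h=u(\cdot+h)-u$, linearization via the identity for $J_p(a)-J_p(b)$, and an excess-decay iteration) is genuinely different from the paper's, which works with viscosity solutions and an Ishii--Lions doubling argument, using the equivalence of weak and viscosity solutions from \cite{KKL19}. However, as written your plan has two concrete gaps. First, the central oscillation-decay estimate with rate $\lambda^{\gamma_\circ}$ is asserted but not obtainable from the linearization you set up: the kernel $K_h\asymp M_h^{p-2}|x-y|^{-n-sp}$ has weights that degenerate to $0$ (for $p>2$) or blow up (for $p<2$) where the increments vanish, so De Giorgi--Nash--Moser for that weighted equation only yields $C^\alpha$ decay for some small universal $\alpha$, nowhere near $\gamma_\circ$. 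To reach decay at rate $\gamma_\circ$ by a perturbation/excess-decay scheme one needs the homogeneous problem to decay \emph{strictly faster} than $\gamma_\circ$; in the range $\frac{sp}{p-1}>1$ (where $\gamma_\circ=1$) that amounts to $C^{1,\alpha}$ regularity of fractional $p$-harmonic functions, which the paper explicitly records as an open problem. So the key lemma of your scheme is precisely the unavailable ingredient.

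Second, even granting your displayed inequality
$\mathrm{osc}(v_h,B_{\lambda r})\le \lambda^{\gamma_\circ}\mathrm{osc}(v_h,B_r)+C(|h|^{\gamma_\circ}+r^{\gamma_\circ}\|f\|_{L^\infty})$,
the iteration does not close: a contraction factor $\lambda^{\gamma_\circ}$ against an additive error of the same order $r^{\gamma_\circ}$ is resonant for \emph{every} value of $\frac{sp}{p-1}$, and unrolling along $r_k=\lambda^k$ gives $\mathrm{osc}(v_h,B_{r})\lesssim r^{\gamma_\circ}\log(1/r)+|h|^{\gamma_\circ}$, i.e.\ a logarithmic loss, not $\|v_h\|_{L^\infty}\le C|h|^{\gamma_\circ}$. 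The hypothesis $\frac{sp}{p-1}\neq 1$ therefore does not enter your iteration in the way you describe; to remove the log one needs either a strictly better contraction rate or an error term $o(r^{\gamma_\circ})$, and identifying where that extra room comes from (the paper extracts it from a delicate splitting of the nonlocal integral into a cone, a small ball, an intermediate annulus and the far field, together with a bootstrap on the H\"older exponent $\upkappa$) is exactly the substance of the proof. Also note that for $f$ merely in $L^\infty$ the differenced source $f(\cdot+h)-f(\cdot)$ gains nothing from the differencing, so the error $|h|^{\gamma_\circ}\|f\|_{L^\infty}$ in your display is itself unjustified.
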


As mentioned above, the solution need not be Lipschitz when $\frac{sp}{p-1}= 1$. However, we may recover Lipschitz regularity by
imposing additional regularity on $f$. More precisely, we suppose that 
\begin{equation}\label{F}
\sup_{t\in (0, 1]} \omega_\Omega(t) |\log(t)|^{n+1+2p(1-s)}<\infty,
\end{equation}
where
$$
 \omega_\Omega(t)=\sup\{|f(x)-f(y)|\; :\; |x-y|\leq 1, \; x, y\in \Omega\}.
$$
\begin{thm}\label{Main2}
Let $\frac{sp}{p-1}= 1$ and $f$ satisfy \eqref{F}. Then every local weak solution 
$u\in W^{s, p}_{\rm loc}(\Omega)\cap L^{p-1}_{sp}(\Rn)$  to \eqref{I01} is locally Lipschitz. Moreover, if $f\in L^\infty_{\rm loc}(\Omega)$, then
$u$ satisfies a {\it log-Lipschitz} estimate, that is, there exists $\uptheta>0$ such that for any $\Omega_1\Subset\Omega$ we have
$$|u(x)-u(y)|\leq C |x-y|(1+|\log|x-y||^\uptheta)\quad \text{for all}\;\; x, y\in \Omega_1,$$
for some constant $C$.
\end{thm}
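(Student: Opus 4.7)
The plan is to prove both estimates via a Campanato-type iteration that exploits the critical scaling identity $sp = p-1$. Fix $x_0 \in \Omega_1 \Subset \Omega$ and, for small $r > 0$, define the excess
\[
E(r) := \inf_{c \in \R} \|u - c\|_{L^\infty(B_r(x_0))}.
\]
The objective is to establish $E(r) \leq C r$ when $f$ is locally H\"older, and $E(r) \leq C r (1 + |\log r|^\uptheta)$ when $f$ is merely $L^\infty_{\rm loc}$. As a preliminary step, I would adapt the proof of Theorem~\ref{Main} to the borderline regime $sp/(p-1) = 1$ to secure $u \in C^{0,\gamma}_{\rm loc}(\Omega)$ for every $\gamma < 1$, with quantitative bounds on both the H\"older seminorm and the nonlocal tail; this supplies a starting estimate and, crucially, uniform tail control for the rescaled functions introduced below.

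For the H\"older-$f$ case, at each dyadic scale $r_k = 2^{-k} R$ I would choose $c_k$ minimizing $E(r_k)$ and set $u_k(x) := (u(x_0 + r_k x) - c_k)/r_k$. Since $sp = p-1$, the equation becomes $(-\Delta_p)^s u_k(x) = f(x_0 + r_k x)$ on $B_1$, and writing $f = f(x_0) + g_k$ with $\|g_k\|_{L^\infty(B_1)} \leq [f]_{C^\alpha} r_k^\alpha$, the heart of the argument is the one-step flatness improvement
\[
E(r_{k+1}) \leq \lambda\, E(r_k) + C\, r_k^{1+\alpha}, \qquad \lambda \in (0, 1).
\]
I would establish this either by a compactness/contradiction argument---extracting a limit of the rescaled sequence $u_k$, which solves a frozen constant-right-hand-side problem $(-\Delta_p)^s v = f(x_0)$ on $B_1$ and is therefore smoother by interior H\"older theory, forcing the improvement by contradiction---or directly via Caccioppoli-type energy estimates on $u_k$ in the spirit of Brasco--Lindgren--Schikorra and Garain--Lindgren. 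Summing the resulting geometric iteration then yields $E(r_k) \leq C r_k$ uniformly in $x_0$, which is local Lipschitz continuity.

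For the log-Lipschitz case under $f \in L^\infty_{\rm loc}$ the same scheme applies, but the inhomogeneity no longer affords a power gain; one is forced into an inequality of the type
\[
E(r_{k+1}) \leq \lambda\, E(r_k) + C\, r_k + (\text{tail corrections}),
\]
which, upon summation through $k$ dyadic levels, gives $E(r_k) \lesssim k\, r_k \approx r_k |\log r_k|$. The exponent $\uptheta$ accommodates further logarithmic losses produced when the rescaled nonlocal tails are iteratively estimated: in the critical regime the tails are only bounded up to a $\log$-factor, and this loss compounds across scales.

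The main obstacle will be the one-step improvement of flatness for the nonlinear nonlocal operator. Unlike the linear fractional Laplacian, one cannot simply subtract a solution of the frozen constant-RHS problem, so the compactness approach requires passing to the limit in a nonlinear nonlocal equation along the rescaled sequence; this demands strong enough convergence (supplied by the preliminary $C^{0,\gamma}$ estimates) together with uniform control on the rescaled tails. Managing these tails in the critical regime, where rescaling affords no decay in the tail term, is the most delicate aspect of the argument and is precisely what forces the logarithmic correction in the $L^\infty$ case.
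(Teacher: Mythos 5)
Your strategy (Campanato-type improvement of flatness on dyadic scales, exploiting the scale invariance $sp=p-1$) is entirely different from the paper's, which runs an Ishii--Lions doubling-of-variables argument with the log-Lipschitz profile $\varphi(t)=t+\frac{t}{\log t}$: there the concavity $\varphi''(t)\asymp -(t\log^2 t)^{-1}$ produces a good term from the cone integral $I_1$ of order $L^{p-1}(\log^2|\bar a|)^{-\beta}$, the H\"older continuity of $f$ enters only through $f(\bar x)-f(\bar y)\leq |\bar a|^\alpha$ after subtracting the two viscosity inequalities, and the log-Lipschitz modulus in the $L^\infty$ case comes from optimizing the resulting bound $|\bar a|\leq e^{-\vartheta_1L^{\vartheta_2}}$ over $L$. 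Unfortunately your scheme as written has two genuine gaps. First, the one-step improvement $E(r_{k+1})\leq\lambda E(r_k)+Cr_k^{1+\alpha}$ with $\lambda\leq\tfrac12$ cannot be obtained by compactness when the excess is measured against \emph{constants}: for the limit profile $v(x)=x_1$ one has $\inf_c\|v-c\|_{L^\infty(B_{1/2})}=\tfrac12\inf_c\|v-c\|_{L^\infty(B_1)}$ exactly, so the contradiction argument only yields $\lambda=\tfrac12+\epsilon$, which sums to $E(r_k)\lesssim r_k^{1-\delta}$ --- information you already have. Closing the iteration requires subtracting affine functions, and then the limit (constant right-hand side, critical exponent) problem must be $C^{1,\epsilon}$; the paper explicitly states that even $C^{1,\alpha}$ regularity of fractional $p$-harmonic functions for $s>\frac{p-1}{p}$ is open, and nothing better than $C^{0,\gamma}$ for every $\gamma<1$ is available at $s=\frac{p-1}{p}$.

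Second, the rescaled tails do not stay bounded, which breaks the extraction of a limit equation even in the H\"older-$f$ case (you flag this only for the $L^\infty$ case). Since $sp=p-1$, one computes
\begin{equation*}
\int_{|z|\geq 1}\frac{|u_k(z)|^{p-1}}{|z|^{n+sp}}\,dz=\int_{|y-x_0|\geq r_k}\frac{|u(y)-c_k|^{p-1}}{|y-x_0|^{n+sp}}\,dy,
\end{equation*}
and with only the a priori bound $u\in C^{0,\gamma}$, $\gamma<1$, the contribution from $r_k\leq|y-x_0|\leq R$ is of order $r_k^{-(1-\gamma)(p-1)}\to\infty$; even under the inductive hypothesis $E(r_j)\leq Cr_j$ it is of order $\log(1/r_k)$. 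So the sequence $(u_k)$ does not lie in a fixed tail space, no frozen problem $(-\Delta_p)^s v=f(x_0)$ on $B_1$ can be identified as the limit, and the compactness step collapses. Because the operator is nonlinear, one cannot repair this by the usual trick of subtracting a corrector solving the frozen problem. These two obstructions are precisely what the paper's pointwise Ishii--Lions argument avoids: it never rescales, the far-field term $I_4$ is estimated once and for all at the original scale (Lemma \ref{lemI4critical}), and the borderline linear decay is absorbed by the logarithmic correction built into the test profile rather than by an improvement-of-flatness dichotomy.
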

For $p=2$, an analogous result was established in \cite{CGT22,Sil12}. We note that the Ishii-Lions type argument in \cite{CGT22} relies on a certain scaling technique, which requires the underlying domain to be
 $\Rn$. However, we do not have the freedom to impose this condition in our setting.

\subsection{A brief history and our novelty}
The regularity of weak solutions to non-local equations involving the fractional $p$-Laplacian has become a prominent area of research in recent years. Early studies in this field focused primarily on local boundedness and H\"{o}lder regularity for small exponents, drawing on the De Giorgi-Nash-Moser theory
 (see for instance, \cite{APT24,BP16,Coz17,DKP16,KKP16}). H\"{o}lder regularity for viscosity solutions was explored by  \cite{DFP19,Lin16}, extending the ideas presented in \cite{Sil06}. Fine zero-order regularity estimates were addressed in \cite{KMS15a,KMS15b},
while global and boundary regularity are discussed in \cite{IMS16,IMS20,IM24,KLL23,KKP16}.

A major breakthrough in higher fractional differentiability for $p\geq 2$ appeared in the studies by Brasco and Lindgren \cite{BL17}
 which  was followed by the work of Brasco, Lindgren and Schikorra \cite{BLS18} who demonstrated optimal higher H\"{o}lder regularity of solutions for $p\geq 2$.
Specifically, they showed that any local weak solution to \eqref{I01} is almost $\Gamma$-H\"{o}lder. 
This marked the beginning of a wave of studies focused on the regularity of such solutions, including 
\cite{BDLM25,BDLMS24a,BDLMS24b,DKLN,DN25,GL24}, and references therein. It is also worth mentioning the recent work by 
B\"{o}gelein et al. \cite{BDLMS24a} where the H\"{o}lder exponent is improved, for $p\geq 2$, to any value strictly less than $\{1, \frac{sp}{p-2}\}$ for the fractional $p$-harmonic functions.
In case of $1<p<2$, it is also shown in \cite{BDLMS24b} that any locally bounded fractional $p$-harmonic function is almost Lipschitz for all $s\in (0, 1)$. This was then further 
extended in \cite{BS25} where the authors prove that fractional $p$-harmonic function is locally Lipschitz if $sp>(p-2)_+$.

Most of the works mentioned above rely on the method of difference quotients, particularly in the context of fractional $(s, p)$-Laplace. This approach, specifically for the fractional $p$-Laplacian, was first introduced by Brasco and Lindgren \cite{BL17}. 
Since the H\"{o}lder regularity in the above work follows as a consequence of Morrey-type embedding, they fail to address the H\"{o}lder regularity at the exponent $\Gamma$. In this article, we take a completely
different approach, which is based on the viscosity solutions. This shift in methodology necessitates considering $f\in L^\infty_{\rm loc}(\Omega)$, leading to $\Gamma=\gamma_\circ=\min\{1, \frac{sp}{p-1}\}$. Thanks to the results in \cite{KKL19}, the equivalence between weak solutions and viscosity solutions ensures that our findings also apply to weak solutions. 
We employ an Ishii-Lions type argument, originally introduced by Ishii and Lions in \cite{IL90}, to establish H\"{o}lder regularity for viscosity solutions of nondegenerate elliptic second-order equations. In the same second-order framework, Capuzzo-Dolcetta, Leoni, and Porretta \cite{CDLP10} obtained interior Lipschitz estimates by revisiting Bernstein’s technique and integrating elements of the Ishii-Lions method (see also Barles \cite{Bar91}). Recently, this approach has been successfully extended to nonlocal operators to establish H\"{o}lder and Lipschitz regularity,
 as demonstrated in \cite{BCI11,BCCI12,BT24}. Notably, this method can also be adapted to investigate Liouville-type properties in the presence of gradient nonlinearity \cite{BQT25}. It is important to note that in all previous works, the nonlocal operator was either linear or of the Pucci type (that is, sub-additive), a structure that facilitated certain cancellations, see \cite[Lemma~23]{BCCI12}, \cite[Lemma~2.4]{BQT25}.
However, this structural advantage does not apply to the fractional $p$-Laplacian. To address this challenge, we approach the problem with great care, enabling us to successfully apply the Ishii-Lions argument (refer to the estimate of $I_3$). 
While it is possible to offer a slightly simpler proof of Lipschitz regularity using the established H\"{o}lder regularity results from \cite{BLS18,GL24}, we present self-contained proofs by employing an appropriate bootstrapping argument, assuming the solution to be continuous. Since the regularity results in \cite{Coz17,KMS15a} ensure continuity of local weak solutions, 
our technique is also applicable to weak solutions.
This choice enhances the applicability of our method to other models.
Another notable aspect of this method is its versatility: it can be extended to a broad range of similar operators. As discussed in Section ~\ref{S-other}, our approach applies to the fractional $(p,q)$-Laplacian and fractional double-phase problems. Notably, our estimate provides significantly sharper H\"{o}lder estimates compared to those previously available in the literature  (see Theorem \ref{T2.10} and \ref{T2.11}). 
We also believe that similar regularity for $f\in L^q_{\rm loc}(\Omega)$ can be established using the perturbation method, but we do not pursue this direction in this article.
Since the Ishii-Lions method provides a first-order approximation, our approach only yields Lipschitz regularity and may not be sufficient to prove 
$C^{1,\alpha}$ regularity.  However, considering the  $C^{1, \alpha}$ estimate for the classical $p$-Laplace operator (see \cite{U77,U68}), it
is natural to expect $C^{1,\alpha}$ regularity for the fractional $p$-harmonic function when $s>\frac{p-1}{p}$, a result that has remained unknown to date.

\subsection{General setting and notion of solution.}

Let $K:\Rn\setminus\{0\}\to (0, \infty)$ be a symmetric, continuous function satisfying
$$\frac{\lambda}{|z|^{n+ps}}\leq K(z)\leq \frac{\Lambda}{|z|^{n+ps}}\quad \text{for}\; z\neq 0,$$
where $0<\lambda\leq \Lambda$, $p>1$ and $s\in (0, 1)$. We are interested in
the operator
$$
\sL u(x):= {\rm PV}\int_{\Rn} |u(x)-u(y)|^{p-2}(u(x)-u(y)) K(x-y) \dy.
$$
The case $K(z)=|z|^{-n-ps}$ corresponds to the fractional $p$-Laplace operator.
Also note that we do not use the factor $(1-s)$ multiplied with the operator since our results are not stable as $s\nearrow 1$ (see Remark~\ref{Rem2.1}). Since we will be dealing with the viscosity solutions to
the above operator, it is necessary to define the appropriate notion of viscosity solution for $\sL$. To do so, we recall a few notation from 
\cite{KKL19}. By $L^{p-1}_{sp}(\Rn)$ we denote the \textit{tail space}  defined by
$$L^{p-1}_{sp}(\Rn)=\left\{f\in L^{p-1}_{\rm loc}(\Rn)\; :\; \int_{\Rn}\frac{|f(z)|^{p-1}}{(1+|z|)^{n+sp}}\dz <\infty\right\}.$$
Associated to this tail space we also define the {\it tail function} given by
$$
\tail(f; x, r) := \left(r^{sp} \int_{|z-x|\geq r}\frac{|f(z)|^{p-1}}{|z-x|^{n+sp}}\dz \right)^{\frac{1}{p-1}}\quad r>0.
$$
Given an open set $D$, we denote by $C^2_\beta(D)$, a subset of $C^2(D)$, defined as
$$
C^2_\beta(D)=\left\{\phi\in C^2(D)\; :\; \sup_{x\in D}\left[\frac{\min\{d_\phi(x), 1\}^{\beta-1}}{|\nabla\phi(x)|} +
\frac{|D^2\phi(x)|}{(d_\phi(x))^{\beta-2}}\right]<\infty\right\},
$$
where
$$ 
d_\phi(x)=\dist(x, N_\phi)\quad \text{and}\quad N_\phi=\{x\in D\; :\; \nabla\phi(x)=0\}.$$

The above restricted class of test functions becomes necessary to define $\sL$ in the classical sense in the singular case, that is, for
$p\leq \frac{2}{2-s}$. Now we are ready to define the viscosity solution from \cite[Definition~3]{KKL19}.

\begin{defi}\label{Def1.1}
A function $u:\Rn\to \R$ is a viscosity subsolution to $\sL u =f $ in $\Omega$ if it satisfies the following
\begin{itemize}
\item[(i)] $u$ is upper semicontinuous in $\bar\Omega$.
\item[(ii)] If $\phi\in C^2(B_r(x_0))$ for some $B_r(x_0)\subset \Omega$ satisfies $\phi(x_0)=u(x_0)$,
$\phi\geq u$ in $B_r(x_0)$ and one of the following holds
\begin{itemize}
\item[(a)] $p>\frac{2}{2-s}$ or $\nabla\phi(x_0)\neq 0$,

\item[(b)] $1<p\leq \frac{2}{2-s}$, $\nabla\phi(x_0)= 0$ is such that $x_0$ is an isolated critical point of $\phi$, and
$\phi\in C^2_\beta(B_r(x_0))$ for some $\beta>\frac{sp}{p-1}$,
\end{itemize}
then we have $\sL \phi_r(x_0)\leq f(x_0)$ where
\[
\phi_r(x)=\left\{\begin{array}{ll}
\phi(x) & \text{for}\; x\in B_r(x_0),
\\[2mm]
u(x) & \text{otherwise}.
\end{array}
\right.
\]

\item[(iii)] $u_+\in L^{p-1}_{sp}(\Rn)$.
\end{itemize}
We say $u$ is a viscosity supersolution in $\Omega$, if $-u$ is a viscosity subsolution in $\Omega$. Furthermore, a 
viscosity solution to $\sL u= f$ in $\Omega$ is both sub and super solution in $\Omega$. 
\end{defi}

It is clear from the definition that it is equivalent to general testing, in the sense that $x_0$ is a local maximum point for $u - \phi$ (and not necessarily $u(x_0) = \phi(x_0)$). Same comment goes for the testing of supersolutions.

\medskip

We also need the notion of weak solution for $\sL$, and for this, we need the space
$$
W^{s, p}(\Omega)=\left\{ v\in L^p(\Omega)\; :\; \int_\Omega\int_\Omega \frac{|v(x)-v(y)|^p}{|x-y|^{n+sp}}\dx\dy<\infty \right\},
$$
and the corresponding local fractional Sobolev space, defined by
$$
W^{s, p}_{\rm loc}(\Omega)=\{ v\in L^p_{\rm loc}(\Omega)\; :\;  v\in W^{s, p}(\tilde\Omega)\; \text{for any}\; \tilde\Omega\Subset\Omega \}.
$$
We also denote 
$$
J_p(t)= |t|^{p-2} t, \quad t\in\R.
$$
\begin{defi}
A function $u\in W^{s, p}_{\rm loc}(\Omega)\cap L^{p-1}_{sp}(\Rn)$ is said to be subsolution to $\sL u =f$ in $\Omega$ if we have
$$ \frac{1}{2}\int_{\Rn}\int_{\Rn} J_p(u(x)-u(y))(\phi(x)-\phi(y))K(x-y)\dx\dy\leq \int_{\Rn} f(x) \phi(x) \dx,$$
for all non-negative $\phi\in C^\infty_0(\Omega)$.
\end{defi}
Analogously, we can also define weak super-solution and solution. Equivalence between the weak supersolution and the viscosity supersolution are established in
\cite{KKL19}. Here we add a proof for convenience. The continuity of the kernel $K$ is used for this result. 

\begin{prop}\label{Prop1.3}
Let $\Omega$ be an open set and $f\in C(\Omega)$. Let $u \in W^{s, p}_{\rm loc}(\Omega)\cap L^{p-1}_{sp}(\Rn)$ be a weak 
subsolution to $\sL u=f$ in $\Omega$. Furthermore, assume that $u$ is upper-semicontinuous in $\bar\Omega$. Then $u$ is a viscosity subsolution
to $\sL u=f$ in $\Omega$. An analogous conclusion holds for weak supersolution and solution.
\end{prop}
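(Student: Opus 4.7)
The plan is to argue by contradiction, juggling a pointwise strict inequality (supplied by the would-be viscosity contact) against the integrated weak inequality. I would first assume $u$ fails to be a viscosity subsolution: there exist $x_0\in\Omega$ and a test function $\phi\in C^2(B_r(x_0))$ satisfying (a) or (b) of Definition~\ref{Def1.1}, with $\phi\geq u$ on $B_r(x_0)$, $\phi(x_0)=u(x_0)$, and $\sL\phi_r(x_0)-f(x_0)=:3\delta>0$. By a standard higher-order perturbation of $\phi$ at $x_0$ (preserving both the $C^2_\beta$ class in case (b) and the values of $\nabla\phi,D^2\phi$ at $x_0$ up to arbitrarily small error), I may assume $\phi>u$ strictly on $B_r(x_0)\setminus\{x_0\}$ while keeping $\sL\phi_r(x_0)-f(x_0)\geq 2\delta$. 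Continuity of $K$ and $f$ together with the $C^2_\beta$-control of $\phi$ (needed in case (b), where $\nabla\phi(x_0)=0$) then render $x\mapsto\sL\phi_r(x)-f(x)$ continuous at $x_0$, so some $\rho<r/2$ satisfies $\sL\phi_r(x)\geq f(x)+\delta$ for all $x\in B_\rho(x_0)$.

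Next I would introduce the competitor for the weak formulation. For $\eta>0$ small, the open set $V_\eta:=\{x\in B_r(x_0):\phi(x)-u(x)<\eta\}$ is a neighborhood of $x_0$ compactly contained in $B_\rho(x_0)$, by the strict separation and upper semicontinuity of $u$. Setting $\psi:=(u-\phi+\eta)_+\mathbf{1}_{V_\eta}$, I obtain a nonnegative function bounded by $\eta$, compactly supported in $V_\eta$, and lying in $W^{s,p}_0(\Omega)$; by density it is admissible in the weak inequality. Testing the weak inequality for $u$ against $\psi$ gives
$$\frac{1-s}{2}\iint J_p(u(x)-u(y))(\psi(x)-\psi(y))K(x-y)\dx\dy\leq\int f\psi\dx,\qquad(\star)$$
while multiplying $\sL\phi_r\geq f+\delta$ on $V_\eta$ by $\psi\geq 0$, integrating, and applying Fubini together with the symmetry of $K$ produces
$$\frac{1-s}{2}\iint J_p(\phi_r(x)-\phi_r(y))(\psi(x)-\psi(y))K(x-y)\dx\dy\geq\int f\psi\dx+\delta\int\psi\dx.\qquad(\star\star)$$
Subtracting $(\star\star)$ from $(\star)$ yields the central inequality
$$\frac{1-s}{2}\iint\bigl[J_p(u(x)-u(y))-J_p(\phi_r(x)-\phi_r(y))\bigr]\bigl(\psi(x)-\psi(y)\bigr)K\dx\dy\leq-\delta\int\psi\dx.\qquad(\dagger)$$

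To reach a contradiction I would show that the integrand in $(\dagger)$ is pointwise nonnegative except on a small tail region. Writing $a:=u(x)-u(y)$, $b:=\phi_r(x)-\phi_r(y)$, $c:=\psi(x)-\psi(y)$, I would partition $\Rn\times\Rn$ according to where each of $x,y$ lies (in $V_\eta$, in $B_r\setminus V_\eta$, or in $\Rn\setminus B_r$). The region $V_\eta^c\times V_\eta^c$ contributes nothing since $\psi\equiv 0$ there. On $V_\eta\times V_\eta$, the identity $u-\phi_r=\psi-\eta$ forces $a-b=c$, so the monotonicity of $J_p$ makes the integrand pointwise nonnegative. On $V_\eta\times(B_r\setminus V_\eta)$ one has $(u-\phi_r)(x)>-\eta\geq(u-\phi_r)(y)$, hence $a>b$ while $c=\psi(x)>0$, again giving a nonnegative integrand. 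The only obstructive region is $V_\eta\times(\Rn\setminus B_r)$, where $a-b=\psi(x)-\eta\in[-\eta,0]$ has the wrong sign; but the mean-value estimate $|J_p(a)-J_p(b)|\leq C\max(|a|,|b|)^{p-2}|a-b|$ for $p\geq 2$ (respectively $|J_p(a)-J_p(b)|\leq C|a-b|^{p-1}$ for $1<p<2$), combined with the tail condition $u\in L^{p-1}_{sp}(\Rn)$, bounds the absolute value of this contribution by $C(\eta+\eta^{p-1})\int\psi\dx$. Choosing $\eta$ small enough that this error is strictly less than $\tfrac{\delta}{2}\int\psi\dx$ contradicts $(\dagger)$. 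The main obstacle I anticipate is making the propagation of the strict pointwise inequality from $x_0$ to $B_\rho(x_0)$ rigorous in the singular case (b): there one must verify continuity of $x\mapsto\sL\phi_r(x)$ at an isolated critical point of $\phi$, which is precisely what the $C^2_\beta$ hypothesis and the assumed continuity of $K$ are designed to enable.
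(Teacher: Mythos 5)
Your argument is correct in substance but follows a genuinely different route from the paper. The paper perturbs the test function downward, $\phi_\delta=\phi-\delta\chi$ (via \cite[Lemma~3.9]{KKL19}), checks that $\phi_\delta$ is still a strict classical, hence weak, supersolution near $x_0$, and then invokes the comparison principle of \cite[Lemma~6]{KKP17} between the weak subsolution $u$ and $\phi_\delta$, reaching a contradiction at $x_0$. You instead keep $\phi$ (after making the touching strict), test the weak inequality directly with $\psi=(u-\phi+\eta)_+$, and reprove the relevant piece of the comparison principle by hand through the sign analysis of $[J_p(a)-J_p(b)](\psi(x)-\psi(y))$ region by region, estimating the only bad region $V_\eta\times B_r^c$ by the tail integrability. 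This buys a self-contained proof that does not cite the comparison principle; the cost is that you must redo the monotonicity bookkeeping that the comparison principle packages, and you still lean on the same two inputs the paper cites, namely continuity of $x\mapsto\sL\phi_r(x)$ and the identity behind $(\star\star)$ (the symmetrization/Fubini step for the principal value), which is exactly the content of \cite[Lemma~3.10]{KKL19} and is nontrivial in the singular case.

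Three points need tightening. First, $V_\eta$ as you define it (relative to all of $B_r(x_0)$) need not be compactly contained in $B_\rho(x_0)$, since $\phi-u$ has no uniform lower bound near $\partial B_r(x_0)$; you should define $V_\eta$ inside $B_\rho(x_0)$ and choose $\eta$ below the (positive) minimum of the lower semicontinuous function $\phi-u$ on $\overline{B_\rho}\setminus B_{\rho/2}$. Moreover $V_\eta$ need not be open (sublevel sets of l.s.c.\ functions are not open), so it is not literally a neighborhood of $x_0$ --- though this is harmless, since all you need is $\supp\psi\Subset B_\rho$ and $\int\psi>0$. Second, that positivity $\int\psi\,dx>0$ is essential to your contradiction and does not follow from upper semicontinuity alone: one must use that $u$ is the precise (essential limsup) representative, so that $u(x_0)=\phi(x_0)$ forces $|\{u>\phi-\eta\}\cap B_\rho|>0$. (The paper's proof relies on the same convention at its final step, so this is a shared, standard assumption, but your argument uses it earlier and more visibly.) Third, the ``standard higher-order perturbation'' making $\phi>u$ strictly must be taken of order $|x-x_0|^{\beta'}$ with $\beta'>\beta$ in case (b), not merely cubic or quartic, to preserve the lower bound $|\nabla\phi|\gtrsim d_\phi^{\beta-1}$ when $\beta>3$; with that choice the step goes through.
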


\begin{proof}
Note that $u$ satisfies conditions (i) and (iii) of Definition~\ref{Def1.1}. Consider a test function $\phi\in C^2(B_r(x_0))$ with $B_r(x_0)\subset \Omega$
such that $\phi(x_0)=u(x_0)$, $\phi\geq u$ in $B_r(x_0)$ and either (a) or (b)  in Definition~\ref{Def1.1}(iii) holds. We need to show that
$\sL\phi_r(x_0)\leq f(x_0)$, where
\[
\phi_r(x)=\left\{\begin{array}{ll}
\phi(x) & \text{for}\; x\in B_r(x_0),
\\[2mm]
u(x) & \text{otherwise}.
\end{array}
\right.
\]
Suppose, to the contrary, that $\sL\phi_r(x_0)> f(x_0)$. Using the continuity of $f$ and $\sL\phi$ in $B_r(x_0)$
(see \cite[Lemma~3.8]{KKL19}), we can find $r_1\in (0, r)$ and
$\epsilon>0$ such that $\sL\phi_r(x)> f(x) + \epsilon$ in $B_{r_1}(x_0)$. By \cite[Lemma~3.9]{KKL19} there exist $r_2\in (0, r_1), \hat\delta>0$, and
a non-negative function $\chi\in C_0^2(B_{r_2/2}(x_0))$ satisfying $\chi(x_0)=1$, $0\leq \chi\leq 1$, such that for $\phi_\delta=\phi_r-\delta\chi$ we have
$$ \sup_{B_{r_2}(x_0)}|\sL \phi_r - \sL \phi_\delta|<\frac{\epsilon}{2}$$
for $\delta\in (0, \hat\delta)$. Furthermore, for $1<p\leq \frac{2}{2-s}$, we will have $\phi_\delta\in C^2_\beta(B_{r_2}(x_0))$ for $\beta>\frac{sp}{p-1}$. Therefore,
$\sL\phi_\delta> f+\epsilon/2$ in $B_{r_2}(x_0)$.
 The proof
of \cite[Lemma~3.10]{KKL19} reveals that $\phi_\delta$ is a weak supersolution to $\sL \phi_\delta=f$ in $B_{r_2}(x_0)$. Since $\phi_\delta\geq u$ in $B^c_{r_2}(x_0)$, from the
comparison principle \cite[Lemma~6]{KKP17} (see also, \cite[Lemma 9]{LL14}), we obtain $\phi_\delta\geq u$ in $\Rn$. But this is not possible since $\phi_\delta(x_0)=\phi(x_0)-\delta<u(x_0)$.
Thus, we must have $\sL\phi_r(x_0)\leq f(x_0)$. Hence the proof.
\end{proof}

The rest of the article is organized as follows. 
In the next section, we introduce  our first main result (Theorem~\ref{Tmain-1}) within the viscosity framework and outline the general strategy behind our proofs.
Section~\ref{secsuper} presents the proof of Theorem~\ref{Tmain-1} in the superquadratic case, while the subquadratic case is addressed in Section~\ref{S-sub}.
In Section~\ref{S-other}, we explore several extensions of our result, including fractional $(p,q)$-Laplacian and
fractional double-phase problems. Finally, in  Section~\ref{S-crit}, we provide the proof of the Lipschitz regularity for the critical case (Theorem~\ref{Main2}).

In what follows, $\kappa, \kappa_1, \kappa_2, \ldots$ denote arbitrary constants that may vary from line to line.

\section{Ishii-Lions methods for fractional $p$-Laplacian.}

We now state the main result Theorem~\ref{Main} in a more precise way, in the viscosity framework.
\begin{thm}\label{Tmain-1}
Let $p\in (1, \infty)$, $s\in (0,1)$ and define $\gamma_\circ(s)=\min\{1, \frac{sp}{p-1}\}$. Let $u\in C(\bar{B}_2)\cap L^{p-1}_{sp}(\Rn)$ be a viscosity solution to
$$ -C \leq \sL u\leq C\quad \text{in}\; B_2.$$
Then $u\in C^{0, \gamma}(B_1)$ for every $\gamma<\gamma_\circ(s)$, and for $\gamma<\gamma_\circ(s)$,
 we have
$$\norm{u}_{C^{0,\gamma}(B_1)}\leq C_1 \left(\sup_{B_2}|u| +  \tail(u; 0, 2) + C^{\frac{1}{p-1}} \right)$$
for some constant $C_1$, dependent only on $\gamma, \lambda, \Lambda, n, p$ and $s$. Furthermore, for $\frac{sp}{p-1}\neq 1$, we have 
$u\in C^{0, \gamma_\circ(s)}(B_1)$
and 
$$\norm{u}_{C^{0,\gamma_\circ(s)}(B_1)}\leq C_2 \left(\sup_{B_2}|u| +  \tail(u; 0, 2) + C^{\frac{1}{p-1}} \right)$$
for some constant $C_2$, dependent only on $\gamma, \lambda, \Lambda, n, s$ and $p$. 
\end{thm}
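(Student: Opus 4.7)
The plan is to prove the estimate via a nonlocal Ishii--Lions doubling argument adapted to the singular/degenerate structure of $\sL$. Fix $x_0 \in B_1$ and $\gamma < \gamma_\circ$. I work with the auxiliary function
\[
\Phi(x,y) = u(x) - u(y) - L\,\omega(|x-y|) - \mu\bigl(|x-x_0|^2 + |y-x_0|^2\bigr),
\]
where $\omega(r) = r^\gamma - \tau r^{2}$ is a strictly concave modulus on a small interval $[0,r_\star]$ (extended constantly thereafter), $L \gg 1$, $\tau \in (0,1)$ is small but fixed, and eventually $\mu \to 0^+$. Arguing by contradiction that $\sup \Phi >0$, the quadratic penalty localizes the maximum point $(\bar x,\bar y)$ in the interior of $B_{3/2} \times B_{3/2}$, forces $\bar x \ne \bar y$, and makes $|\bar x -\bar y|\to 0$ as $L\to\infty$. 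Because $\bar x\ne \bar y$, the one-sided test functions
\[
\phi_1(x) := u(\bar x) + L\,\omega(|x-\bar y|) - L\,\omega(|\bar x-\bar y|) + \mu\bigl(|x-x_0|^2-|\bar x-x_0|^2\bigr),
\]
\[
\phi_2(y) := u(\bar y) - L\,\omega(|\bar x-y|) + L\,\omega(|\bar x-\bar y|) - \mu\bigl(|y-x_0|^2-|\bar y-x_0|^2\bigr)
\]
are $C^{2}$ in a small ball around $\bar x$, $\bar y$ respectively, with nonvanishing gradient of magnitude $\asymp L\,\omega'(|\bar x-\bar y|)$, so alternative (a) of Definition~\ref{Def1.1}(ii) applies and no restriction on $p$ enters at this stage.

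Inserting $\phi_1$, $\phi_2$ into the sub/supersolution inequalities, truncating to $u$ outside the balls of testing, and subtracting, I arrive at
\[
\sL\phi_{1,r}(\bar x) - \sL\phi_{2,r}(\bar y) \le 2C.
\]
I split the integrand in the increment variable $z$ into three annular contributions $I_1+I_2+I_3$: a near-diagonal ball $|z|\le\rho$ where the test functions contribute most, an intermediate shell, and the far field, the latter being controlled by $\sup_{B_2}|u|$ and $\tail(u;0,2)$ in a routine way. The core of the proof is the near-diagonal term $I_1$: rewriting each $J_p$-increment by the fundamental theorem of calculus and exploiting the strict concavity $\omega''(r)\le -\tau$ together with the maximality property
\[
u(\bar x+z)-u(\bar x) - \bigl(u(\bar y+z)-u(\bar y)\bigr) \le L\bigl[\omega(|\bar x-\bar y+z|) + \omega(|\bar x-\bar y-z|) - 2\omega(|\bar x-\bar y|)\bigr] + O(\mu),
\]
yields a strictly negative gain of order $L^{p-1}\,[\omega'(|\bar x-\bar y|)]^{p-2}\,\tau\,|\bar x-\bar y|^{\gamma}$. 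For $p\ge 2$ this follows directly from the convexity of $J_p$; in the singular range $1<p<2$ one uses the quantitative strict monotonicity of $J_p$ in ratio form, which explains the split treatment of the two sections of the paper.

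The main obstacle is the mixed term $I_3$, which quantifies the interaction between the nonlinearity $J_p$ and the $u$-part of the increment. Because $J_p$ is neither additive nor sub-additive, the Pucci-type cancellations used in \cite{BCCI12,BQT25} are unavailable. To bypass this I decompose $J_p(a+b)-J_p(a) = (p-1)\int_0^1 |a+tb|^{p-2}\,dt\,b$ with $a$ being the $\omega$-part and $b$ the $u$-part, so that the $u$-dependence becomes linear and can be bounded pointwise by $2\|u\|_\infty$; the weight is then integrable in $z$ thanks to the dominant size $|a|\asymp L\omega'(|\bar x-\bar y|)|z|$ of the $\omega$-increment. This produces $|I_3|\le \kappa L^{p-2}[\omega'(|\bar x-\bar y|)]^{p-2}\|u\|_\infty$, which is of lower order in $L$ than the gain from $I_1$. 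Choosing $\tau$ fixed, then $L$ large depending on $C^{1/(p-1)}+\|u\|_\infty+\tail(u;0,2)$, and finally sending $\mu\to 0^+$, the inequality $I_1+I_2+I_3\le 2C$ is violated, yielding a contradiction and hence $\Phi\le 0$; this is the $C^{0,\gamma}$ estimate for every $\gamma<\gamma_\circ$, with uniform dependence on $s\ge s_0$ because all constants come from the $p$-convexity inequalities and the uniform $(1-s)$-factor.

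Finally, the endpoint $\gamma=\gamma_\circ$ under $\tfrac{sp}{p-1}\ne 1$ is reached by a bootstrap: once $C^{0,\gamma}$ is known for every $\gamma<\gamma_\circ$, the previously missing quantitative control of $u(\bar x+z)-u(\bar y+z)$ in the estimate of $I_3$ is in hand, and one reruns the doubling argument with $\omega(r)=r^{\gamma_\circ}$ when $\gamma_\circ<1$, or with $\omega(r)=r-\tau r^{1+\eta}$ for small $\eta>0$ when $\gamma_\circ=1$ (which is the case $sp>p-1$, so $\omega$ remains an admissible concave modulus near $0$). The strict inequality $\tfrac{sp}{p-1}\ne 1$ is exactly what ensures a nontrivial homogeneity gap between $I_1$ and $I_3$, so the estimate closes; the proof breaks down precisely at $\tfrac{sp}{p-1}=1$, which is why Theorem~\ref{Main2} requires the extra H\"older assumption on $f$. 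The main foreseen difficulty throughout is the $I_3$ estimate, where the loss of sub-additivity of $J_p$ must be compensated by the integral representation above, and this compensation is the novel technical ingredient.
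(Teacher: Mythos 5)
Your overall architecture (doubling function, contradiction at an interior maximum, splitting of the nonlocal evaluation into a cone/near-diagonal region, an intermediate shell, and a far field, and the representation $J_p(a)-J_p(b)=(p-1)\int_0^1|b+t(a-b)|^{p-2}(a-b)\,dt$ to handle the non-sub-additivity) is the same as the paper's. But your treatment of the intermediate term $I_3$ contains a genuine gap. On the intermediate shell the truncated test functions coincide with $u$ itself, so the increments there are $u(\bar x)-u(\bar x+z)$ and $u(\bar y)-u(\bar y+z)$: there is no ``$\omega$-part'' $a$ with $|a|\asymp L\omega'(|\bar x-\bar y|)|z|$ available to make the weight $\int_0^1|a+tb|^{p-2}dt$ integrable, and your claimed bound $|I_3|\le \kappa L^{p-2}[\omega']^{p-2}\|u\|_\infty$ does not follow. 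What one actually gets from maximality is only the one-sided inequality $\delta^1u(\bar x,z)-\delta^1u(\bar y,z)\ge m_1\,\delta^1\psi(\bar x,z)$, which combined with monotonicity of $J_p$ reduces $I_3$ to an integral of $|\delta^1u|^{p-2}\,|\delta^1\psi(\bar x,z)|\,|z|^{-n-sp}$ over $\delta\le|z|\le\tilde\varrho$. With no information on $u$ beyond boundedness this integral behaves like $\delta^{1-sp}\sim|\bar a|^{1-sp}$ when $sp\ge 1$, and comparing with the cone gain $L^{p-1}|\bar a|^{\gamma(p-1)-sp}$ one needs $L^{p-1}\gtrsim|\bar a|^{\,1-\gamma(p-1)}$; since there is no lower bound on $|\bar a|$ in terms of $L$, the argument closes only for $\gamma\le\frac1{p-1}$. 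This is precisely why the paper runs a bootstrap (Proposition~\ref{T1.7}): an a priori $C^{0,\upkappa}$ bound on $u$ is fed into the weight $|\delta^1u|^{p-2}\lesssim[u]_\upkappa^{p-2}|z|^{\upkappa(p-2)}$ and into $|\nabla\psi(\bar x)|\lesssim|\bar a|^{\upkappa(m-1)/m}$, improving the reachable exponent from $\upkappa$ to anything below $\upkappa+\frac1{p-1}$, and iterating. Your proposal has no such mechanism for the first part of the theorem (you only invoke the known H\"older regularity for the endpoint), so it cannot reach any $\gamma>\frac1{p-1}$ when $sp\ge1$.

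Two secondary points. First, your localization $\mu(|x-x_0|^2+|y-x_0|^2)$ with $\mu\to0^+$ does not localize: for small $\mu$ the penalty is too weak to keep the maximum away from $\partial B_2$. The paper fixes a localization $m_1\psi$ with $m_1$ large once and for all, and the size of $\nabla\psi(\bar x)$ (controlled via $\Phi(\bar x,\bar y)>0$ and the H\"older bound) then enters the $I_3$ estimate in an essential way. Second, your endpoint profile $\omega(r)=r-\tau r^{1+\eta}$ for $\gamma_\circ=1$ is a plausible alternative to the paper's logarithmic profile $t+\frac{t}{\log t}$, but it forces the cone aperture to shrink like $|\bar a|^{\eta}$ (the tangential term $\omega'(|\bar a|)/|\bar a|$ must be dominated by $|\omega''(|\bar a|)|\sim\tau|\bar a|^{\eta-1}$), and you would need to verify that the resulting loss $|\bar a|^{\eta(\frac{n-1}{2}+p-ps)}$ in the cone volume still leaves a divergent gain; this is checkable but is not addressed in your sketch, whereas the paper carries out the analogous logarithmic bookkeeping explicitly.
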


\begin{rem}\label{Rem2.1}
$(1)$ The constants $C_1$ and $C_2$ appearing in Theorem~\ref{Tmain-1} are not stable if we let $s\nearrow 1$, even if we multiply the operator $\sL$ with an additional normalizing factor
$(1-s)$. The problem arises due to the term $\varepsilon^{p(1-s)}_1$ in the lower bound of $I_2$ in Lemma~\ref{lemD1}. Since $s\in (0, 1)$, it helps us  to choose 
$\varepsilon_1$ small enough so that $I_1$ dominates $I_2$ (see \eqref{E1.4}).

\medskip

\noindent
$(2)$ If $ -C \leq \sL u\leq C$ in $B_{2R}$ for some $u\in C(\bar{B}_{2R})\cap L^{p-1}_{sp}(\Rn)$, then letting
$v(x)=u(Rx)$, it is easily seen that 
$$ - R^{sp} C\leq \tilde\sL v\leq R^{sp} C \quad \text{in}\; B_2,$$
in the viscosity sense, where $\tilde\sL$ is defined with respect to the kernel $\tilde{K}(z)=R^{-n-sp}K(Rz)$. By Theorem~\ref{Tmain-1}
we then have
$$\norm{u}_{C^{0,\gamma}(B_R)}\leq \frac{C_1}{R^\gamma} \left(\sup_{B_{2R}}|u| +  \tail(u; 0, 2R) + (R^{sp}C)^{\frac{1}{p-1}} \right).$$
\end{rem}

Proof of Theorem~\ref{Main} follows from Proposition~\ref{Prop1.3} and Theorem~\ref{Tmain-1}.
\begin{proof}[Proof of Theorem~\ref{Main}]
First we observe that $u\in C(\Omega)$. 
For $sp>n$, this follows from the Sobolev-Morrey  embedding \cite{DNPV}, whereas for $sp\leq n$, it follows from the regularity result in
\cite[Theorem~2.4]{Coz17}(see also, \cite[Corollary~1.2]{KMS15a}, \cite[Theorem~3.13]{BP16}).
Hence the proof follows by combining Proposition~\ref{Prop1.3}, Theorem~\ref{Tmain-1} together with a scaling (see Remark~\ref{Rem2.1}) and a standard covering argument.
\end{proof}

In what follows, we concentrate on the proof Theorem~\ref{Tmain-1}.

\subsection{General Strategy.}\label{subsecGS}
In what follows, we explain the general strategy based on Ishii-Lions method in order to conclude Theorem~\ref{Tmain-1}.

Let $M=\sup_{B_2}|u| +  \tail(u; 0, 2) + C^{\frac{1}{p-1}}$ and without loss of any generality we may assume that $M>0$.
Replacing $u$ by $u/M$, it is enough to consider the following situation
\begin{equation}\label{E1.1}
\left \{ \begin{array}{l}
-1\leq \sL u\leq 1 \quad \text{in}\; B_2,
\\[2mm]
\sup_{B_2}|u| +  \tail(u; 0, 2) \leq 1. 
\end{array} \right .
\end{equation}

Fix $1\leq \varrho_1<\varrho_2\leq 2$, and define the doubling
function
\begin{equation}\label{E1.2}
\Phi(x, y)= u(x)-u(y)-L\varphi(|x-y|)- m_1 \psi(x)\quad x, y\in B_2,
\end{equation}
where 
$$
\psi(x) = [(|x|^2-\varrho^2_1)_+]^{m}, \ x \in B_2
$$ 
is a {\it localization} function. We set $m\geq 3$ so that $\psi\in C^2(B_2)$. Though not explicitly depending on $\varrho_2$, this parameter is going to be involved with $\Phi$ through $m_1$ as we will see next.

For $r_0 \in (0,1)$ to be precisely defined in each case, the function $\varphi:[0, r_0) \to [0, \infty)$ is a suitable {\it regularizing} function, encoding the modulus of continuity for $u$. 

For calculations below, we use two types of regularizing function $\varphi$:
\begin{equation}\label{thevarphis}
\begin{split}
& \varphi_\gamma(t)=t^{\gamma}\quad \mbox{with} \ \gamma\in (0, 1) \quad \mbox{(H\"older profile for case $sp < p-1$)}, 
\\
& \tilde\varphi(t)=t+\frac{t}{\log t} \quad  \mbox{(Lipschitz profile for case $sp \geq p-1$)},
\end{split}
\end{equation}
where the last function is defined at zero as $\tilde \varphi(0) = 0$. Notice that the above functions are increasing and concave on a neighborhood of $t = 0$. 

In general, we show that for a sufficiently large $m_1$,
and for all $L$ large enough but independent of $u$, we have $\Phi\leq 0$ in $B_2\times B_2$, which leads to the desired result.

We proceed by contradiction, assuming that $\sup_{B_2\times B_2}\Phi>0$ for all large $L$. Let us choose $m_1$ large enough, dependent on
$\varrho_2, \varrho_1$ and $m$, so that $m_1 \psi(x)>2$ for all $|x|\geq \frac{\varrho_2+\varrho_1}{2}$. Then for all 
$|x|\geq \frac{\varrho_2+\varrho_1}{2}$, we have $\Phi(x, y)<0$ for all $y\in B_2$. Again, since $\varphi$ is strictly increasing in
$[0, 2]$, if we choose $L$ to satisfy $L\varphi(\frac{\varrho_2-\varrho_1}{8})>2$, we obtain $\Phi(x, y)<0$ whenever
$|x-y|\geq \frac{\varrho_2-\varrho_1}{8}$. Thus, there exists $\xbar\in B_{\frac{\varrho_2+\varrho_1}{2}}$ and 
$\ybar\in B_{\frac{5\varrho_2}{8}+\frac{3\varrho_1}{8}}$ such that
\begin{equation}\label{E1.3}
\sup_{B_2\times B_2}\Phi=\Phi(\xbar,\ybar)>0.
\end{equation}
Denote by $\abar=\xbar-\ybar$. From \eqref{E1.3} we have $\abar\neq 0$, and moreover, we have that
$$
L \varphi(|\bar a|) \leq u(\bar x) - u(\bar y) \leq 2,
$$
in view of~\eqref{E1.1}. This implies that $|\bar a|$ gets smaller as $L$ enlarges.
Also, denote 
$$
\phi(x, y) := L\varphi(|x-y|)+ m_1 \psi(x).
$$
Note that
\begin{center}
$x\mapsto u(x) - \phi(x, \ybar)$ has a local maximum point at $\xbar$, and\\[2mm]
$y \mapsto u(y) +\phi(\xbar,y)$ has a local minimum point at $\ybar$.
\end{center}

For $\delta\in (0, \frac{\varrho_2-\varrho_1}{4})$ to be chosen later, we define the following test functions
\[
w_1(z)=\left\{\begin{array}{ll}
\phi(z, \ybar) & \text{if}\; z\in B_\delta(\bar x),
\\[2mm]
u(z) & \text{otherwise},
\end{array}
\right.
\]
 and
\[
w_2(z)=\left\{\begin{array}{ll}
-\phi(\xbar, z) & \text{if}\; z\in B_\delta(\bar y),
\\[2mm]
u(z) & \text{otherwise}.
\end{array}
\right.
\]

The choice of $\delta$, which would depend on $\abar$, will of course ensure that $w_1, w_2$ are $C^2$ around $\bar x, \bar y$ respectively .
An important point here is that, regardless of the choice of $\varphi$ in~\eqref{thevarphis}, for all sufficiently large
$L$ (depending on $m$ and $m_1$), we must have $\nabla_x\phi(\xbar,\ybar)\neq0$ and $\nabla_y\phi(\xbar,\ybar)\neq 0$. Thus, from Definition~\ref{Def1.1}
and \eqref{E1.1}, we obtain
$$
\sL w_1(\bar x)\leq 1\quad \text{and}\quad \sL w_2(\bar y)\geq -1.
$$

Furthermore, as can be seen from \cite{KKL19}, the above principal values are well defined.
We now introduce the notation
\begin{equation}\label{AB01}
\sL[D] w(x)=  {\rm PV}\int_D |w(x)-w(x+z)|^{p-2}(w(x)-w(x+z))K(z)\dz,
\end{equation}
where the Principal Value agrees with the actual integration if $\dist(0, D)>0$. 

Now, define the following domains
$$\cone=\{z\in B_{\delta_0|\abar|}\; :\; |\langle \abar, z\rangle|\geq (1-\delta_0) |\abar||z|\},
\quad \cD_1=B_\delta \cap \cone^c, \quad \text{and}\quad \cD_2=B_{\tilde\varrho}\setminus (\cD_1\cup\cone),$$
where $\delta_0\in (0, \frac{1}{2})$ would be chosen later, $\tilde\varrho=\frac{1}{4}(\varrho_2-\varrho_1)$, and, in general, $\delta << \delta_0 |\bar a| << \tilde \varrho$. See Figure~\ref{figure} .
\begin{figure}[ht]
\begin{center}
\includegraphics[scale=0.3]{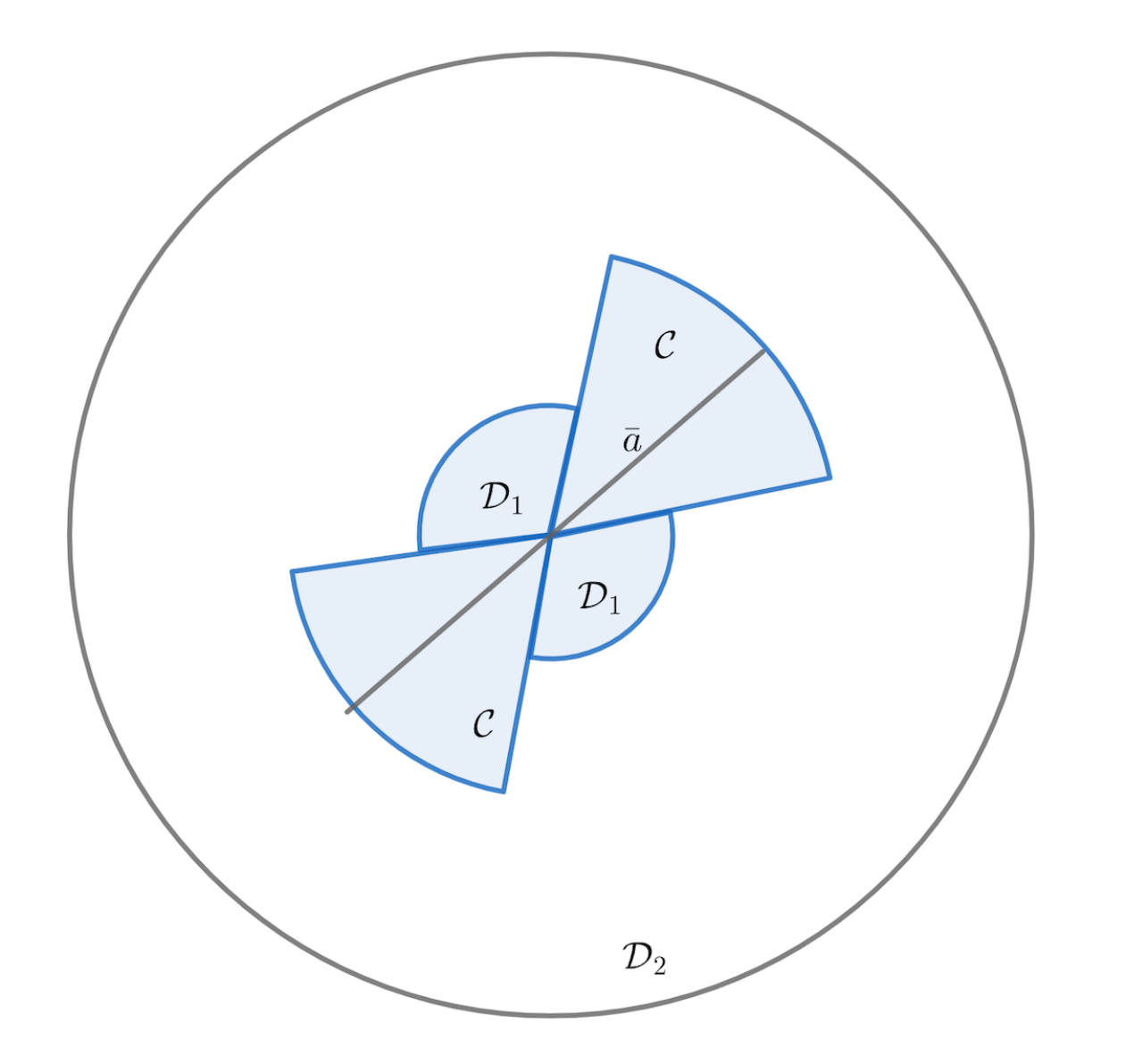}
\end{center}
\caption{Relevant sets for the nonlocal viscosity evaluation.}
\label{figure}
\end{figure} 
Subtracting the viscosity inequalities at $\bar x$ and $\bar y$, we obtain
$$\sL w_1(\bar x)-\sL w_2(\bar y)\leq 2,$$
and using the above notation we write
\begin{align}\label{E1.4}
&\underbrace{\sL[\cone] w_1(\bar x)-\sL[\cone] w_2(\bar y)}_{=I_1} + \underbrace{\sL[\cD_1] w_1(\bar x)-\sL[\cD_1] w_2(\bar y)}_{=I_2} + \nonumber
\\ 
&\quad \underbrace{\sL[\cD_2] w_1(\bar x)-\sL[\cD_2] w_2(\bar y)}_{=I_3}
+ \underbrace{\sL[B^c_{\tilde\varrho}] w_1(\bar x)-\sL[ B^c_{\tilde\varrho}] w_2(\bar y)}_{=I_4}\leq 2.
\end{align}

Our chief goal is to compute $I_i, i=1,2, 3, 4$ suitably and derive a contradiction from \eqref{E1.4} for large enough $L$. For this purpose, we present some technical estimates which will lead us to the desired conclusion.

\subsection{Basic technical estimates.} We start with the following estimates on the cone $\cone$.
\begin{lem}\label{L1.5}
For $0<|\abar|\leq \frac{1}{8}$, consider the cone 
$\cone=\{z\in B_{\delta_0|\abar|}\; :\; |\langle \abar, z\rangle| \geq (1-\delta_0) |\abar||z|\}$, where $\delta_0\in (0, 1)$.
Set 
\begin{equation}\label{defphi}
\phi(x, y)=L\varphi(|x-y|)+ m_1\psi(x),
\end{equation}
and for a differentiable function $f: \R^n \to \R$, $x, z \in \R^n$, denote
\begin{equation}\label{theta2}
\Theta_2 f(x, z) = f(x) - f(x + z) + \nabla f(x) \cdot z.
\end{equation}

Then
\begin{itemize}
\item[(i)] For $\varphi(t)=\varphi_\gamma(t)=t^\gamma$, $\gamma\in (0, 1)$, there exist $L_0, \varepsilon > 0$, independent of $\abar$, such that for all $\delta_0\in (0, \varepsilon]$ and all $L\geq L_0$ we have
$$ \frac{1}{\kappa} L |\abar|^{\gamma-2} |z|^2 \leq \Theta_2 \phi(\cdot,\ybar)(\bar x, z)
\leq \kappa L |\abar|^{\gamma-2} |z|^2,\qquad z\in\cone
$$
for some constant $\kappa$ depending on $\gamma$. 

\item[(ii)] Denote 
\begin{align*}
	\tilde \varphi(t) = \left \{ \begin{array}{cl} t + \frac{t}{\log t} \quad & \mbox{if} \ t \in (0, r_o), \\ 0 \quad & \mbox{if} \ t = 0, \end{array} \right .
	\end{align*}
	for some $r_\circ \in (0,1)$ small enough. 
    For $\varphi(t) = \tilde \varphi(\frac{r_\circ}{3}t)$,  there exist $L_0, \varepsilon > 0$, independent of $\abar$,
 such that for $\delta_0=\delta_1(\log^2|\abar|)^{-1}$ and 
 $\delta_1\in (0, \varepsilon]$, $L \geq L_0$,
we have
	\begin{equation*}
		\frac{1}{\kappa} L (|\abar|\log^2(|\abar|))^{-1} |z|^2 \leq \Theta_2 \phi(\cdot, \bar y)(\bar x, z)
		\leq \kappa L (|\abar|\log^2(|\abar|))^{-1} |z|^2,\qquad z\in\cone,
	\end{equation*}
where $\kappa$ depends on $r_\circ$.
	
\item[(iii)] Same estimates hold for $\Theta_2 \phi(\bar x, \cdot)(\bar y, z)$ in all the cases above.

\end{itemize}
\end{lem}

\begin{proof}
We start with $(i)$. Using Taylor's expansion, we note that
\begin{equation}\label{EL1.5A}
 -\frac{1}{2}\sup_{|t|\leq 1}\langle z,  D^2_x \phi(\xbar+tz,\ybar) z\rangle \leq \phi(\xbar,\ybar)-\phi(\xbar+z,\ybar) + \nabla_x\phi(\xbar,\ybar)\cdot z\leq -\frac{1}{2}\inf_{|t|\leq 1} \langle z, D^2_x \phi(\xbar+tz,\ybar) z\rangle.
\end{equation}

A straightforward calculation reveals
\begin{align*}
\langle z, D^2_x \phi(\xbar+tz,\ybar) z\rangle &= L\left\{ \frac{\varphi'(|\abar + tz|)}{|\abar+tz|}(|z|^2-\langle\widehat{\abar+tz}, z\rangle^2) + \varphi^{\prime\prime}(|\abar+tz|)\langle\widehat{\abar+tz}, z\rangle^2 \right\}
\\
&\quad + m_1 \langle z, D^2_x \psi(\xbar+tz,\ybar) z\rangle.
\end{align*}

Here and in what follows, $\hat{e}$ denotes the unit vector in the direction of $e\neq 0$.
For $z\in\cone$ and $|t|\leq 1$ we see that
$$| \langle\abar+tz, z\rangle|\geq (1-2\delta_0)|\abar||z|,$$
and
$$
(1-\delta_0) |\abar|\leq |\abar+tz|\leq (1+\delta_0)|\abar|.
$$
We choose $\varepsilon$ small enough so that for $\delta_0\in (0, \varepsilon]$ we have
$$
\eta:=\frac{1-2\delta_0}{1+\delta_0}\geq \frac{1}{\sqrt{2}}\quad \text{and}\quad \frac{1-\eta^2}{1-\delta_0}\leq 12\delta_0.
$$

With this choice, for $z\in\cone$,  we have
$$
\langle\widehat{\abar+tz}, z\rangle^2\geq \eta^2 |z|^2 \; \Rightarrow\quad
\frac{1}{|\abar + t z|} (|z|^2-\langle\widehat{\abar+tz}, z\rangle^2)\leq \frac{(1-\eta^2)|z|^2}{(1-\delta_0)|\abar|}\leq 12\delta_0 \frac{|z|^2}{|\abar|}.
$$

Note that this choice of $\varepsilon$ is independent of $L$ and $|\abar|$. Since $\varphi'_\gamma>0$ and $\varphi^{\prime\prime}_\gamma\leq 0$ in $(0, 2]$, we obtain
for $z\in\cone$ that
\begin{align}\label{EL1.5B}
&\left[ -L\frac{\gamma(1-\gamma)}{(1-\delta_0)^{2-\gamma}}|\abar|^{\gamma-2} -m_1 \sup_{B_2}|D^2\psi| \right] |z|^2\nonumber
\\
&\quad \leq \langle z, D^2_x \phi(\xbar+tz,\ybar) z\rangle \nonumber
\\
&\qquad\leq \left[12L\gamma\frac{\delta_0}{(1-\delta_0)^{1-\gamma}} |\abar|^{\gamma-2} -L\frac{\gamma(1-\gamma)}{2(1+\delta_0)^{2-\gamma}}
|\abar|^{\gamma-2} + m_1 \sup_{B_2}|D^2\psi| \right]|z|^2,
\end{align}
where for the lower bound we used $(|z|^2-\langle\widehat{\abar+tz}, z\rangle^2)\geq 0$. Now for fixed $\gamma\in (0, 1)$, we can choose $\varepsilon$ further small, if required, depending on $\gamma$, and
$L_0=L_0(\gamma, \varepsilon, m_1)$ so that
$$4 \gamma\frac{3\delta_0}{(1-\delta_0)^{1-\gamma}}- \frac{\gamma(1-\gamma)}{2(1+\delta_0)^{2-\gamma}} + \frac{|\abar|^{2-\gamma}}{L}m_1 \sup_{B_2}|D^2\psi|
< -\frac{\gamma(1-\gamma)}{4 2^{2-\gamma}},$$
and
$$-\frac{\gamma(1-\gamma)}{(1-\delta_0)^{2-\gamma}}- \frac{|\abar|^{2-\gamma}}{L} m_1 \sup_{B_2}|D^2\psi|\geq -2\gamma (1-\gamma),$$
for all $L\geq L_0$ and $\delta_0\leq \varepsilon$.
Combining \eqref{EL1.5A} and \eqref{EL1.5B} we have the estimate of $(i)$.

\medskip

\noindent
$(ii)$ \, Let $r_\circ>0$ be such that for $r\in (0, r_\circ]$ the following hold:
\begin{align*}
	\frac{r}{2} & \leq \tilde\varphi(r)\leq r,\quad
	\frac{1}{2}  \leq \tilde\varphi'(r)\leq 1,
	\\
	- 2(r\log^2(r))^{-1}&\leq \tilde\varphi^{\prime\prime}(r)\leq - (r\log^2(r))^{-1}.
\end{align*}

Similarly as in (i), we have 
\begin{align*}
&\left[ \frac{Lr_\circ^2}{9}\tilde\varphi^{\prime\prime}(\frac{r_\circ}{3} |\abar+tz|) -m_1 \sup_{B_2}|D^2\psi| \right] |z|^2\nonumber
\\
&\quad \leq \langle z, D^2_x \phi(\xbar+tz,\ybar) z\rangle \nonumber
\\
&\qquad\leq \left[12L\delta_0 |\abar|^{-1} + L\frac{r^2_\circ}{18} \tilde\varphi^{\prime\prime}(\frac{r_\circ}{3} |\abar+tz|) + m_1 \sup_{B_2}|D^2\psi| \right]|z|^2
\end{align*}
for $z\in\cone$. Since $|\abar|\leq \frac{1}{8}$, setting $\varepsilon$ small enough so that $\delta_0\in (0, \frac{1}{2}]$, we obtain
$$ 
2\log(|\abar|)\leq \log((1-\delta_0)|\abar|)\leq \log (|\abar + tz|)\leq \log((1+\delta_0)|\abar|)\leq \frac{1}{2}\log(|\abar|).
$$

Thus, for some constant $\kappa>0$, independent of $\varepsilon, L$, we have
\begin{align*}
&\left[ -L \kappa (|\abar|\log^2(|\abar|))^{-1}  -m_1 \sup_{B_2}|D^2\psi| \right] |z|^2\nonumber
\\
&\quad \leq \langle z, D^2_x \phi(\xbar+tz,\ybar) z\rangle \nonumber
\\
&\qquad\leq \left[12 L \varepsilon  (|\abar|\log^2(|\abar|))^{-1} - \frac{L}{\kappa} (|\abar|\log^2(|\abar|))^{-1} + m_1 \sup_{B_2}|D^2\psi| \right]|z|^2
\end{align*}
for $z\in\cone$. Now, first fixing $\varepsilon$ small enough depending on $\kappa^{-1}$, then choosing $L$ large we get
$$ -\kappa_1 L (|\abar|\log^2(|\abar|))^{-1} \leq \langle z, D^2_x \phi(\xbar+tz,\ybar) z\rangle\leq -\frac{1}{\kappa_1} L (|\abar|\log^2(|\abar|))^{-1}$$
for some $\kappa_1$. 

\smallskip

\noindent
$(iii)$ follows from the calculations in $(i)$ and $(ii)$ using the symmetry of $\varphi$ and considering $\psi=0$.
\end{proof}

We also required the following algebraic estimates from Section 3.1 in~\cite{KKL19}.
\begin{lem}
Suppose $p>1$ and $a, b\in \R$. Then 
\begin{equation}\label{EL1.6A}
\frac{1}{c_p} (|a|+|b|)^{p-2}\leq \int_0^1 |a+tb|^{p-2} dt\leq c_p (|a|+|b|)^{p-2},
\end{equation}
for some constant $c_p$, dependent only on $p$. 
\end{lem}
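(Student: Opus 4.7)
The plan is to normalize by homogeneity and then argue by continuity and compactness on a one-parameter family.

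If $(a,b)=(0,0)$ the inequality is trivial (with the convention $0\cdot\infty = 0$ when $p<2$), so assume $(a,b)\neq (0,0)$ and set $\rho = |a|+|b|>0$. Writing $a = \rho a'$, $b=\rho b'$ with $(a',b') \in S := \{(a,b)\in\R^2\,:\, |a|+|b|=1\}$, both sides scale as $\rho^{p-2}$ and the estimate reduces to proving
$$
\frac{1}{c_p} \leq F(a',b') \leq c_p, \qquad \text{where } F(a,b) := \int_0^1 |a+tb|^{p-2}\, dt,
$$
uniformly over the compact set $S$.

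The key computation is an explicit antiderivative. Since $J_p(u)=|u|^{p-2}u$ is continuous on $\R$ and satisfies $J_p'(u) = (p-1)|u|^{p-2}$ for $u\neq 0$ (with $|u|^{p-2}$ locally integrable because $p>1$), the change of variable $u = a+tb$ gives, for $b\neq 0$,
$$
F(a,b) = \frac{J_p(a+b) - J_p(a)}{(p-1)\, b}.
$$
This displays $F$ as a continuous function on $\{b \neq 0\}$. At points of $S$ with $b=0$ we necessarily have $|a|=1$, so $a$ is bounded away from the singular point of $J_p$, and a first-order Taylor expansion of $J_p$ at $a$ shows that the formula above extends continuously to such points with limit $F(a,0) = |a|^{p-2} = 1$. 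Hence $F$ is continuous on all of $S$.

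Finally, for every $(a,b)\in S$ the integrand $|a+tb|^{p-2}$ is positive a.e.\ and integrable in $t$, so $F>0$ on $S$. Compactness of $S$ then yields a positive minimum $m_p>0$ and a finite maximum $M_p<\infty$, and taking $c_p := \max\{M_p,\, 1/m_p\}$ concludes the proof. The only mildly delicate point is the continuity of $F$ near $b=0$; the constraint $|a|+|b|=1$ keeps $a$ uniformly bounded away from $0$ along $S$, so the singularity of $|u|^{p-2}$ at $u=0$ is never encountered and all estimates are uniform in $p$ alone.
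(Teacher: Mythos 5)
Your proof is correct. Note first that the paper itself does not prove this lemma: it is quoted verbatim as an ``algebraic estimate from Section 3.1 in [KKL19]'', where it is obtained by direct elementary estimation (essentially a case analysis on the relative sizes of $|a|$ and $|b|$, using the explicit antiderivative of $t\mapsto |a+tb|^{p-2}$). Your route is genuinely different in spirit: after reducing by $(p-2)$-homogeneity to the compact set $S=\{|a|+|b|=1\}$, you get both bounds from positivity, finiteness and continuity of $F$ on $S$ via compactness. The key points are all handled correctly: the identity $F(a,b)=\frac{J_p(a+b)-J_p(a)}{(p-1)b}$ is legitimate for all $p>1$ because $J_p$ is absolutely continuous with $J_p'(u)=(p-1)|u|^{p-2}$ locally integrable (this is exactly where $p>1$ enters, making the singularity of $|u|^{p-2}$ integrable when $p<2$), and the constraint $|a|+|b|=1$ forces $|a|=1$ whenever $b=0$, so the extension of $F$ across $\{b=0\}$ only ever involves $J_p$ near the regular points $\pm1$. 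What the soft argument buys is brevity and uniformity of treatment of the cases $p\ge 2$ and $p<2$; what it gives up is any explicit value of $c_p$, which the hands-on computation in the cited source provides. Either way, the statement as used in the paper only needs existence of $c_p=c_p(p)$, so your argument fully suffices.
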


\section{The superquadratic case $p\geq 2$}\label{secsuper}

As we mentioned in our general strategy in subsection~\ref{subsecGS}, the aim is to provide useful estimates for $I_i, i=1,...,4$ in~\eqref{E1.4}
which would lead to a contradiction. Since they differ in the subquadratic and superquadratic cases, we concentrate in this section with the former one.

\subsection{Technical lemmas.} We provide a series of lemmas that will be used to estimate each $I_i, i=1,...,4$.
\begin{lem}[Estimate on $\cone$]\label{lemcone}
Let $p \geq 2$. Assume $(\bar x, \bar y)$ satisfies~\eqref{E1.3}. Then, there exist positive constants $c$ and $L_0$,
dependent on $p, \lambda, m_1, m$ and $\varepsilon$, where $\varepsilon$ is given by Lemma~\ref{L1.5}, such that
\begin{align*}
I_1 \geq & \ c\, (L \varphi'(|\bar a|))^{p-2} \int_{\cone} |z|^{p-2 - n - sp} \Theta_2 \phi(\cdot, \bar y)(\bar x, z) \dz,
\end{align*}
for all $L \geq L_0$.
\end{lem}

\begin{proof}
Note that $I_1$ is comprised of two terms. We only prove the estimate for the term involving $w_1$,  as the argument for the other term involving $w_2$ contributes the same an estimate following a similar argument. This is possible employing the inequality $\Phi(\bar x, \bar y + z) \leq \Phi(\bar x, \bar y)$ and noticing that the contribution of $\psi$ vanishes. 

Denote $\ell(z)=-\nabla_x\phi(\xbar, \ybar)\cdot z$. Since $\cone$ is symmetric (i.e. $z \in \cone$ iff $-z \in \cone$), and by the symmetry of $K$ (i.e. $K(z) = K(-z)$), we have
$$
\sL[\cone] \ell(\bar x) = \int_{\cone} |\ell(\bar x +z) - \ell(\bar x)|^{p-2} (\ell(\bar x + z)-\ell(\bar x)) K(z)dz =0.
$$
Using the inequality $\Phi(\xbar+z, \ybar)\leq \Phi(\xbar,\ybar)$, we get $u(\xbar+z)-u(\xbar)\leq \phi(\xbar+z,\ybar)-\phi(\xbar,\ybar)$, and then combining the
monotonicity of $J_p$ we have
$$
\sL[\cone]w_1(\bar x)\geq \sL[\cone]\phi(\cdot, \ybar)(\bar x).
$$
Since the real function $t \mapsto |t|^{p-2}t$ is absolutely continuous, we see that for all $a,b \in \R$ we have
\begin{equation}\label{intJp}
J_p(a) - J_p(b) = (p-1)\int_{0}^{1} |b + t(a - b)|^{p-2}(a - b)dt.
\end{equation}
Using this with $a=\phi(\xbar,\ybar)-\phi(\xbar+z, \ybar)$ and $b=-\ell(\xbar)+\ell(\xbar+z)=\ell(z)$, we obtain
\begin{align*}
\sL[\cone] w_1(\bar x) &=  \sL[\cone] w_1(\bar x) - \sL[\cone] \ell(\bar x) 
\\
&\geq  \sL[\cone]\phi(\cdot, \ybar)(\bar x)-\sL[\cone] \ell (\bar x) 
\\
&=  (p-1)\int_{\cone} \int_0^1|\ell(z) + t \Theta_2 \phi(\cdot, \bar y)(\bar x, z)|^{p-2} \Theta_2 \phi(\cdot, \bar y)(\bar x, z)dt\, K(z) \dz.
\end{align*}

Using \eqref{EL1.6A} together with the positivity of $\Theta_2 \phi(\cdot, \bar y)(\bar x, z)$ for $z \in \cone$, given for each case in Lemma~\ref{L1.5}, we conclude that
\begin{equation*}
\sL[\cone] w_1(\bar x) \geq \lambda\frac{(p-1)}{c_p} \int_{\cone}|\ell(z)|^{p-2}\Theta_2 \phi(\cdot, \bar y)(\bar x, z) \frac{\dz}{|z|^{n+sp}}.
\end{equation*}
Now, we see that
\begin{align*}
\nabla_x \phi(\bar x, \bar y) = L\varphi'(|\bar a|) \hat{\bar a} + m_1 \nabla \psi(\bar x).
\end{align*}
For $z \in \cone$, we also have $|\hat{\bar a} \cdot z| \geq (1 -  \delta_0)|z|$. Therefore
$$
|\ell(z)| \geq (L \varphi'(|\bar a|) (1 - \delta_0) -  m_1 \sup_{B_2}|\nabla \psi|)|z| \quad \mbox{for} \ z \in \cone.
$$
Since $\varphi'$ is away from zero for $t$ close to $0$, taking $L$ large enough we arrive at
$$
|\ell(z)| \geq \frac{L}{2} \varphi'(|\bar a|) |z| \quad \mbox{for} \ z \in \cone.
$$
Replacing this in the last estimate of $\sL[\cone] w_1(\bar x)$, we conclude that
\begin{align*}
\sL[\cone] w_1(\bar x) \geq \kappa (L \varphi'(|\bar a|))^{p-2} \int_{\cone} |z|^{p-2 - n - sp} \Theta_2 \phi(\cdot, \bar y)(\bar x, z) \dz. 
\end{align*}

An upper bound for $\sL[\cone] w_2(\bar y)$ is obtained in a similar fashion, namely
$$
\sL[\cone] w_2(\bar y) \leq -\kappa (L \varphi'(|\bar a|))^{p-2} \int_{\cone} |z|^{p-2 - n - sp} \Theta_2 \phi(\bar x, \cdot)(\bar y, z) \dz,
$$
and by Lemma~\ref{L1.5}(iii), we conclude the result.
\end{proof}

\begin{lem}[Estimate on $\mathcal D_1$]\label{lemD1}
Let $p \geq 2$. Assume $(\bar x, \bar y)$ satisfies~\eqref{E1.3} and set $\delta = \varepsilon_1 |\bar a|$ for some $\varepsilon_1 \in (0, \frac{1}{2})$. 
Then, there exist positive constants $C > 0$ and $L_0=L_{0}(m, m_1) $ such that
\begin{align*}
I_2 \geq & -C L^{p - 1} \varepsilon_1^{p(1 - s)} (\varphi'(|\bar a|))^{p-1} |\bar a|^{p(1 - s) - 1}
\end{align*}
for all $L \geq L_0$.
The constant $C > 0$ depends only on $n, p, s, \Lambda$, but not on $\bar a$ and $\varepsilon_1$.
\end{lem}

\begin{proof}
As done before, we write
\begin{align*}
\sL[\cD_1] w_1(\bar x) &= \sL[\cD_1] w_1(\bar x) - \sL[\cD_1] \ell (\bar x) 
\\
&= (p - 1) \int_{\cD_1} \int_{0}^{1}|\ell(z) + t \Theta_2 \phi(\cdot, \bar y)(\bar x, z)|^{p-2} \Theta_2 \phi(\cdot, \bar y)(\bar x, z)dt\, K(z) \dz
\end{align*}
For $z\in B_\delta$, we  have
\begin{align*}
\Theta_2 \phi(\cdot, \bar y)(\bar x, z) =  -\frac{1}{2}\int_{0}^{1}(1 - t) \langle D^2_x \phi(\bar x + tz) z, z \rangle dt.
\end{align*}
Observing that
\begin{align*}
\langle D^2_x \phi(\bar x + z, \bar y) z, z \rangle &=  L\Big{(}\varphi''(|\bar a + z|) - \frac{\varphi'(|\bar a + z|)}{|\bar a + z|} \Big{)} \langle \widehat{\bar a + z}, z \rangle^2 + L\frac{\varphi'(|\bar a + z|)}{|\bar a + z|} |z|^2 
\\
& + m_1 \langle D^2 \psi(\bar x + z) z, z \rangle,
\end{align*}
and taking into account that $|z| \leq \varepsilon_1 |\bar a|$ and  
$\varepsilon_1 < 1/2$, we get
$$
\langle D^2_x \phi(\bar x + tz) z, z \rangle \leq \Big{(} 2L\frac{\varphi'(|\bar a|/2)}{|\bar a|} + m_1 \sup_{B_{\delta}(\bar x)} |D^2 \psi| \Big{)}  |z|^2,
$$
using the fact that $\varphi$ is concave. 
Moreover, in every case of $\varphi$, since $\varphi'$ is uniformly bounded from below in a neighborhood of $0$, we have the existence of a universal constant $\kappa_0$ such that
$$
\langle D^2_x \phi(\bar x + tz) z, z \rangle \leq \kappa_0 L |\bar a|^{-1}\varphi'(|\bar a|)|z|^2, \quad z \in B_\delta,
$$
by taking $L$ large in terms of $m$ and $m_1$. Here we have used that $\varphi'(t)$ is proportional to $\varphi'(t/2)$ in every case of $\varphi$ for $t$ small.

By a similar argument, and since in every case the third derivative of $\varphi$ is positive, we arrive at
$$
\langle D^2_x \phi(\bar x + tz) z, z \rangle \geq \kappa_1 L (\varphi''(|\bar a|/2) - 2\frac{\varphi'(|\abar|/2)}{|\abar|})|z|^2- m_1 \sup_{B_{\delta}(\bar x)} |D^2 \psi||z|^2, \quad z \in B_\delta.
$$
Then, for some $\kappa_2 > 0$, taking $L$ large enough in terms of $m$ and $m_1$, these estimates imply that
\begin{equation}\label{Rio}
-\kappa_2 L |\bar a|^{-1}\varphi'(|\bar a|)|z|^2 \leq \Theta_2 \phi(\cdot, \bar y)(\bar x, z) \leq -\kappa_2 L (\varphi''(|\bar a|) - \frac{\varphi'(|\bar a|)}{|\abar|})|z|^2, \quad z \in B_\delta,
\end{equation}
where we have used that $\varphi''(t)$ is proportional to $\varphi''(t/2)$ for every $t$ small, in every case of $\varphi$ (see \eqref{thevarphis}).
Recalling $|\ell(z)| \leq (L \varphi'(|\bar a|) + m_1|\nabla \psi(\bar x)|)|z|$, for all $t \in (0,1)$ and $z \in B_\delta$, we have that
\begin{equation}\label{l+delta2}
|\ell(z)|+|\Theta_2 \phi(\cdot, \bar y)(\bar x, z))| \leq \kappa_3 L \Big{(} 1 + \varphi'(|\bar a|) + |\bar a| |\varphi''(|\bar a|)| \Big{)} |z|
\end{equation}
for some $\kappa_3 > 0$. Using the lower bound of $\Theta_2 \phi(\cdot, \bar y)(\bar x, z)$ given by \eqref{Rio}
together with \eqref{EL1.6A}, and then applying \eqref{l+delta2}, we arrive at
\begin{align*}
\sL[\cD_1] w_1(\bar x) &\geq  -\kappa_2  (p -1) L \frac{\varphi'(|\bar a|)}{|\bar a|} \int_{\cD_1} \Big{|} |\ell(z)| + |\Theta_2 \phi(\cdot, \bar y)(\bar x, z)| \Big{|}^{p-2} |z|^2 K(z)dz 
\\
&\geq  -\kappa_2 \kappa_3^{p - 2} (p-1) L^{p-1} \frac{\varphi'(|\bar a|)}{|\bar a|}(1 + |\varphi'(|\bar a|)| + |\bar a| |\varphi''(|\bar a|)|)^{p - 2} \int_{\cD_1} |z|^{p-2} |z|^2 K(z)dz 
\\
&\geq  -\kappa_4 \Lambda (p-1) L^{p-1} \frac{\varphi'(|\bar a|)}{|\bar a|}(1 + |\varphi'(|\bar a|)| + |\bar a| |\varphi''(|\bar a|)|)^{p-2} \int_{0}^{\varepsilon_1 |\bar a|} r^{p - ps -1}dr 
\\
&\geq  -\frac{\kappa_5 \Lambda (p - 1)}{p(1-s)}
L^{p - 1} (\varphi'(|\bar a|))^{p-1} \varepsilon_1^{p(1 - s)} |\bar a|^{p(1 - s) - 1}
\end{align*}
for some positive constants $\kappa_4, \kappa_5$. Here we have used that $|\bar a| |\varphi''(|\bar a|)| \leq \kappa_6\varphi'(|\bar a|)$ for some $\kappa_6 > 1$ and  $|\bar a|\leq \frac{1}{2}$,
 which can be easily seen to hold for the functions given by \eqref{thevarphis}.

Since a similar (upper) bound can be found for $\sL[\cD_1] w_2(\bar y)$ (with a slightly simpler computation), the result follows.
\end{proof}

\begin{lem}[Estimate on $\cD_2$]\label{lemD2}
Let $p \geq 2$. Assume $(\bar x, \bar y)$ satisfies~\eqref{E1.3}, and assume further that $u \in C^{0, \upkappa}(B_{\varrho_2})$ for some $\upkappa \geq 0$. Then, there exists $C =C(n, p, \Lambda, m, m_1)>0$
such that for every $L\geq L_0=L_0(\varrho_1, \varrho_2)>0$ we have
\begin{equation*}
I_3 \geq - C [u]_\upkappa^{p - 2} \Big{(}  \int_{\delta}^{\tilde \varrho} r^{\upkappa(p - 2) - sp + 1} dr + |\nabla \psi(\bar x)| \int_{\delta}^{\tilde \varrho} r^{\upkappa(p -2) - sp} dr \Big{)},
\end{equation*}
where $[u]_\upkappa=[u]_{C^{0,\upkappa}(B_{\varrho_2})}$ and $[u]_{C^{0,\upkappa}(A)}$ denotes the
$\upkappa$-H\"{o}lder seminorm on $A$ defined as
$$[u]_{C^{0,\upkappa}(A)}=\sup_{x\neq y, x,y\in A}\frac{|u(x)-u(y)|}{|x-y|^\upkappa}.$$
\end{lem}

\begin{proof}
By our choice of $L$ (that is, $L\varphi(\frac{\varrho_2-\varrho_1}{8})>2$),
 we have $|\abar|<\frac{1}{2}\tilde\varrho=\frac{\varrho_2-\varrho_1}{8}$, and since 
 $\xbar\in B_{\frac{\varrho_2+\varrho_1}{2}}$, $\ybar\in B_{\frac{5\varrho_2}{8}+\frac{3\varrho_1}{8}}$, we have $\xbar+z, \ybar+z\in B_{\varrho_2}\subset B_2$ for $z\in B_{\tilde\varrho}$.
Notice that using~\eqref{intJp}, and denoting 
\begin{equation}\label{theta1}
\Theta_1 f(x,z) := f(x) - f(x + z), 
\end{equation}
we have
 \begin{align*}
 I_3 = (p - 1) \int_{\cD_2} \int_{0}^{1} |\Theta_1 u(\bar y, z) + t(\Theta_1 u(\bar x, z) - \Theta_1 u(\bar y, z))|^{p-2} (\Theta_1 u(\bar x, z) - \Theta_1 u(\bar y, z)) dt\, K(z)\dz
 \end{align*}
Using  $\Phi(\xbar,\ybar)\geq \Phi(\xbar+z,\ybar+z)$, we note that
$$
u(\xbar)-u(\xbar+z)\geq u(\ybar)-u(\ybar+z) + m_1 (\psi(\xbar)-\psi(\xbar+z)),
$$
from which, we get $\Theta_1 u(\bar x, z) - \Theta_1 u(\bar y, z) \geq m_1 \Theta_1 \psi(\bar x, z)$, and we can write
\begin{align}\label{I3superback}
 I_3 \geq -m_1(p - 1) \int_{\cD_2} \int_{0}^{1} |\Theta_1 u(\bar y, z) + t(\Theta_1 u(\bar x, z) - \Theta_1 u(\bar y, z))|^{p-2}\,  dt\,  |\Theta_1 \psi(\bar x, z)| K(z)\dz .
\end{align}
At this point, we recall \eqref{theta2} to note that 
$$
|\Theta_1 \psi(\xbar, z)|\leq |\Theta_2 \psi(\bar x, z)| + |\nabla\psi(\xbar)||z|\leq \kappa(|z|^2+ |\nabla \psi(\bar x)||z|)
$$
for some constant $\kappa$. 
Moreover, using the $\upkappa$-H\"{o}lder regularity of $u$, we have
$$
|\Theta_1 u(\bar y, z) + t(\Theta_1 u(\bar x, z) - \Theta_1 u(\bar y, z))| \leq 3 [u]_{\upkappa} |z|^{\upkappa}.
$$
Thus, we arrive at
\begin{align}\label{new}
I_3 
&\geq -(p-1) \kappa_1 [u]_\upkappa^{p-2} \int_{\cD_2} |z|^{\upkappa (p - 2)}  (|z|^2+ |\nabla \psi(\bar x)||z|) K(z)dz \nonumber
\\
&\geq -\kappa_2 [u]_\upkappa^{p-2}  \Big{(}\int_{\cD_2} |z|^{\upkappa(p-2) + 2 - n - sp} dz + |\nabla \psi(\bar x)| \int_{\cD_2} |z|^{\upkappa(p - 2) + 1 - n - sp}dz \Big{)}, 
\end{align}
where $\kappa_2=\kappa_2(p, \Lambda, m, m_1)$ is some constant.
Since $\cD_2 \subset B_{\tilde\varrho} \setminus B_\delta$, we readily have
\begin{equation*}
I_3 \geq - \kappa_3 [u]_\upkappa^{p - 2} \Big{(}  \int_{\delta}^{\tilde \varrho} r^{\upkappa(p - 2) - sp + 1} dr + |\nabla \psi(\bar x)| \int_{\delta}^{\tilde \varrho} r^{\upkappa(p -2) - sp} dr \Big{)}
\end{equation*}
for some constant $\kappa_3=\kappa_3(n, p, s, \Lambda, m, m_1)$,
from which the result follows.
\end{proof}

Finally, the following estimate on $B_{\tilde\varrho}^c$ can be easily derived from~\eqref{E1.1}.
\begin{lem}\label{lemI4}
Let $p > 1$. There exists $\kappa_4 > 0$ just depending on the data such that
$$ 
|I_4|\leq \kappa_4(\sup_{B_2}|u|^{p-1}+ (\tail(u; 0, 2))^{p-1})\leq 2\kappa_4.
$$
\end{lem}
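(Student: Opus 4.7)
The plan is to exploit that, since $\delta < \tilde\varrho$, for every $z \in B^c_{\tilde\varrho}$ one has $\bar x + z \notin B_\delta(\bar x)$ and hence $w_1(\bar x + z) = u(\bar x + z)$ (and analogously $w_2(\bar y + z) = u(\bar y + z)$). After the standard normalization of adding a constant to each test function so that $w_1(\bar x) = u(\bar x)$ and $w_2(\bar y) = u(\bar y)$ at the touching points, I may write
\[
\sL[B^c_{\tilde\varrho}]\, w_1(\bar x) = (1-s)\int_{B^c_{\tilde\varrho}} J_p(u(\bar x) - u(\bar x + z))\, K(z)\, dz,
\]
and the analogous identity for $\sL[B^c_{\tilde\varrho}]\, w_2$ at $\bar y$. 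The task therefore reduces to controlling each of these two nonlocal integrals in terms of $\sup_{B_2}|u|^{p-1}$ and $\tail(u;0,2)^{p-1}$.

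Next, I would apply the elementary inequality $|J_p(a-b)| \leq c_p(|a|^{p-1} + |b|^{p-1})$, valid for every $p>1$, to split each integral into two pieces. The piece with $|u(\bar x)|^{p-1}$ is controlled by $\sup_{B_2}|u|^{p-1}$ times $\int_{B^c_{\tilde\varrho}} K(z)\, dz$, which under $K(z) \leq \Lambda|z|^{-n-sp}$ is at most $\Lambda\omega_{n-1}/(sp\,\tilde\varrho^{sp})$ and is uniformly bounded thanks to $s \geq s_0$. The remaining piece,
\[
\int_{B^c_{\tilde\varrho}} |u(\bar x + z)|^{p-1} K(z)\, dz,
\]
becomes, after the change of variables $y = \bar x + z$, an integral of $|u(y)|^{p-1} K(y - \bar x)$ over $B^c_{\tilde\varrho}(\bar x)$.

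I would then split this last integral as $\int_{B_2} + \int_{B_2^c}$. On $B_2$ the integrand is at most $\Lambda \sup_{B_2}|u|^{p-1}|y-\bar x|^{-n-sp}$, whose integral over $\{|y-\bar x|\geq \tilde\varrho\}$ is again uniformly bounded. On $B_2^c$ I would invoke the comparison
\[
|y - \bar x|^{-n-sp} \leq \kappa\, |y|^{-n-sp}, \qquad |y| \geq 2,
\]
to bound the integral by a constant multiple of $\tail(u;0,2)^{p-1}$. Performing the same argument for the $w_2$-term and adding up yields $|I_4| \leq \kappa_4(\sup_{B_2}|u|^{p-1} + \tail(u;0,2)^{p-1})$, while the second inequality $\leq 2\kappa_4$ then follows at once from the normalization~\eqref{E1.1}.

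The main (and essentially only non-routine) point is verifying the comparison $|y - \bar x|^{-n-sp} \leq \kappa|y|^{-n-sp}$ for $|y| \geq 2$. It rests on the inclusion $B_{\tilde\varrho}(\bar x) \subset B_2$, which follows from $|\bar x| + \tilde\varrho \leq (\varrho_1 + 3\varrho_2)/4 < 2$; once this is in hand, $|y - \bar x| \geq \max(|y| - |\bar x|,\tilde\varrho)$ is bounded below by a universal multiple of $|y|$ on $B_2^c$, with $\kappa$ depending only on $\varrho_1,\varrho_2,n,sp$. The same bound applies verbatim at $\bar y$, using the analogous constraint $|\bar y| \leq \frac{3\varrho_2}{4}+\frac{\varrho_1}{4}$.
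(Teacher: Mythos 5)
Your proof is correct; the paper omits the proof of this lemma entirely (it is asserted to be ``easily derived from~\eqref{E1.1}''), and your argument supplies exactly the standard computation the authors intend: normalize the test functions so that the far-field differences are $u(\bar x)-u(\bar x+z)$, apply $|a-b|^{p-1}\leq c_p(|a|^{p-1}+|b|^{p-1})$, and split the resulting integral over $B_2$ and $B_2^c$, comparing $K(y-\bar x)$ with $|y|^{-n-sp}$ on $B_2^c$ to recover the tail term. The quantitative points all check out, in particular the uniformity in $s\geq s_0$ of $(1-s)\int_{B_{\tilde\varrho}^c}K(z)\,dz$ and the lower bound $|y-\bar x|\geq\max\{|y|-|\bar x|,\tilde\varrho\}\geq c\,|y|$ for $|y|\geq 2$.
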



\subsection{Proof of Theorem~\ref{Tmain-1} in the superquadratic case.}
Now we are ready to provide the regularity estimates in the superquadratic case. We start with the following self-improving result. Furthermore, the proof of this proposition
contains all the key estimates required for the proof of Theorem~\ref{Tmain-1}. We also use the following notation to express the dependence on the 
constants.
\begin{align*}
\data &= (n, p, s, \lambda, \Lambda),
\\
\dataex &= (n, p, s, \lambda, \Lambda, m, m_1, \varrho_1,\varrho_2).
\end{align*}

\begin{prop}\label{T1.7}
Let $p \geq 2$ and $s\in (0, 1)$.
Let $\gamma_\circ=\gamma_\circ(s)=\min\{1, \frac{ps}{p-1}\}$.  Consider $1\leq \varrho_1<\varrho_2\leq 2$. Suppose that
$u\in C^{0,\upkappa}(B_{\varrho_2})\cap C(\bar{B}_2)\cap L^{p-1}_{sp}(\Rn)$ solves \eqref{E1.1} for some $\upkappa\in[0,\gamma_\circ)$.
Then for any $\gamma<\min\{\gamma_\circ, \upkappa+\frac{1}{p-1}\}$, there exists $L_\gamma$ such that
$$
\norm{u}_{C^{0,\gamma}(B_{\varrho_1})}\leq L_\gamma,
$$
and $L_\gamma$ depends on the bound of $[u]_{C^{0,\upkappa}(B_{\varrho_2})}$, $\dataex$, $\upkappa$ and $\gamma$. 
\end{prop}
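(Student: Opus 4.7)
The plan is to implement the Ishii--Lions doubling argument outlined in Subsection~\ref{subsecGS} with the H\"older profile $\varphi(t) = t^\gamma$, arguing by contradiction. I would assume $\sup_{B_2\times B_2} \Phi > 0$ for arbitrarily large $L$, and let $(\xbar, \ybar) \in B_{(\varrho_2+\varrho_1)/2}\times B_{(3\varrho_2+\varrho_1)/4}$ denote a maximum point with $\abar := \xbar - \ybar \neq 0$. Beyond the crude bound $L|\abar|^\gamma \leq 2$ coming from $\sup|u| \leq 1$, the hypothesis $u\in C^{0,\upkappa}(B_{\varrho_2})$ delivers the sharper estimate
$$
L|\abar|^\gamma = L\varphi(|\abar|) \leq u(\xbar) - u(\ybar) \leq [u]_\upkappa\, |\abar|^\upkappa,
$$
so that $|\abar| \leq ([u]_\upkappa/L)^{1/(\gamma - \upkappa)}$. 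This refined rate of decay of $|\abar|$, driven by $\upkappa$, is the arithmetic engine behind the bootstrap jump.

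Next, I would subtract the viscosity inequalities for $w_1, w_2$ and split the principal value into the four regions $\cone, \cD_1, \cD_2, B_{\tilde\varrho}^c$ to obtain $I_1 + I_2 + I_3 + I_4 \leq 2$, as in~\eqref{E1.4}. Using Lemma~\ref{L1.5}(i) (which yields $\delta^2\phi(\cdot,\ybar)(\xbar,z) \gtrsim L|\abar|^{\gamma-2}|z|^2$ on $\cone$) together with Lemma~\ref{lemcone}, the cone part produces
$$
I_1 \geq c_0\, L^{p-1} |\abar|^{\gamma(p-1)-sp}.
$$
With the choice $\delta = \varepsilon_1|\abar|$, Lemma~\ref{lemD1} gives a term of the same order but with a prefactor $\varepsilon_1^{p(1-s)}$, so taking $\varepsilon_1$ small (independent of $L$) absorbs $I_2$ into $I_1$, leaving $I_1 + I_2 \geq \tfrac{c_0}{2} L^{p-1}|\abar|^{\gamma(p-1)-sp}$. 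Lemma~\ref{lemI4} bounds $I_4$ by a universal constant via $\sup|u| \leq 1$ and $\tail(u;0,2) \leq 1$.

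The main obstacle is the $\cD_2$ contribution. Lemma~\ref{lemD2}, combined with $u \in C^{0,\upkappa}$, yields
$$
|I_3| \leq C_\upkappa [u]_\upkappa^{p-2} \bigl(|\abar|^{\upkappa(p-2)-sp+2} + |\abar|^{\upkappa(p-2)-sp+1}\bigr)
$$
in the regime where both integrals are singular at $\delta$ (with evident modifications when one or both converge). Inserting this bound together with $|\abar| \leq ([u]_\upkappa/L)^{1/(\gamma - \upkappa)}$ into the inequality $I_1 + I_2 + I_3 + I_4 \leq 2$ and comparing the induced exponents of $L$ on the two sides, the hypothesis $\gamma < \upkappa + 1/(p-1)$ is precisely what forces the coercive cone term to grow in $L$ strictly faster than every singular contribution in $|I_3|$, so that $I_1 > |I_3| + |I_4| + 2$ for all $L \geq L_\gamma$. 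This contradicts the viscosity-inequality bound, so $\Phi \leq 0$ on $B_2 \times B_2$; restricting to $B_{\varrho_1}\times B_{\varrho_1}$, where $\psi \equiv 0$, yields $u(x) - u(y) \leq L_\gamma|x-y|^\gamma$ and hence the claimed seminorm bound. The quantitative dependence $(ps_0 - \gamma(p-1) + \upkappa/2)^{-1}$ in $L_\gamma$ records the positive margin to $\gamma_\circ(s_0)$ that this exponent comparison affords, and all $(1-s)$ factors cancel uniformly in $s \in [s_0, 1)$, yielding stability as $s \to 1^-$.
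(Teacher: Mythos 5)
Your outline of the doubling argument, the four-way splitting, the cone coercivity $I_1\gtrsim L^{p-1}|\abar|^{\gamma(p-1)-sp}$, the absorption of $I_2$ via a small $\varepsilon_1$, and the bound on $I_4$ all match the paper. But the mechanism you propose for controlling $I_3$ — and hence the place where you claim the hypothesis $\gamma<\upkappa+\frac{1}{p-1}$ enters — does not work. The problematic terms in Lemma~\ref{lemD2} are \emph{negative} powers of $|\abar|$: after dropping $|\nabla\psi(\bar x)|$ (as you do) the worst contribution is of order $|\abar|^{\upkappa(p-2)-sp+1}$, and to beat it with $I_1\gtrsim L^{p-1}|\abar|^{\gamma(p-1)-sp}$ you must compare the exponents $\upkappa(p-2)+1-sp$ and $\gamma(p-1)-sp$. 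Your ``arithmetic engine'' $|\abar|\le([u]_\upkappa/L)^{1/(\gamma-\upkappa)}$ is an \emph{upper} bound on $|\abar|$, which gives \emph{lower} bounds on negative powers of $|\abar|$ — the wrong direction. There is no lower bound on $|\abar|$ available, so the extra decay of $|\abar|$ in $L$ cannot be converted into an upper bound on the singular part of $|I_3|$. Concretely, with $|\nabla\psi(\bar x)|$ treated as a constant your argument only closes when $\gamma(p-1)\le\upkappa(p-2)+1$, i.e.\ $\gamma\le\upkappa\frac{p-2}{p-1}+\frac{1}{p-1}$, which is strictly smaller than the claimed range $\gamma<\upkappa+\frac{1}{p-1}$ whenever $\upkappa>0$ and $p>2$ (e.g.\ $p=3$, $\upkappa=0.4$, $\gamma=0.8$ is admissible in the statement but not reachable by your comparison).

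The missing idea is that the $C^{0,\upkappa}$ hypothesis is used not to refine the decay of $|\abar|$ but to make $|\nabla\psi(\bar x)|$ small as a power of $|\abar|$: from $\Phi(\bar x,\bar y)>0$ one gets $m_1\psi(\bar x)\le u(\bar x)-u(\bar y)\le[u]_\upkappa|\abar|^{\upkappa}$, hence $(|\bar x|^2-\varrho_1^2)_+\lesssim|\abar|^{\upkappa/m}$ and $|\nabla\psi(\bar x)|\lesssim|\abar|^{\frac{m-1}{m}\upkappa}$. This upgrades the worst term in $I_3$ to order $|\abar|^{\upkappa(p-2)+\frac{m-1}{m}\upkappa+1-sp}$, and choosing $m$ large makes $\upkappa(p-2)+\frac{m-1}{m}\upkappa+1>\gamma(p-1)$ exactly equivalent (in the limit) to $\gamma<\upkappa+\frac{1}{p-1}$; that is where the hypothesis genuinely enters. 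You also need the careful three-case evaluation of $\int_\delta^{\tilde\varrho}r^{\upkappa(p-2)-sp+1}\,dr$ (with the exponent kept away from $-1$ via $ps_0-\gamma(p-1)+\upkappa/2>0$) to obtain the constant $C_{\upkappa,\varepsilon_1}$ uniformly in $s\ge s_0$, which your ``evident modifications'' gloss over but which is precisely what the stated dependence of $L_\gamma$ records.
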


\begin{proof}
Since $|u|\leq 1$ in $B_2$, we only estimate the $\gamma$-H\"{o}lder seminorm of $u$ in $B_{\varrho_1}$.

Fix $\gamma\in (0, \min\{\gamma_\circ(s), \upkappa+\frac{1}{p-1}\})$. Set $m$ large enough so that
\begin{equation}\label{ET1.70}
\upkappa (p-2) + \frac{m-1}{m}\upkappa + 1>\gamma(p-1),\quad \text{and}\quad \gamma_\circ(s)>\frac{m-1}{m}\upkappa\geq \frac{\upkappa}{2}.
\end{equation}
We consider $\Phi$ in \eqref{E1.2} with the function $\varphi=\varphi_\gamma$,
that is,
$$
\Phi(x, y)=u(x)-u(y)-L\varphi_\gamma(|x-y|)-m_1\psi(x)\quad x, y\in B_2.
$$
As mentioned above, for $L\varphi(\frac{\varrho_2-\varrho_1}{8})>2$, we have a maximizer $(\xbar, \ybar)$. Since 
$\Phi(\xbar,\ybar)>0$, we have $L\varphi_\gamma(|\abar|)\leq u(\xbar)-u(\ybar)\leq 2$. We set $L> 2 [\frac{\varrho_2-\varrho_1}{8}]^{-\gamma}$. Then
$|\abar|<\frac{\varrho_2-\varrho_1}{8}$.
From $\Phi(\xbar,\ybar)>0$, it also follows that
$$\psi(\xbar) \leq \frac{1}{m_1} (u(\xbar)-u(\ybar))\leq \frac{1}{m_1} [u]_\upkappa\, |\abar|^\upkappa,
\quad \text{where}\quad [u]_\upkappa:=[u]_{C^{0,\upkappa}(B_{\varrho_2})},
$$
giving us $(|\xbar|^2-\varrho_1^2)_+\leq \left(\frac{1}{m_1}[u]_\upkappa |\abar|^\upkappa\right)^{\frac{1}{m}}$. Hence
\begin{equation}\label{ET1.7A}
|\nabla\psi(\xbar)|\leq 4m \left(\frac{1}{m_1} [u]_\upkappa |\abar|^\upkappa\right)^{\frac{m-1}{m}}.
\end{equation}

Now we estimate the terms $I_i, i=1,2,3,4$, from \eqref{E1.4}. Set $\delta_0=\varepsilon\wedge \frac{1}{2}$ where
$\varepsilon$ is given by Lemma~\ref{L1.5}.
For $I_1$, using the above choice of $\varphi$ in 
Lemma~\ref{L1.5}(i) and Lemma~\ref{lemcone}, we conclude that
\begin{equation*}
I_1 \geq c_1  L^{p-1} |\bar a|^{(\gamma - 1)(p - 2) + \gamma - 2} \int_{\cone} |z|^{p  - n - sp} \dz
\end{equation*}
for all $L\geq L_0=L_0(p, \lambda, m, m_1)$.
The estimates in \cite[Example~1]{BCCI12} give
\begin{equation}\label{intconeBarles}
\int_\cone |z|^{p-n-ps} \dz= \frac{{\rm Vol}(\cone)}{{\rm Vol}(B_{\delta_0 |\abar|})}\int_{B_{\delta_0 |\abar|}} |z|^{p-n-ps} \dz\gtrsim 
\frac{1}{p(1-s)}\delta_0^{\frac{n-1}{2}} (\delta_0|\abar|)^{p-ps}.
\end{equation}
Combining these estimates we arrive at 
\begin{equation}\label{ET1.7B}
I_1\geq  C_{\varepsilon} L^{p-1}|\abar|^{\gamma(p-1)-ps}
\end{equation}
for all $L\geq L_0$ and $C_\varepsilon=C_\varepsilon(n, p, s, \varepsilon)$.
For $I_2$, we employ Lemma~\ref{lemD1} with this choice of $\varphi$. Since $\gamma < 1$ and $|\bar a|$ is small when $L$ is large, we can write
\begin{equation*}
I_2\geq -C L^{p - 1} \varepsilon_1^{p(1 - s)} |\bar a|^{\gamma(p-1)-ps} \quad \text{for all}\; L\geq L_0=L_0(m, m_1).
\end{equation*}
In view of \eqref{ET1.7B},  we can choose
$\varepsilon_1\in (0, \varepsilon)$ small enough, depending on $\varepsilon$ and not on $L$ and $|\abar|$, so that
\begin{equation}\label{ET1.7D}
I_1+I_2\geq \frac{C_\varepsilon}{2} L^{p-1}|\abar|^{\gamma(p-1)-ps}
\end{equation}
for $L\geq L_0$ where $L_0$ depends on $n, p, s, \lambda, m$ and $m_1$.

\medskip

Now we fix the choice of $\varepsilon$ and $\varepsilon_1$ from above.
For $I_3$, we use Lemma~\ref{lemD2} together with~\eqref{ET1.7A} to conclude that
\begin{equation}\label{rev1}
I_3 \geq -C  [u]_{\upkappa}^{p-2}\Big{(} \underbrace{\int_{\delta}^{1} r^{\upkappa(p - 2) - sp + 1}\, dr}_{=\mathcal{I}} + [u]_{\upkappa}^{\frac{m - 1}{m}} |\bar a|^{\frac{(m - 1)\upkappa}{m}} \int_{\delta}^{1} r^{\upkappa(p - 2) - sp}\, dr \Big{)}
\end{equation}
for all $L\geq L_0(\varrho_1, \varrho_2)$.
 To calculate the second integration we recall that (by our choice)
$$\upkappa(p-2)\geq \gamma(p-1) -\frac{m-1}{m}\upkappa-1,
\quad \text{and}\quad \frac{m-1}{m}\upkappa\geq \frac{\upkappa}{2}\, .$$

Therefore, since $\gamma(p-1) -\frac{m-1}{m}\upkappa-1 - sp<0$, we get
\begin{align*}
|\abar|^{\frac{(m - 1)\upkappa}{m}} \int_{\delta}^{1} r^{\upkappa(p - 2) - sp}\, dr &\leq
|\abar|^{\frac{(m - 1)\upkappa}{m}} \int_{\delta}^{1} r^{\gamma(p-1) -\frac{m-1}{m}\upkappa-1 - sp}\, dr
\\
&\leq \frac{|\abar|^{\frac{(m - 1)\upkappa}{m}}}{sp+\frac{m-1}{m}\upkappa-\gamma(p-1)} 
\delta^{\gamma(p-1) -\frac{m-1}{m}\upkappa - sp}
\\
&\leq \frac{1}{sp+\upkappa/2-\gamma(p-1)} \varepsilon_1^{\gamma(p-1) -\frac{m-1}{m}\upkappa - sp}
|\abar|^{\gamma(p-1)-sp}.
\end{align*}

The integral $\mathcal{I}$ is computed as follows.
\begin{enumerate}
\item If $\upkappa(p-2)+2 -sp>0$, we have $\mathcal{I}\leq C_\upkappa$ for some constant $C_\upkappa$.
\item If $\upkappa(p-2)+2 -sp=0$, then 
$$\mathcal{I}\leq -\log (\varepsilon_1|\abar|)\leq (|\log\varepsilon_1| + C |\abar|^{\gamma(p-1)-sp})$$
for some constant $C$, using the fact that $\gamma(p-1)-sp<0$.
\item If $\upkappa(p-2)+2 -sp<0$, then using the fact that for $x\in [1, \infty)$, the map $(0, \infty)\ni\alpha\mapsto\frac{x^\alpha-1}{\alpha}$ is
increasing, we obtain
$$\mathcal{I}\leq  \frac{\delta^{\upkappa(p-2)+2 -sp}-1}{sp-2-\upkappa(p-2)}\leq 
 \frac{\delta^{\gamma(p-1)-sp}-1}{sp-\gamma(p-1)}\leq C_\upkappa \varepsilon_1^{\gamma(p-1)-sp}|\abar|^{\gamma(p-1)-sp}$$
using $0< sp-2-\upkappa(p-2)<sp- \gamma(p-1)$ (see \eqref{ET1.70}),  where $C_\upkappa=\frac{1}{sp-\gamma(p-1)}$.
\end{enumerate}
Hence, combining the above estimates we get
\begin{equation}\label{ET1.7E}
I_3 \geq - C_{\upkappa,\varepsilon_1} |\bar a|^{\gamma(p - 1) - sp}
\end{equation}
for some $ C_{\upkappa,\varepsilon_1}$ dependent on 
$[u]_\upkappa, \varepsilon_1$, but not on $L$ and $|\bar a|$.
The bound for $I_4$ comes immediately from Lemma~\ref{lemI4}. Gathering these estimates with~\eqref{ET1.7D} and \eqref{ET1.7E} into \eqref{E1.4}, 
 we obtain
\begin{equation*}
(\frac{C_\varepsilon}{2} L^{p-1} - C_{\upkappa, \varepsilon_1}) |\bar a|^{\gamma(p - 1) - sp} \leq 2 + 2 \kappa_4
\end{equation*}
for all $L\geq L_0$, where $L_0=L_0(\dataex)$ is a constant.
Since $\gamma < \gamma_\circ$, the exponent of $|\bar a|$ is negative, 
and the last inequality clearly cannot hold for large enough $L$, leading to a contradiction. 
Hence, if we set $L=L_\gamma>L_0$ large enough so that
the display can not hold, we must have $\Phi\leq 0$ in $B_2\times B_2$. Hence
$$u(x)-u(y)\leq L_\gamma\varphi_\gamma(|x-y|)=L_\gamma |x-y|^{\gamma}\quad x, y\in B_{\varrho_1}.$$
This completes the proof.
\end{proof}

Now we are ready to provide the proof of Theorem~\ref{Tmain-1} in the superquadratic case. 
\begin{proof}[Proof of Theorem~\ref{Tmain-1} (superquadratic case)]
As discussed above, it is enough to consider \eqref{E1.1}. 
We first establish that $u$ is $C^{0,\gamma}$ for $\gamma$ smaller than, but arbitrarily close to $\gamma_\circ(s)$. 

Fix $\gamma\in (0, \gamma_\circ(s))$ and let $\alpha= \frac{1}{2}\min\{\gamma, \frac{1}{p-1}\}$. Let $k\in\mathbb{N}$ be such that $k \alpha <\gamma \leq (k+1)\alpha$. We set $\upkappa_0=0$ and $\upkappa_i=i\alpha$. Note that $\upkappa_i<\upkappa_{i+1}< \upkappa_i \alpha +\frac{1}{p-1}$ for $i=0, \ldots, k$, $\upkappa_k<\gamma<\gamma_\circ(s)$, giving us
$\upkappa_{i+1}<\min\{\gamma_\circ(s), \upkappa_i + \frac{1}{p-1}\}$ for $i=0,\ldots, k-1$.
By this choice we
also have 
$$
sp-\upkappa_i(p-1)+\frac{1}{2}\upkappa_{i-1}\geq \min\left\{\frac{\gamma(p-1)}{2}, \frac{\alpha}{2}\right\}\quad \text{for}\; i=1, \ldots, k.
$$
Now choose
a sequence of strictly decreasing numbers $2>\varrho_0>\varrho_1>\ldots>\varrho_k>1$. Applying Proposition~\ref{T1.7} successively
we see that $u\in C^{0, \upkappa_i}(B_{\varrho_i})$ and the norm $\norm{u}_{C^{0, \upkappa_i}(B_{\varrho_i})}$ is bounded by a universal
constant dependent on $\data$. 
Again, applying Proposition~\ref{T1.7} with $\upkappa=\upkappa_k$ (since $\gamma<\min\{\gamma_\circ(s), \upkappa_k+\frac{1}{p-1}\}$), we establish that $u\in C^{0,\gamma}(B_1)$
and $\norm{u}_{C^{0, \gamma}(B_{1})}\leq C_1$ for some constant $C_1$.

\medskip

Next, we suppose that $\gamma_\circ=\frac{sp}{p-1}<1$. We show that $u\in C^{0, \gamma_\circ}(B_1)$. Since
$$ 
\frac{sp}{p-1}(p-2) -sp +1=1-\frac{sp}{p-1}>0,
$$
we can choose $\upkappa<\gamma_\circ$ large enough
so that $\upkappa(p-2)+2-ps=(\upkappa-\gamma_\circ)(p-2) + 2-\gamma_\circ>0$, $\upkappa(p-2)-sp +1>0$, and $\gamma_\circ-\upkappa<\frac{1}{p-1}$. From the first part we know that 
$u\in C^{0, \upkappa}(B_{3/2})$ and $\norm{u}_{C^{0,\upkappa}(B_{3/2})}\leq C_2$ for some $C_2>0$. 

Then, we employ the estimates presented in Proposition~\ref{T1.7} with $\varphi = \varphi_{\gamma_\circ}$ and $\varrho_1=1<\varrho_2=3/2$.
Also, fix any $m>3$. In this case, the estimate for $I_1 + I_2$ from~\eqref{ET1.7D} reads
$$
I_1+I_2\geq \frac{1}{2}C_{\varepsilon} L^{p-1}
$$
for all $L\geq L_0=L_0(\dataex)$. 
 By our choice of $\upkappa$, we see from \eqref{rev1} that
$$I_3\geq -C$$
for some constant $C=C(n,p,s,\Lambda)$ and $L\geq L_0$, where $L_0$ is chosen from Lemma~\ref{lemD2}. Since the estimate for $I_4$ is the same as before (Lemma~\ref{lemI4}), we arrive at
$$
 \frac{1}{2}C_{\varepsilon} L^{p-1}\leq 2 + \kappa(\varepsilon_1) + 2\kappa_4,
$$
for all $L\geq L_0=L_0(\dataex)$. Again, if we fix any $L>L_0$, which violates the above inequality, we must have $\Phi\leq 0$ in $B_2\times B_2$, implying the result.

\medskip

Now we are left with the case $\frac{sp}{p-1}>1$ and $\gamma_\circ=1$. Recall $\tilde\varphi$ from Lemma~\ref{L1.5} that satisfies
the following:
\begin{equation}\label{logchoice}
\begin{gathered}
	\frac{r}{2}  \leq \tilde\varphi(r)\leq r,\quad
	\frac{1}{2}  \leq \tilde\varphi'(r)\leq 1,
	\\
	- 2(r\log^2(r))^{-1}\leq \tilde\varphi^{\prime\prime}(r)\leq - (r\log^2(r))^{-1}.
\end{gathered}
\end{equation}
for  all $r\in (0, r_\circ]$. We let 
$\varphi(t)=\tilde\varphi(\frac{r_\circ}{3}t)$ for $t\in [0, 2]$
This time we set the parameters as follows
\begin{equation}
	\begin{gathered}
			\phi(x, y)=L\varphi(|x-y|)+m_1\psi(x), \quad \delta_0=\varepsilon (\log^2(|\abar|))^{-1}, \quad \rho=\frac{\frac{n+1}{2}+p-sp}{p-sp},
			\\
			\delta=\varepsilon_1 |\abar| (\log^{2\rho}(|\abar|))^{-1}, \quad \cone=\{z\in B_{\delta_0|\abar|} \; :\; |\langle \abar, z\rangle| \geq (1-\delta_0)|\abar||z|\},\label{defrho}
            \\
			\cD_1=B_\delta\cap \cone^c\quad \cD_2= B_{\frac{1}{8}}\setminus(\cD_1\cup\cone).
	\end{gathered}
\end{equation}
Let $m \geq 3$ be large and $\upkappa \in (0,1)$ be close enough to $1$ so that
\begin{equation*}
\frac{m - 1}{m} \upkappa + \upkappa(p-2) + 1-sp > 0.
\end{equation*}
From the first part we know that $u\in C^{0, \upkappa}(B_{\frac{3}{2}})$
and 
$$\norm{u}_{C^{0,\upkappa}(B_{3/2})}\leq C_2$$
 for some constant $C_2=C_2(\data)>0$. We consider the doubling function
 $$\Phi(x, y)= u(x)-u(y)-L \varphi(|x-y|)- m_1 \psi(x)\quad x, y\in B_2,$$
 where $\psi(x) = [(|x|^2-1)_+]^m$.
  Now choose $m_1$ large enough so that $m_1\psi(x)\geq 4$ for $|x|\geq 5/4$.
 From now on we fix this choice of $m$ and $m_1$.
 Suppose that, for all large $L$, we have~\eqref{E1.3}.

As before, set $\abar=\xbar-\ybar$ and note that  $|\xbar|\leq \frac{5}{4}$.
Again, since $L \varphi(t)\geq L \varphi(\frac{1}{8})\geq \frac{L}{16}$ for $t\in[ \frac{1}{8}, 2]$,  for large $L$ we must have $|\abar|< \frac{1}{8}$.
Moreover,
\begin{equation*}
\frac{1}{2} |\abar| L\leq L \varphi(|\abar|)\leq u(\xbar)-u(\ybar)\leq 2\; \Rightarrow L |\abar|\leq 4,
\end{equation*}
and \eqref{ET1.7A} becomes
\begin{equation}\label{ET1.7G}
|\nabla\psi(\xbar)|\leq \kappa_1 |\abar|^{\frac{m-1}{m}\upkappa},
\end{equation}
for some constant $\kappa_1=\kappa_1(m, C_2)$.
Using Lemma~\ref{lemcone} and Lemma~\ref{L1.5}(ii)-(iii), we see that
\begin{align*}
I_1 \geq \kappa L^{p-1} (|\abar|\log^2(|\abar|))^{-1} \int_{\cone} |z|^{p - n - sp} dz  = \kappa L^{p-1} (|\abar|\log^2(|\abar|))^{-1} \delta_0^{(n - 1)/2} (\delta_0 |\bar a|)^{p - sp},
\end{align*}
using \eqref{intconeBarles},
and by the above choice of $\delta_0$ and the  we arrive at
\begin{equation}\label{ET1.4H}
I_1\geq C_{\varepsilon} L^{p-1}|\abar|^{p-1-ps} 
(\log^2(|\abar|))^{-\beta},
\end{equation}
for all $L\geq L_0$, where we have denoted $\beta = \frac{n+1}{2}  + p-sp$ and the choice of $\varepsilon$ is set from Lemma~\ref{L1.5}.

Notice that by~\eqref{defrho} we have $\rho = \frac{\beta}{p(1 - s)}$.
Using Lemma~\ref{lemD1} and the choice of $\delta$ above, we see that
\begin{align*}
I_2 \geq -c L^{p-1} (\varepsilon_1 \log^{-2\rho}(|\abar|))^{p(1 - s)}|\bar a|^{p - sp - 1} = -c L^{p-1} \varepsilon_1^{p(1 - s)} |\bar a|^{p - sp - 1 } (\log^2(|\abar|))^{-\beta},
\end{align*}
from which, taking $\varepsilon_1$ small enough with respect to  $C_{\varepsilon}$ in~\eqref{ET1.4H} we obtain that
\begin{equation}\label{ET1.4I}
I_1+I_2\geq \frac{C_{\varepsilon}}{2}L^{p-1} 
|\abar|^{p-ps-1}(\log^2(|\abar|))^{-\beta},
\end{equation}
for all $L\geq L_0$. 
The estimate for $I_4$ is exactly as before, see Lemma~\ref{lemI4}.

For $I_3$, we use Lemma~\ref{lemD2} together with~\eqref{ET1.7G} to obtain
$$I_3 \geq - \kappa\left(  \int_{\delta}^{1} r^{\upkappa(p - 2) - sp + 1} dr + |\abar|^{\frac{m-1}{m}\upkappa} \int_{\delta}^{1} r^{\upkappa(p -2) - sp}\, dr \right),
$$
where the constant $\kappa$ depends on $C_2, m, m_1$. Now by our choice of $\upkappa$ we have 
$$\upkappa (p-2) -sp+2>  \upkappa (p-2) -sp+1 +\frac{m-1}{m}\upkappa>0,$$
giving us,
$$\int_{\delta}^{1} r^{\upkappa(p - 2) - sp + 1} dr\leq 
\frac{1}{\upkappa (p-2) -sp+2}.$$
Again, since $\upkappa(p-2)-sp\neq -1$,
\begin{align*}
|\abar|^{\frac{m-1}{m}\upkappa} \int_{\delta}^{1} r^{\upkappa(p -2) - sp}\, dr &\leq \frac{1}{sp-\upkappa(p-2)-1} 
|\abar|^{\frac{m-1}{m}\upkappa} \delta^{\upkappa(p -2) - sp + 1}
\\
&\leq \frac{1}{sp-(p-1)} \varepsilon_1^{\upkappa(p -2) - sp + 1} 
|\abar|^{\sigma} (\log^{2\rho}(|\abar|))^{sp-\upkappa(p -2)-1}
\end{align*}
where $\sigma=\upkappa(p -2) - sp + 1+\frac{m-1}{m}\upkappa>0$.
Thus, for some constants $\kappa(\varepsilon_1), \kappa_1$, we have
\begin{equation}\label{ET1.721}
I_3\geq -\kappa_1 -\kappa(\varepsilon_1) |\abar|^{\sigma} (\log^{2\rho}(|\abar|))^{sp-\upkappa(p -2)-1}.
\end{equation}
Now, combining \eqref{ET1.4I} and \eqref{ET1.721} we obtain
$$\frac{C_{\varepsilon}}{2}L^{p-1} 
|\abar|^{p-ps-1}(\log^2(|\abar|))^{-\beta} - \kappa(\varepsilon_1) |\abar|^{\sigma} (\log^{2\rho}(|\abar|))^{sp-\upkappa(p -2)-1}
\leq 2 + \kappa_1 + \kappa_4,$$
which can not hold for large enough $L$. Thus, we arrive at a contradiction in the same way as before, completing the proof.

\end{proof}


\section{The subquadratic case $1<p<2$}\label{S-sub}
The argument in this case is broadly similar to the superquadratic case, but some of the key estimates change due to the singularity appearing
within the regime $p < 2$. 

\subsection{Technical lemmas.} We recall the definition of $\phi$ in~\eqref{defphi}, and let us start with the estimate on the cone $\cone$.
\begin{lem}(Estimate on $\cone$)\label{lemconesub}
Let $1 < p < 2$. Assume $(\bar x, \bar y)$ satisfies~\eqref{E1.3}. Then, there exist positive constants $c$ and $L_0$,
dependent on $m_1, m, \lambda, p$,
such that
\begin{align*}
I_1 \geq & \ c  (L \varphi'(|\bar a|))^{p-2} \int_{\cone} |z|^{p-2 - n - sp}\, \Theta_2 \phi(\cdot, \bar y)(\bar x, z) \dz
\end{align*}
for all $L \geq L_0.$
\end{lem}

\begin{proof}
Exactly as in the proof of Lemma~\ref{lemcone}, we have
\begin{equation*}
\sL[\cone] w_1(\bar x) \geq
(p-1)\int_{\cone} \left[\int_0^1|\ell(z) + t \Theta_2 \phi(\cdot, \bar y)(\bar x, z)|^{p-2} dt\right]  \Theta_2 \phi(\cdot, \bar y)(\bar x, z) K(z) \dz.
\end{equation*}
Using~\eqref{l+delta2} (which is valid for $z$ small enough, independent of $|\bar a|$), we have
$$
|\ell(z) + t \Theta_2 \phi(\cdot, \bar y)(\bar x, z)| \leq L \kappa (1 + \varphi'(|\bar a|)) |z| + |\bar a| \varphi''(|\bar a|) |z|
$$
for some $\kappa > 0$ not depending on $|\bar a|$ and $L$. Since for all the cases of $\varphi$ (see \eqref{thevarphis}) we have $|\bar a| |\varphi''(|\bar a|)| \leq \kappa_1 \varphi'(|\bar a|)$ for some universal constant $\kappa_1$ and $|\abar|\leq \frac{1}{2}$, and taking into account the positivity of $\Theta_2 \phi(\cdot, \bar y)(\bar x, z)$ for $z \in \cone$ together with the first inequality in~\eqref{EL1.6A}, we conclude that
$$
\sL[\cone] w_1(\bar x) \geq \kappa_2 (L\varphi'(|\bar a|))^{p-2} \int_{\cone} |z|^{p - 2 - n - sp} \Theta_2 \phi(\cdot, \bar y)(\bar x, z)\dz,
$$
using the fact that $L\varphi'(|\abar|)\geq 1$ for all $L\geq L_0$ for some $L_0$ suitably fixed at a large value.
The result follows by the same arguments as at the end of the proof of Lemma~\ref{lemcone}.
\end{proof}

\begin{lem}[Estimate on $\mathcal D_1$]\label{lemD1sub}
Let $1 < p < 2$. Assume $(\bar x, \bar y)$ satisfies~\eqref{E1.3} and set $\delta = \varepsilon_1 |\bar a|$ for  $\varepsilon_1 \in (0, \frac{1}{2})$. Then, there exists $C=C(n, p, s,\Lambda)$ such that
\begin{align*}
I_2 \geq & -C \varepsilon_1^{p(1 - s)} (L\varphi'(|\bar a|))^{p-1} |\bar a|^{p(1 - s)-1}
\end{align*}
for all $L \geq L_0$, where $L_0=L_0(m, m_1)$.
\end{lem}

\begin{proof}
As in Lemma~\ref{lemD1}, we write
\begin{align*}
\sL[\cD_1] w_1(\bar x) = &\sL[\cD_1] w_1(\bar x) - \sL[\cD_1] \ell (\bar x) \\
= & (p - 1) \int_{\cD_1} \int_{0}^{1}|\ell(z) + t \Theta_2 \phi(\cdot, \bar y)(\bar x, z)|^{p-2} \Theta_2 \phi(\cdot, \bar y)(\bar x, z)dt\, K(z) \dz.
\end{align*}

By the ellipticity condition on $K$ and the lower bound for $\Theta_2 \phi(\cdot, \bar y)(\bar x, z)$ in~\eqref{Rio}, we arrive at
\begin{align*}
\sL[\cD_1] w_1(\bar x) \geq &  -\kappa (p - 1) L \frac{\varphi'(|\bar a|)}{|\bar a|} \int_{\cD_1} \int_{0}^{1}|\ell(z) + t \Theta_2 \phi(\cdot, \bar y)(\bar x, z)|^{p-2} dt\, |z|^{-n - ps + 2} \dz.
\end{align*}

Using the second inequality in~\eqref{EL1.6A}, we have
\begin{align*}
\sL[\cD_1] w_1(\bar x) \geq &  -\kappa (p - 1) c_p L \frac{\varphi'(|\bar a|)}{|\bar a|} \int_{\cD_1} \Big{(} |\ell(z)| + |\Theta_2 \phi(\cdot, \bar y)(\bar x, z)| \Big{)}^{p-2}  |z|^{-n - ps + 2} \dz \\
\geq &  -\kappa (p - 1) c_p L \frac{\varphi'(|\bar a|)}{|\bar a|}\int_{\cD_1}  |\ell(z)|^{p-2}  |z|^{-n - ps + 2} \dz,
\end{align*}
and since $|\ell(z)| \geq \kappa_1 L \varphi'(|\bar a|)|z|$ for some $\kappa_1 > 0$ and for all $L$ large in terms of $m_1$, we get
\begin{align*}
\sL[\cD_1] w_1(\bar x) \geq &  -\kappa_2 |\bar a|^{-1}(L \varphi'(|\bar a|))^{p-1} \int_{\cD_1} |z|^{-n - ps + p} \dz \\
\geq & -\kappa_2 \varepsilon_1^{p(1 - s)}(L \varphi'(|\bar a|))^{p-1} |\bar a|^{p(1 - s) - 1}.
\end{align*}
Since a similar lower bound can be obtained for $-\sL[\cD_1] w_2(\bar y)$, we conclude the result.
\end{proof}

\begin{lem}\label{lemD2sub}
Let $1 < p < 2$. Assume $(\bar x, \bar y)$ satisfies~\eqref{E1.3} and set $\delta = \varepsilon_1 |\bar a|$ for $\varepsilon_1 \in (0, \frac{1}{2})$. Then, there exist positive constants $\kappa=\kappa(n,p,m,m_1)$ and $L_0 =L_0(\varrho_1, \varrho_2)$ such that
$$
I_3 \geq -\kappa  \Big{(} \int_{\delta}^{\tilde \varrho} r^{2(p-1) - sp - 1} dr + |\nabla \psi(\bar x)|^{p-1} \int_{\delta}^{\tilde \varrho} r^{p-1 - sp - 1}dr\Big{)}
$$
for all $L \geq L_0$.
\end{lem}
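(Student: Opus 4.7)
The plan is to adapt the argument of Lemma~\ref{lemD2} by replacing the integral representation~\eqref{intJp} of $J_p$ with its $(p-1)$-H\"older continuity on $\R$, which is the natural tool when $1<p<2$:
$$
|J_p(a) - J_p(b)| \leq C_p |a - b|^{p-1} \quad \text{for all } a, b \in \R.
$$
The \textbf{main obstacle} relative to the super-quadratic case is precisely the singularity of $|\cdot|^{p-2}$ at zero: the integrand $|\delta^1 u(\bar y,z)+t(\delta^1 u(\bar x,z)-\delta^1 u(\bar y,z))|^{p-2}$ appearing in the integral form of $J_p$ blows up when the increments of $u$ are small, so any a~priori H\"older estimate on $u$ is useless for bounding it from above. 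Working with $J_p$ itself as a $(p-1)$-H\"older map bypasses this issue, at the cost that $\delta^1\psi$ will now appear with exponent $p-1$ rather than linearly.

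First, since $|z|\geq\delta$ for every $z\in\cD_2$, the increments involving $w_1,w_2$ reduce (modulo constants that drop out) to the increments of $u$, so
$$
I_3 = (1-s)\int_{\cD_2}\bigl(J_p(\delta^1 u(\bar x,z))-J_p(\delta^1 u(\bar y,z))\bigr) K(z)\,dz.
$$
From $\Phi(\bar x,\bar y)\geq \Phi(\bar x+z,\bar y+z)$ we deduce, exactly as in Lemma~\ref{lemD2},
$$
\delta^1 u(\bar x,z)-\delta^1 u(\bar y,z)\geq m_1\,\delta^1\psi(\bar x,z).
$$

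Next, split $\cD_2=\cD_2^+\cup\cD_2^-$ according to the sign of $\delta^1 u(\bar x,z)-\delta^1 u(\bar y,z)$. On $\cD_2^+$ the integrand is nonnegative by monotonicity of $J_p$ and hence contributes nonnegatively to $I_3$. On $\cD_2^-$ one necessarily has $m_1\delta^1\psi(\bar x,z)<0$ and
$$
|\delta^1 u(\bar x,z)-\delta^1 u(\bar y,z)|\leq m_1|\delta^1\psi(\bar x,z)|.
$$
Combining with the H\"older bound on $J_p$ yields
$$
I_3 \geq -C_p m_1^{p-1}(1-s)\int_{\cD_2}|\delta^1\psi(\bar x,z)|^{p-1}K(z)\,dz.
$$

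Finally, a second-order Taylor expansion of $\psi\in C^2$ at $\bar x$ gives $|\delta^1\psi(\bar x,z)|\leq \kappa(|z|^2+|\nabla\psi(\bar x)||z|)$. Applying the subadditivity $(a+b)^{p-1}\leq a^{p-1}+b^{p-1}$ (valid for $p-1\in(0,1)$ and $a,b\geq 0$) together with $K(z)\leq \Lambda|z|^{-n-sp}$ yields
$$
I_3 \geq -\kappa(1-s)\int_{\cD_2}\bigl(|z|^{2(p-1)-n-sp}+|\nabla\psi(\bar x)|^{p-1}|z|^{(p-1)-n-sp}\bigr)dz,
$$
and passing to polar coordinates on $\cD_2\subset B_{\tilde\varrho}\setminus B_\delta$ produces the stated bound.
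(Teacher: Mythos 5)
Your proposal is correct and follows essentially the same route as the paper: both arguments rest on the inequality $\delta^1 u(\bar x,z)-\delta^1 u(\bar y,z)\geq m_1\delta^1\psi(\bar x,z)$ coming from the maximality of $(\bar x,\bar y)$, the $(p-1)$-H\"older continuity $|J_p(a)-J_p(b)|\leq C_p|a-b|^{p-1}$ (which the paper proves in two lines via monotonicity of $J_p$), and the Taylor bound $|\delta^1\psi(\bar x,z)|\lesssim |z|^2+|\nabla\psi(\bar x)||z|$. The only cosmetic difference is that the paper substitutes the lower bound inside $J_p$ by monotonicity and then applies the H\"older estimate, whereas you split $\cD_2$ by the sign of $\delta^1 u(\bar x,z)-\delta^1 u(\bar y,z)$; both yield the same bound.
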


\begin{proof}
We start by claiming that
$$
|J_p(a)-J_p(b)|\leq 2 |a-b|^{p-1}\quad a, b\in\R.
$$
In fact, when $a, b$ has the same sign, it is easy to verify. For $a\geq 0\geq b$, we have $a-b\geq a$ and $a-b\geq -b$. Therefore,
$$
J_p(a)-J_p(b)=J_p(a)+J_p(-b)\leq 2J_p(a-b),
$$
using the monotonicity of $J_p$. This proves the claim.

Notice that
$$
I_3 = \int_{\cD_2} \Big{(}J_p(\Theta_1 u(\bar x, z)) - J_p(\Theta_1 u(\bar y, z)) \Big{)} K(z) \dz
$$
Recall that for $|z|\leq \frac{\varrho_2-\varrho_1}{8}$ we have $\Phi(\xbar+z,\ybar+z)\leq\Phi(\xbar,\ybar)$, implying that
$$
\Theta_1 u(\bar y, z) + m_1 \Theta_1 \psi(\xbar, z) \leq \Theta_1 u(\xbar, z),
$$
 where $\Theta_1$ is given by \eqref{theta1}.
Therefore, using the monotonicity of $J_p$, we get
\begin{equation}\label{Rio2}
	\begin{split}
I_3 \geq & \int_{\cD_2} \Big{(}J_p(\Theta_1 u(\bar y, z) + m_1 \Theta_1 \psi(\bar x, z)) - J_p(\Theta_1 u(\bar y, z)) \Big{)} K(z)dz \\
\geq & - 2 m_1^{p-1} \int_{\cD_2} |\Theta_1 \psi(\bar x, z)|^{p-1} K(z)dz.
\end{split}
\end{equation}
Since
$$
|\Theta_1 \psi(\bar x, z)| \leq C |z|^2 + |\nabla \psi(\bar x)| |z|,
$$
we obtain
$$
I_3 \geq -\kappa  \Big{(} \int_{\cD_2} |z|^{2(p-1)} |z|^{-n - sp}dz + |\nabla \psi(\bar x)|^{p-1}\int_{\cD_2} |z|^{p-1} |z|^{-n - sp}dz\Big{)}.
$$
Again, since $\cD_2\subset B_{\tilde\varrho}\setminus B_\delta$, we get
$$
I_3 \geq -\kappa_1 \Big{(} \int_{\delta}^{\tilde \varrho} r^{2(p-1) - sp - 1} dr + |\nabla \psi(\bar x)|^{p-1} \int_{\delta}^{\tilde \varrho} r^{p-1 - sp - 1}\, dr \Big{)},
$$
where $\kappa_1=\kappa_1(n, p, m, m_1)$ is a constant, from which the result follows.
\end{proof}


\subsection{Proof of Theorem~\ref{Tmain-1} in the subquadratic case.} The proof in this case is quite similar to the superquadratic case.
\begin{proof}[Proof Theorem~\ref{Tmain-1} (subquadratic case)]
Let $\gamma<\gamma_\circ(s)$.  We start by considering the doubling function
 $$\Phi(x, y)= u(x)-u(y)-L\varphi_\gamma(|x-y|)- m_1 \psi(x)\quad x, y\in B_2,$$
 where $\psi(x) = [(|x|^2-1)_+]^m, \varphi_\gamma(t)=|t|^\gamma$. Fix $m\geq 3$ and choose $m_1$
 large enough so that $m_1\psi(x)>2$ for all $|x|\geq \frac{3}{2}$.
 From now on we fix this choice of $m$ and $m_1$. Also, let $L$ be large enough to satisfy $L\varphi_\gamma(1/8)>2$.
 We need to show that for some $L$ we have $\Phi\leq 0$ in $B_2\times B_2$.
 Suppose that, for all large $L$, we have $\Phi(\xbar, \ybar)>0$ and
$$\Phi(\xbar, \ybar)=\max_{\bar{B}_2\times \bar{B}_2}\Phi(x, y).$$
As discussed in Theorem~\ref{T1.7}, it can be easily seen that $\xbar\in B_{\frac{3}{2}}$, $\ybar\in B_{\frac{13}{8}}$,
and $|\abar|<\frac{1}{8}$. Also, setting the notation
\begin{gather*}
\cone=\{z\in B_{\delta_0|\abar|}\; :\; |\langle \abar, z\rangle|\geq (1-\delta_0) |\abar||z|\}, \quad \delta_0=\varepsilon,
\quad \delta=\varepsilon_1|\abar|,\\  \cD_1= B_\delta\cap \cone^c, \quad \cD_2=B_{\frac{1}{8}}\setminus (\cD_1\cup\cone),
\end{gather*}
and $\tilde\varrho=\frac{1}{8}$, we arrive at \eqref{E1.4}. Next we estimate the terms $I_i, i=1,2,3,4$ appropriately to contradict~\eqref{E1.4}.

\medskip

For $I_1$, using Lemma~\ref{lemconesub} together with Lemma~\ref{L1.5}, we have
\begin{equation*}
I_1 \geq c  L^{p-1} |\bar a|^{(\gamma - 1)(p-2) + \gamma - 2} \int_{\cone} |z|^{p - n - sp} \dz,
\end{equation*}
and applying the estimate~\eqref{intconeBarles}, we conclude that
\begin{equation}\label{ET2.9A}
I_1\geq C_{\varepsilon} L^{p-1} |\abar|^{\gamma(p-1)-ps}
\end{equation}
for all $L\geq L_0$, where $C_\varepsilon$ depends only on $n,p,s$ and $\varepsilon$.
For $I_2$, we use Lemma~\ref{lemD1sub} to obtain
\begin{equation*}
I_2 \geq -\kappa L^{p-1} \varepsilon_1^{p(1 - s)}|\bar a|^{\gamma(p-1) - ps }, 
\end{equation*}
and therefore, taking $\varepsilon_1$ small enough in terms of $C_\varepsilon$ in \eqref{ET2.9A}, we arrive at
\begin{equation*}
I_1+I_2\geq \frac{C_{\varepsilon}}{2}L^{p-1} |\abar|^{\gamma(p-1)-ps}
\end{equation*}
for all $L\geq L_0(\dataex)$.

For $I_3$, we use Lemma~\ref{lemD2sub}, more specifically the estimate~\eqref{Rio2} together with the fact that $|\Theta_1 \psi(\bar x, z)| \leq C |z|$, to write 
\begin{align*}
I_3 \geq -\kappa  \int_{\cD_2} |z|^{p-1 - n - sp}dz &\geq -\kappa \int_{\cD_2} |z|^{\gamma(p-1) - n - sp}dz
\\
&\geq
-\frac{\kappa_1}{\gamma(p-1) - sp} \varepsilon_1^{\gamma(p-1) - sp} |\bar a|^{\gamma(p-1)- sp}
\\
&\geq -\frac{\kappa_1}{\gamma(p-1) - sp} \varepsilon_1^{\gamma(p-1) - p} |\bar a|^{\gamma(p-1)- sp},
\end{align*}
by using the definition of $\delta$. Thus, gathering the previous estimates together with the estimate for $I_4$ given in Lemma~\ref{lemI4}, we conclude that
\begin{equation*}
C_{\varepsilon} L^{p-1} |\abar|^{\gamma(p-1)-sp} \leq 2C \varepsilon_1^{\gamma(p-1) - p} |\bar a|^{\gamma(p - 1) - sp} + 2(1+\kappa_4)
\end{equation*}
for all $L\geq L_0$.
Since $\gamma(p-1)-sp<0$ and  $|\bar a|$ tends to zero as $L$ tends to infinity, we arrive at a contradiction for large enough $L$. Notice that the previous estimates can be handled without modifications if we pick $\gamma = \gamma_\circ$ when $\gamma_\circ < 1$, from which the second part of the theorem follows.

\medskip

Now we are left with the case $\gamma_\circ=1<\frac{ps}{p-1}$. The proof is quite similar, but this time we consider the doubling function
$$
\Phi(x, y)= u(x)-u(y)-L \varphi(|x-y|)- m_1 \psi(x)\quad x, y\in B_2,
$$
where $\psi(x) = [(|x|^2-1)_+]^m, \varphi(t)=\tilde\varphi(\frac{r_\circ}{3}t)$ and 
$\tilde\varphi$ satisfies \eqref{logchoice}.
From the first part we know that $u\in C^{0, \upkappa}(B_{\frac{3}{2}})$ for all $\upkappa<1$. For our purpose we fix any $\upkappa\in (0, 1)$.
Now suppose that 
$\sup_{B_2\times B_2}\Phi>0$. For large enough $L$ we would have
$|\xbar|\leq \frac{5}{4}$, $|\ybar|\leq \frac{11}{8}$, $|\abar|<\frac{1}{8}$ and 
$$|\nabla\psi(\xbar)|\leq \kappa |\abar|^{\frac{m-1}{m}\upkappa}.$$
Parameters are also set in a similar fashion:
\begin{gather*}
\phi(x, y)=L\varphi(|x-y|)+m_1\psi(x), \quad \delta_0=\varepsilon (\log^2(|\abar|))^{-1}, \quad \varrho=\frac{\frac{n+1}{2}+p-sp}{p-sp},
\\
\delta=\varepsilon_1 |\abar| (\log^{2\rho}(|\abar|))^{-1}, \quad \cone=\{z\in B_{\delta_0|\abar|} \; :\; |\langle \abar, z\rangle| \geq (1-\delta_0)|\abar||z|\},
\\
\cD_1=B_\delta\cap \cone^c\quad \cD_2= B_{\frac{1}{8}}\setminus(\cD_1\cup\cone).
\end{gather*}
In view of Lemmas~\ref{lemconesub},~\ref{lemD1sub}  and Lemma~\ref{L1.5}-(ii), we can mimic the calculations of the superquadratic case to obtain
$$
I_1+I_2\geq \frac{C_{\varepsilon}}{2} L^{p-1}|\abar|^{p-sp-1}(\log^2(|\abar|))^{-\beta}
$$
for all $L\geq L_0$, for a suitable choice of $\varepsilon$ and $\varepsilon_1$.

For $I_3$, we employ Lemma~\ref{lemD2sub} and the estimate for $\nabla \psi(\bar x)$ to write
\begin{align*}
I_3
&\geq - \kappa_1 \left(\int_\delta^1 r^{2(p-1)-1-ps} \, dr +  C_{\varepsilon_1}|\abar|^{p-1-sp} |\abar|^{\frac{m-1}{m}\upkappa(p-1)}(\log^2(|\abar|))^{\rho(sp+1-p)}\right)
\\
&\geq - \kappa_2 \left(\int_\delta^1 r^{2(p-1)-1-ps} \, dr +  C_{\varepsilon_1}|\abar|^{p-1-sp}(\log^2(|\abar|))^{-\beta} \right)
\end{align*}
for some constants $\kappa_1, \kappa_2, C_{\varepsilon_1}$ and
all $L\geq L_0$ (recall that $|\abar|\to 0$ as $L\to \infty$),
where $C_{\varepsilon_1}$ is a constant depending on $\varepsilon_1$.
The other integration can be computed as follows.
If $2(p-1) -ps >0$, then 
$$\int_\delta^1 r^{2(p-1)-1-ps}\, dr<\kappa_3.$$
If $2(p-1) -ps =0$, then
$$\int_\delta^1 r^{2(p-1)-1-ps}\, dr = -\kappa_3\log|\delta| + C_{\varepsilon_1}
\leq \kappa_4 |\abar|^{p-1-sp}(\log^2(|\abar|))^{-\beta} +
C_{\varepsilon_1}.$$
If $2(p-1) -ps < 0$, then
$$\int_\delta^1 r^{2(p-1)-1-ps}\, dr
\leq C_{\varepsilon_1}|\abar|^{2(p-1)-ps} 
(\log^{2\rho}(|\abar|))^{ps-2(p-1)}
\leq  C_{\varepsilon_1}|\abar|^{p-1-sp}(\log^2(|\abar|))^{-\beta}
$$
for all $L$ large ennough. Combining all these calculations we obtain
\begin{equation*}
I_3\geq - C_{\varepsilon_1}( |\abar|^{p-1-sp}(\log^2(|\abar|))^{-\beta} +1),
\end{equation*}
for $L\geq L_0$.
As before, bound of $|I_4|$ follows from Lemma~\ref{lemI4} . Gathering this estimates in \eqref{E1.4}
we have
$$ \left(\frac{C_{\varepsilon}}{2} L^{p-1} - C_{\varepsilon_1}\right)|\abar|^{p-1-sp}(\log^2(|\abar|))^{-\beta}\leq 2 +C_{\varepsilon_1}+ 2\kappa_4$$
for all $L\geq L_0(\data)$. Since $p-1-sp<0$, the above inequality is not possible for large enough $L$. Hence $\Phi\leq 0$ in $B_2\times B_2$ for all large $L$, proving the theorem.
\end{proof}

\section{Extensions}\label{S-other}
We point out that the proofs above only consider test functions with non-vanishing derivatives at the test points. This gives us the advantage of applying a similar technique to a larger class of operators. For instance, our method can be easily implemented for the fractional $(p,q)$-Laplacian of the form $(-\Delta_p)^{s_1}u + (-\Delta_q)^{s_2}u=f$. 
Note that such operators are intrinsically non-homogeneous with respect to spatial scaling, which makes some classical techniques (e.g., De Giorgi's iteration) difficult to apply. This non-homogeneity makes these operators more challenging to investigate compared to the fractional $p$-Laplacian. Some recent development in this direction can be found in \cite{BOS22,BKO23,DFP19,GKS23}, and references therein.
Since a result analogous to Proposition~\ref{Prop1.3} can be easily proved in this case (more specifically, see \cite{FZ23}), the regularity 
of the viscosity solutions implies the regularity of the weak solutions.
Moreover, since our analysis allows us to treat each nonlocal operator separately, the H\"{o}lder regularity index
for this operator is given by $\varpi=\min\{1, \max\{\frac{s_1p}{p-1}, \frac{s_2 q}{q-1}\}\}$. For precision, we present the following result in the superquadratic case, though other cases (that is, other values of
$p, q, s_1, s_2$) can be treated in a similar fashion. In the context of our next result, we note that \cite{GKS23} studied the interior H\"{o}lder regularity for the the case $2\leq q\leq p<\infty$ and $0<s_2\leq s_1<1$
, establishing that $u\in C^{0, \gamma}_{\rm loc}(B_2)$ for every $\gamma<\gamma_\circ(s_1)=\frac{s_1p}{p-1}$. Our next result shows that $u$ is
actually  almost $\varpi$-H\"{o}lder which is more natural given the structure of the operator.
\begin{thm}\label{T2.10}
Suppose that $p, q\geq 2$ and $u\in C(\bar{B}_2)\cap L^{p-1}_{s_1p}(\Rn)\cap L^{q-1}_{s_2 q}(\Rn)$, $s_1, s_2\in (0, 1)$, is a viscosity solution to 
$$-C\leq (-\Delta_p)^{s_1}u + (-\Delta_q)^{s_2}u\leq C\quad \text{in}\; B_2.$$
Then for any $\gamma< \varpi:=\min\{1, \max\{\frac{s_1p}{p-1}, \frac{s_2 q}{q-1}\}\}$ we have $u\in C^{0, \gamma}(B_1)$. Furthermore, if
$$\sup_{B_2}|u| + \int_{\Rn}\frac{|u(z)|^{p-1}}{(1+|z|)^{n+s_1p}}\dz + \int_{\Rn}\frac{|u(z)|^{q-1}}{(1+|z|)^{n+s_2q}}\dz \leq A_0,$$
then the norm $\norm{u}_{C^{0, \gamma}(B_1)}$ is bounded above by a constant dependent on $C, A_0, n, p, q, s_1, s_2$. In addition, when $\max\{\frac{s_1p}{p-1}, \frac{s_2 q}{q-1}\}\neq 1$, 
$u$ is locally $\varpi$-H\"{o}lder.
\end{thm}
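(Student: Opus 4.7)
The plan is to run the Ishii--Lions scheme of Section~\ref{secsuper} operator by operator. Without loss of generality, assume $\frac{s_1 p}{p-1} \geq \frac{s_2 q}{q-1}$, so that $\Theta = \min\{1, \frac{s_1 p}{p-1}\}$ agrees with the threshold $\gamma_\circ(s_1)$ for the leading operator $\sL_1 := (-\Delta_p)^{s_1}$. After a normalization ensuring $\sup_{B_2}|u|$ and the two relevant tails are bounded by $1$, I would reuse verbatim the doubling framework of Subsection~\ref{subsecGS}: introduce $\Phi(x,y) = u(x) - u(y) - L\varphi_\gamma(|x-y|) - m_1 \psi(x)$ for $\gamma < \Theta$, obtain the maximizer $(\bar x, \bar y)$ with $\bar a = \bar x - \bar y$ small, and deduce from Definition~\ref{Def1.1} the two viscosity inequalities for $\sL_1 + \sL_2$ at $\bar x$ (on $w_1$) and $\bar y$ (on $w_2$). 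Subtracting, we arrive at
\begin{equation*}
\bigl( \sL_1 w_1(\bar x) - \sL_1 w_2(\bar y) \bigr) + \bigl( \sL_2 w_1(\bar x) - \sL_2 w_2(\bar y) \bigr) \leq 2,
\end{equation*}
and split each bracket over $\cone$, $\cD_1$, $\cD_2$, $B^c_{\tilde \varrho}$ exactly as in~\eqref{E1.4}, producing terms $I_i^{(1)}, I_i^{(2)}$ for $i=1,\dots,4$.

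The key observation is that Lemmas~\ref{lemcone}--\ref{lemI4} are formulated for a single operator and apply to each $\sL_j$ separately, since $(\bar x, \bar y)$ is the \emph{same} maximizer for both. Applied to $\sL_1$, they yield the dominant positive term $I_1^{(1)} \geq C_\varepsilon L^{p-1} |\bar a|^{\gamma(p-1) - s_1 p}$, which blows up as $L \to \infty$ since $\gamma < \frac{s_1 p}{p-1}$, together with $I_2^{(1)}$ that is absorbed by choosing $\varepsilon_1$ small, precisely as in Proposition~\ref{T1.7}. For $\sL_2$, the contribution $I_1^{(2)}$ is nonnegative by Lemma~\ref{lemcone} (the integrand is positive on $\cone$), and $I_2^{(2)}$ contributes at worst of order $\varepsilon_1^{q(1-s_2)} L^{q-1} |\bar a|^{\gamma(q-1) - s_2 q}$: if $\gamma < \frac{s_2 q}{q-1}$ this is absorbed into $I_1^{(2)}$, while if $\gamma \geq \frac{s_2 q}{q-1}$ the power of $|\bar a|$ is nonnegative, so $|I_2^{(2)}|$ stays bounded as $|\bar a| \to 0$. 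The $I_4$ terms for both operators are controlled by Lemma~\ref{lemI4}.

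The $I_3^{(j)}$ contributions will force a bootstrap. Starting from $\upkappa_0 = 0$, I iterate as in the proof of Theorem~\ref{Tmain-1}, choosing the exponent $m$ in $\psi$ large enough that \emph{both} conditions $\upkappa(p-2) + \frac{m-1}{m}\upkappa + 1 > \gamma(p-1)$ and $\upkappa(q-2) + \frac{m-1}{m}\upkappa + 1 > \gamma(q-1)$ hold at each step (recall $\gamma(q-1) \leq \Theta(q-1) \leq q-1$ so the second condition is no stronger than the first when $q \leq p$, and is handled symmetrically otherwise). Lemma~\ref{lemD2} applied to each operator then gives, at each bootstrap step, an estimate $I_3^{(j)} \geq -C_{\upkappa,\varepsilon_1} |\bar a|^{\gamma(p-1) - s_1 p}$, which is beaten by $I_1^{(1)}$ for $L$ sufficiently large. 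This yields almost $\Theta$-H\"older regularity. For the sharp $\Theta$-H\"older conclusion when $\max\{\frac{s_1 p}{p-1},\frac{s_2 q}{q-1}\} \neq 1$, I would mimic the second part of the proof of Theorem~\ref{Tmain-1}: take $\upkappa < \Theta$ close enough to $\Theta$ so that both exponents $\upkappa(p-2) + 2 - s_1 p$ and $\upkappa(q-2) + 2 - s_2 q$ are strictly positive (for the non-leading operator this is automatic by the assumption $\frac{s_j q}{q-1} < 1$ is not needed, only that the leading good term dominates).

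The main obstacle I foresee is precisely the mixed bookkeeping in the bootstrap: one must simultaneously satisfy the integrability thresholds in both $\sL_1$ and $\sL_2$ while arranging the good term of $\sL_1$ to dominate. This is slightly delicate because the two operators scale differently with $L$ (powers $L^{p-1}$ vs.\ $L^{q-1}$) and with $|\bar a|$, so one must verify that after absorbing $I_2^{(j)}$ into $I_1^{(j)}$ the choice of $\varepsilon_1$ can be made uniformly, and that the $L^{q-1}|\bar a|^{\gamma(q-1) - s_2 q}$ terms never spoil the contradiction obtained from $I_1^{(1)}$; this works because in the regime where the $\sL_2$-exponent is nonnegative, the $\sL_2$ bad terms are automatically bounded, and otherwise the $\sL_2$ good term also blows up.
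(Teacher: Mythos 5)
Your overall architecture is the same as the paper's: split the viscosity inequality operator by operator over $\cone,\cD_1,\cD_2,B^c_{\tilde\varrho}$, let the leading operator $\sL_1$ supply the term that blows up in $L$, and bootstrap the H\"older exponent. However, two of your quantitative claims about the second operator are wrong as stated, and they sit exactly at the point where the theorem improves the exponent from $\min\{\frac{s_1p}{p-1},\frac{s_2q}{q-1}\}$ to the maximum. First, in the regime $\gamma\geq\frac{s_2q}{q-1}$ you argue that $|I_2^{(2)}|$ "stays bounded as $|\bar a|\to 0$" because the power of $|\bar a|$ is nonnegative; but the term is of size $\varepsilon_1^{q(1-s_2)}L^{q-1}|\bar a|^{\gamma(q-1)-s_2q}$ and the factor $L^{q-1}$ diverges, so it is not bounded. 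The correct mechanism (and the one the paper uses) is that the absorption $I_1^{(2)}+I_2^{(2)}\geq\frac{C_\varepsilon}{2}L^{q-1}|\bar a|^{\gamma(q-1)-s_2q}\geq 0$ holds for \emph{either} sign of the exponent once $\varepsilon_1$ is fixed small relative to $C_\varepsilon$, so no case split is needed and no bad $L^{q-1}$ term survives.

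Second, the asserted bound $I_3^{(2)}\geq -C_{\upkappa,\varepsilon_1}|\bar a|^{\gamma(p-1)-s_1p}$ is not what Lemma~\ref{lemD2} produces for $\sL_2$: the natural exponent there is $\gamma(q-1)-s_2q$, and the two exponents are not comparable in general (there is no lower bound on $|\bar a|$ in terms of $L$, so one cannot trade one negative power of $|\bar a|$ for another). When $\gamma<\frac{s_2q}{q-1}$ this is harmless because the bad power $|\bar a|^{\gamma(q-1)-s_2q}$ is matched by the $q$-operator's \emph{own} good term $L^{q-1}|\bar a|^{\gamma(q-1)-s_2q}$, not by $I_1^{(1)}$; this is the paper's first stage, which only reaches $\hat\gamma=\min\{1,\min\{\frac{s_1p}{p-1},\frac{s_2q}{q-1}\}\}$. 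To cross the threshold $\frac{s_2q}{q-1}$ one must instead show that $I_3^{(2)}$ is genuinely bounded, and this requires choosing the previously attained exponent $\upkappa$ in a specific window just below $\frac{s_2q}{q-1}$ with $\upkappa(q-2)+1-qs_2>0$, then taking the new target $\gamma_1$ strictly above $\frac{s_2q}{q-1}$ (so that $\mathcal{J}_1:=I_1^{(2)}+I_2^{(2)}\geq 0$ and $I_3^{(2)}\geq -C_{\varepsilon_1}$ simultaneously), and only afterwards resuming the uniform-increment bootstrap from $\upkappa=\gamma_1$. This single crossing step is the crux of the proof and is missing from your scheme; your uniform increments $\upkappa_i=i\alpha$ will in general land on a value of $\gamma$ in $(\frac{s_2q}{q-1},\frac{s_1p}{p-1})$ for which the available $\upkappa$ from the previous step does not satisfy the integrability condition $\upkappa(q-2)-s_2q+1>0$, and then $I_3^{(2)}$ is not controlled. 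With that step inserted, the rest of your outline (including the simultaneous choice of $m$ and $\varepsilon_1$ for both operators) goes through as in the paper.
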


\begin{proof}
Instead of repeating the whole proof, we just mention the key steps involved in the proof. The main idea is to compute the terms $I_i, i=1,2,3,4,$ (see \eqref{E1.4}) separately for $(-\Delta_p)^{s_1}$
and $(-\Delta_q)^{s_2}$. The first step is to note that the self-improving arguments in the first part of Theorem~\ref{Tmain-1} go as it is for $\gamma< \hat{\gamma}:=\min\{1, \min\{\frac{s_1p}{p-1}, \frac{s_2 q}{q-1}\}\}$,
provided at every step in Proposition~\ref{T1.7} we improve from $\upkappa$ to a H\"older exponent $\gamma<\min\{\hat\gamma, \upkappa+\frac{1}{p-1}, \upkappa+\frac{1}{q-1}\}$. This will give
$u\in C^{0, \gamma}_{\rm loc}(B_2)$ for any $\gamma<\hat\gamma$. Now, without loss of generality, 
we may assume that $\frac{s_1p}{p-1}> \frac{s_2 q}{q-1}$ and $\hat\gamma<1$. Note that, when $\frac{s_1p}{p-1}= \frac{s_2 q}{q-1}$,
we have $\hat\gamma= \varpi$, and we are done. Consider $\upkappa<\hat\gamma$ such that
 $\upkappa< \frac{s_2 q}{q-1}< \min\{\upkappa+\frac{1}{q-1}, \upkappa+\frac{1}{p-1}\}$ and $\upkappa(q-2)+1 -qs_2>0$ (this is possible, since $q\geq 2$, $s_2\in (0, 1)$ and $\upkappa$ can be chosen
 arbitrary close to $\frac{s_2q}{q-1}<1$). By our previous argument we have
$u\in C^{0, \upkappa}(B_{2-\nu})$ for some fixed $\nu>0$. Choose $\gamma_1$ such that $\frac{s_2 q}{q-1}<\gamma_1<\min\{\varpi, \upkappa+\frac{1}{p-1}, \upkappa+\frac{1}{q-1}\}$.
With these choice of $\upkappa$ and $\gamma_1$ we can repeat the argument of Proposition~\ref{T1.7}. The estimates of $I^p_i, i=1,..,4,$ (corresponding to $(-\Delta_p)^{s_1}$) will be exactly the same.
Again, fixing $\varepsilon$ and $\varepsilon_1$ suitably \eqref{ET1.7D} holds for $I^q_1+I^q_2$, giving us $I^q_1+I^q_2\geq 0$. Now, by the choice of $\upkappa$ and $\gamma_1$ and \eqref{new}, we see that
$I^q_3\geq -C_{\varepsilon_1}$. Since $I_4$ is bounded above by $\kappa A_0$, for some $\kappa>0$, combining the estimates in \eqref{E1.4}, corresponding to this operator, we would have 
$$
\left[\frac{C_\varepsilon}{2} L^{p-1}-C_{\varepsilon_1}\right] 
|\abar|^{\gamma_1(p-1)-ps}\leq \kappa_1 + C_{\varepsilon_1}
$$
for some constant $\kappa_1$. This clearly cannot hold for large enough $L$, proving that $u\in C^{0, \gamma_1}(B_{2-2\nu})$. Now, we can bootstrap the regularity, following Proposition~\ref{T1.7}, starting from
$\upkappa=\gamma_1$.  This completes the proof of the first part.

$\varpi$-H\"{o}lder regularity follows by applying an argument similar to Theorem~\ref{Tmain-1}.
\end{proof}

Another related model to which our method extends is the nonlocal double phase equation with a 
H\"{o}lder-continuous modulating coefficient. 
This should be compared with those obtained in the local setting. 
Recall that the classical double phase problem corresponds to the minimization of the energy functional
$$\mathcal{F}(u):=\int_\Omega (|\nabla u|^p + \xi(x)|\nabla u|^q)\dx, \quad 1<p\leq q,$$
over a suitable Sobolev space. Such functionals arise in homogenization theory and are also related to the Lavrentiev phenomenon.
For more details, we refer to the recent survey \cite{MR21}. The regularity of the local minimizer of this functional was studied by Colombo and Mingione \cite{CM15},
 where the authors established $C^{1, \beta}$ regularity 
of the minimizer under the condition $1<\frac{q}{p}\leq 1+\frac{\alpha}{p}$, with $\alpha\in (0, 1]$ denoting the H\"{o}lder exponent of $\xi$.
For the corresponding nonlocal model, we establish H\"{o}lder regularity of the solutions under the condition
 $\frac{q}{p}s_2\leq s_1+\frac{\alpha}{p}$, which reduces to the condition of \cite{CM15} when $s_1=s_2=1$. 
 This specific condition for the fractional double-phase problem was also considered in  \cite{BOS22}. The H\"{o}lder exponent
 of the solution in \cite{DFP19, BOS22} is of a qualitative nature and not explicit, whereas we obtain an explicit  {\it optimal} H\"{o}lder
 exponent, considering the possibility of $\xi\equiv 0$. 

\begin{thm}\label{T2.11}
Suppose that $1<p\leq q<\infty$ and $u\in C(\bar{B}_2)\cap L^{p-1}_{s_1p}(\Rn)\cap L^{q-1}_{s_2 q}(\Rn)$ is a viscosity solution to 
$$
 -C\leq (-\Delta_p)^{s_1}u + \xi(x)(-\Delta_q)^{s_2}u\leq C\quad \text{in}\; B_2.
$$

Suppose that $\xi \in C^{0, \alpha}(B_2)$ is nonnegative. Also, let $p\leq q, s_2q< p s_1 +  \alpha$. Then $u\in C^{0, \gamma}(B_1)$
for any $\gamma<\gamma_\circ(s_1)=\min\{1, \frac{s_1p}{p-1}\}$.  Furthermore, $u\in C^{0, \gamma_\circ(s_1)}(B_1)$, provided
$\frac{s_1p}{p-1}\neq 1$.
\end{thm}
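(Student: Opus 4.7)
\textbf{Proof plan for Theorem~\ref{T2.11}.}
The strategy is to adapt the Ishii-Lions scheme of Proposition~\ref{T1.7}, treating the perturbation $\xi(x)(-\Delta_q)^{s_2}u$ as a lower-order term relative to $(-\Delta_p)^{s_1}u$. Set up the doubling function
\[
\Phi(x,y) = u(x) - u(y) - L\varphi(|x-y|) - m_1 \psi(x),
\]
with $\varphi = \varphi_\gamma$ in the subcritical range and $\varphi = \varphi_c$ for the critical endpoint, localization $\psi$ as before. Assume $\sup \Phi > 0$ at $(\bar x, \bar y)$ with $|\bar a| = |\bar x - \bar y|$ small as $L$ enlarges, and test $u$ against $w_1, w_2$ built from $\phi(x,y) = L\varphi(|x-y|) + m_1\psi(x)$. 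Subtract the two viscosity inequalities to arrive at
\[
\bigl[\sL_p w_1(\bar x) - \sL_p w_2(\bar y)\bigr] + \bigl[\xi(\bar x)\sL_q w_1(\bar x) - \xi(\bar y)\sL_q w_2(\bar y)\bigr] \leq 2C,
\]
where $\sL_p, \sL_q$ denote the $p$- and $q$-fractional Laplacians. Split each bracket according to the regions $\cone, \cD_1, \cD_2, B^c_{\tilde\varrho}$ of Section~\ref{subsecGS}.

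For the first bracket, the estimates of Sections~\ref{secsuper} and~\ref{S-sub} apply verbatim (choosing the same $m, m_1, \varepsilon, \varepsilon_1$ and parameters) and yield, under the bootstrap assumption $u \in C^{0,\upkappa}(B_{\varrho_2})$ for a suitable $\upkappa < \gamma_\circ(s_1)$,
\[
\sL_p w_1(\bar x) - \sL_p w_2(\bar y) \;\geq\; \tfrac{C_\varepsilon}{2} L^{p-1}|\bar a|^{\gamma(p-1)-ps_1} \;-\; C_{\upkappa,\varepsilon_1}|\bar a|^{\gamma(p-1)-ps_1} \;-\; 2\kappa_4.
\]
For the second bracket, the essential algebraic decomposition is
\[
\xi(\bar x)\sL_q w_1(\bar x) - \xi(\bar y)\sL_q w_2(\bar y) = \xi(\bar x)\bigl[\sL_q w_1(\bar x) - \sL_q w_2(\bar y)\bigr] + \bigl[\xi(\bar x) - \xi(\bar y)\bigr]\sL_q w_2(\bar y).
\]
Since $\xi(\bar x) \geq 0$ and the Ishii-Lions structure of Lemmas~\ref{lemcone}--\ref{lemD2} (or their subquadratic analogues) applies to the $q$-difference, the first term is bounded below by $-\|\xi\|_\infty \cdot C_{\upkappa,\varepsilon_1} |\bar a|^{\upkappa(q-1) - qs_2}$ (dropping the favorable cone and $\cD_1$ contributions). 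For the second term, $|\xi(\bar x) - \xi(\bar y)| \leq [\xi]_\alpha |\bar a|^\alpha$, and one estimates $|\sL_q w_2(\bar y)|$ directly by splitting the defining integral over $B_\delta(\bar y)$ (using the explicit smoothness of $-\phi(\bar x, \cdot)$, with bounds on $\delta^2 w_2$ analogous to those of Lemma~\ref{L1.5}) and its complement (using $u \in C^{0,\upkappa}$). This yields
\[
\bigl|\sL_q w_2(\bar y)\bigr| \;\lesssim\; C(L,\varepsilon_1)\,|\bar a|^{\upkappa(q-1)-qs_2} \;+\; \mbox{lower order tail terms},
\]
with the exact power reflecting the worst-case scaling of a $q$-fractional Laplacian on a $C^{0,\upkappa}$ function.

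Gathering all estimates, the $q$-modulated perturbation contributes an error of the form $C(L,[\xi]_\alpha,[u]_\upkappa,\varepsilon_1)|\bar a|^{\alpha + \upkappa(q-1)-qs_2}$, while the $p$-Laplacian gives the main positive term $\tfrac{C_\varepsilon}{2}L^{p-1}|\bar a|^{\gamma(p-1)-ps_1}$. The hypothesis $qs_2 < ps_1 + \alpha$ is exactly what guarantees that, for $\gamma < \gamma_\circ(s_1)$ and $\upkappa$ chosen sufficiently close to $\gamma$ (in particular $\upkappa \geq 0$ suffices initially),
\[
\alpha + \upkappa(q-1) - qs_2 \;>\; \gamma(p-1) - ps_1,
\]
so the $q$-error is absorbed as $|\bar a| \to 0^+$. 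A contradiction with $\sup\Phi > 0$ follows for $L$ large, giving the desired estimate. Bootstrapping $\upkappa$ in increments of $\tfrac12\min\{\gamma, (p-1)^{-1}\}$ up to any $\gamma < \gamma_\circ(s_1)$ completes the subcritical case, while the endpoint $\gamma_\circ(s_1) < 1$ is handled by choosing $\upkappa$ very close to $\gamma_\circ(s_1)$ and running the same computation with $\varphi = \varphi_{\gamma_\circ}$, exactly as in the closing argument of Theorem~\ref{Tmain-1}.

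The main obstacle is the bookkeeping for $|\sL_q w_2(\bar y)|$: the inner-ball contribution mixes the $L$-dependence of $\varphi', \varphi''$ with the $\delta$-power $\delta^{q(1-s_2)}$, and one must verify that the combined factor, multiplied by $|\bar a|^\alpha$, is absorbed not only in the subregion scalings (the cleanly favorable cone/$\cD_1$ blocks) but also in the $\cD_2$ block, where both the $p$- and $q$-Laplacian $I_3$-type terms must simultaneously lose to the positive cone contribution. The sharpness of $qs_2 < ps_1 + \alpha$ reflects this crux precisely.
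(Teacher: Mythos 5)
Your overall architecture (Ishii--Lions doubling, region splitting, bootstrap) matches the paper's, but your treatment of the $\xi$-weighted term diverges at the decomposition step in a way that leaves two genuine gaps. You write the second bracket as $\xi(\bar x)[\sL_q w_1(\bar x)-\sL_q w_2(\bar y)]+[\xi(\bar x)-\xi(\bar y)]\sL_q w_2(\bar y)$, so the H\"older increment of $\xi$ multiplies the \emph{full} $\sL_q w_2(\bar y)$, whose cone and $\cD_1$ contributions carry a factor $L^{q-1}|\bar a|^{\gamma(q-1)-qs_2}$. The resulting error $[\xi]_\alpha L^{q-1}|\bar a|^{\alpha+\gamma(q-1)-qs_2}$ grows in $L$ faster than the main term $L^{p-1}|\bar a|^{\gamma(p-1)-ps_1}$ (since $q\geq p$), so "absorbed as $|\bar a|\to 0$" is not enough: one must invoke the coupling $L\varphi_\gamma(|\bar a|)\leq 2$, i.e.\ $L^{q-p}\lesssim |\bar a|^{-\gamma(q-p)}$, and then check that the residual exponent $\alpha+\gamma(p-1)-qs_2$ beats $\gamma(p-1)-ps_1$, which is exactly $\alpha>qs_2-ps_1$. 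You flag this as "the main obstacle" but do not carry it out. The paper sidesteps it entirely by a different grouping: it attaches $\xi(\bar x)$ to the cone/$\cD_1$ terms of $w_1$ and $\xi(\bar y)$ to those of $w_2$ (both signs favorable since $\xi\geq 0$), so the increment $\xi(\bar x)-\xi(\bar y)$ only ever multiplies the $\cD_2$ integral $\sL_q[\cD_2]w_2(\bar y)$, which is controlled purely by $[u]_\upkappa$ and carries no power of $L$; the needed exponent inequality is then enforced by adding the constraint $\gamma<\upkappa+\frac{\alpha+ps_1-qs_2}{p-1}$ to each bootstrap step (your claim that "$\upkappa\geq 0$ suffices initially" is false: with $\upkappa=0$ you would need $\gamma(p-1)<\alpha+ps_1-qs_2$, which fails for $\gamma$ near $\gamma_\circ(s_1)$).

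The second gap is that you "drop the favorable cone and $\cD_1$ contributions" of the $q$-difference and retain only the $\cD_2$ error $-\|\xi\|_\infty C|\bar a|^{\upkappa(q-1)-qs_2}$. When $\gamma<\frac{qs_2}{q-1}$ (the early stages of the bootstrap whenever $\frac{qs_2}{q-1}<\gamma_\circ(s_1)$, and in particular when $qs_2>ps_1$), this exponent is negative, the term blows up as $|\bar a|\to 0$, and it is \emph{not} dominated by the $p$-main term: the comparison requires $\gamma(q-p)\geq qs_2-ps_1$, which fails for small $\gamma$. The paper's proof keeps the positive $q$-cone terms $\mathcal{J}_1,\mathcal{J}_3\geq \tilde C_\varepsilon L^{q-1}|\bar a|^{\gamma(q-1)-qs_2}$ precisely to absorb this $\cD_2$ error for large $L$; discarding them breaks the argument. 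Relatedly, your proposal does not articulate the two-stage structure of the bootstrap: first up to $\hat\gamma=\min\{1,\frac{ps_1}{p-1},\frac{qs_2}{q-1}\}$ where both cone terms have favorable scaling, and then past $\frac{qs_2}{q-1}$ by choosing $\upkappa$ just below it with $\upkappa(q-2)+1-qs_2>0$ so that the $q$-operator's $\cD_2$ term becomes bounded and its cone terms merely nonnegative. Without this, the passage from $\hat\gamma$ to $\gamma_\circ(s_1)$ in the case $\frac{qs_2}{q-1}<\frac{ps_1}{p-1}$ is unjustified.
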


\begin{proof}
First, suppose $p, q\geq 2$.
Define $\hat\gamma:=\min\{1, \frac{ps_1}{p-1}, \frac{qs_2}{q-1}\}$.
Let us denote by $\sL_p=(-\Delta_p)^{s_1}$ and $\sL_q=(-\Delta_q)^{s_2}$. $\sL_p[D]$ and $\sL_q[D]$ are defined in the same way as in 
\eqref{AB01}. The proof basically follows from the argument of the previous corresponding results. We only provide a sketch for the first part and the rest 
of the proof can be worked out easily following Theorem~\ref{Tmain-1}. Consider $1\leq\varrho_1<\varrho_2\leq 2$, and suppose that
$u\in C^{0, \upkappa}(B_{\varrho_2})$ for some $\upkappa\in [0, \hat\gamma)$. 
Let 
$$\gamma<\min\left\{\hat\gamma, \upkappa+\frac{1}{p-1}, \upkappa+\frac{1}{q-1}, \upkappa + \frac{\alpha+s_1p-s_2q}{p-1}\right\}$$
 From here we follow the notation of Proposition~\ref{T1.7}.
We choose $m$ large enough so that
\begin{gather*}
(p-2)\upkappa + \frac{m-1}{m} \upkappa + 1>\gamma(p-1), \quad (q-2)\upkappa + \frac{m-1}{m} \upkappa + 1>\gamma(q-1).
\end{gather*}
Consider the doubling function
$$\Phi(x, y)=u(x)-u(y)-L\varphi_\gamma(|x-y|)-m_1\psi(x)\quad x, y\in B_2.$$
The parameters $\xbar, \ybar,\abar, \delta,\cone, \cD_1, \cD_2$ are same as in Proposition~\ref{T1.7}. By $I^p_i$ we denote the 
quantities in \eqref{E1.4} corresponding to the operator $\sL_p$. With these notation \eqref{E1.4} becomes
\begin{align*}
& \sum_{i=1}^4 I^p_i + \xi(\xbar)(\sL_q[\cone]w_1(\bar x) +\sL_q[\cD_1]w_1(\bar x) +\sL_q[\cD_2]w_1(\bar x)+\sL_q[B^c_{\tilde\varrho}]w_1(\bar x))
\\
&\quad -\xi(\ybar)(\sL_q[\cone]w_2(\bar y) +\sL_q[\cD_1]w_2(\bar y) +\sL_q[\cD_2]w_2(\bar y)+\sL_q[B^c_{\tilde\varrho}]w_2(\bar y))\leq 2C
\end{align*}
which we re-write as
\begin{align}\label{T2.11A}
& \sum_{i=1}^4 I^p_i  + \xi(\xbar)\underbrace{(\sL_q[\cone]w_1(\bar x) +\sL_q[\cD_1]w_1(\bar x) )}_{\mathcal{J}_1}
+ \xi(\xbar) \underbrace{(\sL_q[\cD_2]w_1(\bar x)-\sL_q[\cD_2]w_2(\bar y))}_{\mathcal{J}_2}\nonumber
\\
&\quad + \xi(\ybar) \underbrace{(-\sL_q[\cone]w_2(\bar y) -\sL_q[\cD_1]w_2(\bar y))}_{\mathcal{J}_3} + 
\underbrace{(\xi(\xbar)-\xi(\ybar))\sL_q[\cD_2]w_2(\bar y)}_{\mathcal{J}_4}\nonumber
\\
&\qquad + \underbrace{\xi(\xbar)\sL_q[B^c_{\tilde\varrho}]w_1(\bar x)-\xi(\ybar)\sL_q[B^c_{\tilde\varrho}]w_2(\bar y)}_{\mathcal{J}_5}\leq 2C.
\end{align}
The first sum, that is, $\sum_{i=1}^4 I^p_i$ can be computed as in Proposition~\ref{T1.7}, giving us
$$ \sum_{i=1}^4 I^p_i\geq \left(\frac{C_\varepsilon}{2} L^{p-1}-\kappa(\varepsilon_1,[u]_\upkappa)\right) |\abar|^{\gamma(p-1)-ps_1} -\kappa_2,$$
where $[u]_\upkappa=[u]_{C^{0, \upkappa}(B_{\varrho_2})}$.
For $\mathcal{J}_1$, following the calculation 
of $I_1$ and $I_2$ in Proposition~\ref{T1.7} we obtain
$$\mathcal{J}_1\geq \tilde{C}_\varepsilon L^{q-1} |\abar|^{\gamma(q-1)-s_2q}$$
for all $L\geq L_0$ and suitably fixed $\varepsilon, \varepsilon_1$.  Same estimate also holds for $\mathcal{J}_3$, due to the symmetry
of $\varphi_\gamma$. Because of our choice of $\gamma$ and the lower bound estimate of $I_3$ in Proposition~\ref{T1.7}, we get
$$
\mathcal{J}_2\geq -\kappa_1(\varepsilon_1, [u]_\upkappa) (1+ |\abar|^{\gamma(q-1)-s_2q}).
$$
Thus, if we choose $L$ large enough, we obtain
$$\mathcal{J}_1+\mathcal{J}_2+\mathcal{J}_3\geq -\kappa_1(\varepsilon_1, [u]_\upkappa).$$
For $\mathcal{J}_4$, we calculate, using the H\"{o}lder continuity of $\xi$,
$$|\mathcal{J}_4|\leq \kappa |\abar|^\alpha (\varepsilon_1|\abar|)^{\upkappa(q-1)-s_2q}\leq \kappa_2(\varepsilon_1,[u]_k)|\abar|^{\gamma(p-1)-s_1p}$$
where we used
$$\gamma(p-1)-s_1p< (p-1)\upkappa + \alpha-s_2q\leq (q-1)\upkappa + \alpha-s_2q.$$
Since $|\mathcal{J}_5|\leq \kappa_3$ for some constant $\kappa_3$, depending on the  $L^{q-1}_{s_2q}$ norm, putting these estimates in
\eqref{T2.11A} gives us
$$\left(\frac{C_\varepsilon}{2} L^{p-1}-\kappa(\varepsilon_1,[u]_\upkappa)-\kappa_2(\varepsilon_1,[u]_k)\right) |\abar|^{\gamma(p-1)-ps_1}
-\kappa_1(\varepsilon_1, [u]_\upkappa)\leq 2C+\kappa_3.
$$
But since $\gamma<\frac{ps_1}{p-1}$ and $|\abar|<1$, the above cannot hold for large enough $L$. Thus, our usual argument in Proposition~\ref{T1.7},
gives that $C^{0, \gamma}(B_{\varrho_1})$. Applying the bootstrapping argument in Theorem~\ref{Tmain-1} gives $u\in C^{0, \gamma}(B_1)$
for any $\gamma<\hat\gamma$. Now if $\hat\gamma=\gamma_\circ(s_1)$ or $\hat\gamma=1$, then we are done. So we consider
the case $\hat\gamma<1$ and $\frac{s_2q}{q-1}<\frac{s_1p}{p-1}$. This is similar to the situation appearing in Theorem~\ref{T2.10}.
Consider $\upkappa<\hat\gamma$ such that
 $$\upkappa< \frac{s_2 q}{q-1}< \min\left\{\upkappa+\frac{1}{q-1}, \upkappa+\frac{1}{p-1}, \upkappa + \frac{\alpha+s_1p-s_2q}{p-1}\right\}$$
 and $\upkappa(q-2)+1 -qs_2>0$, and let
 $$
\frac{s_2 q}{q-1}< \gamma_1< \min\left\{\gamma_\circ(s_1), \upkappa+\frac{1}{p-1}, \upkappa+\frac{1}{q-1}, \upkappa + \frac{\alpha+s_1p-s_2q}{p-1}
\right\}.
 $$
 Because of this choice of $\upkappa$ and $\gamma_1$, $\mathcal{J}_1, \mathcal{J}_3$ become non-negative and 
 $\mathcal{J}_2\geq -C_{\varepsilon_1}$. $|\mathcal{J}_4|$ enjoys a similar bound as above. Therefore, our previous argument goes through, giving us
 a contradiction for large enough $L$. This would give us $C^{0, \gamma_1}$ regularity of $u$ with $\gamma_1>\frac{s_2 q}{q-1}$. Now we can
 bootstrap in a similar fashion starting with $\upkappa=\gamma_1$ to obtain $C^{0, \gamma}$ regularity for any $\gamma<\gamma_\circ$.

Now suppose $1<p<2$ and $q\geq 2$.
For given $\upkappa\geq 0$, we can choose 
$$\gamma<\min\{\hat\gamma, \upkappa+\frac{1}{q-1}, \upkappa + \frac{\alpha+s_1p-s_2q}{p-1}\},$$
to improve the regularity. After arranging the terms as in \eqref{T2.11A}, it is easily seen that
$I_1+I_2+I_3$ dominates $\mathcal{J}_4$, and therefore, the previous argument goes through. This would help us to improve the 
H\"{o}lder regularity to any exponent strictly smaller than $\hat\gamma$. Next, supposing $\hat\gamma<1$ and $\frac{qs_2}{q-1}<\frac{ps_1}{p-1}$, we can again follow the
argument as above to complete the proof.

The proof for the case $1<p\leq q<2$ and the last part of the proof follow in a similar fashion as in Theorem~\ref{Tmain-1}.
\end{proof}

The proofs of Theorems~\ref{T2.10} and ~\ref{T2.11} can be modified to incorporate the following extensions
\begin{itemize}
\item It is easily seen that Theorem~\ref{T2.11} can further be extended for the model
$$(-\Delta_p)^{s_1}u + {\rm PV}\int_{\Rn} |u(x)-u(x+z)|^{q-2}(u(x)-u(x+z))\frac{\xi(x, z)}{|z|^{n+s_2q}}\dz,$$
where $\xi>0$ is $\alpha$-H\"{o}lder in $x$,  uniformly in $z$, $\xi(x, z)=\xi(x, -z)$, and $p, q, s_1, s_2$ satisfies the conditions mentioned in Theorem~\ref{T2.11}.

\item Given nonnegative $\alpha$-H\"{o}lder continuous functions $\xi_1, \xi_2$ satisfying $\xi_1+\xi_2\geq 1$ we can consider
$$
 -C\leq \xi_1(x) (-\Delta_p)^{s_1}u(x) + \xi_2(x)(-\Delta_q)^{s_2}u(x)\leq C\quad \text{in}\; B_2.
$$
It can be easily seen for the above proof that $u\in C^{0, \hat\gamma}(B_1)$, where 
$\hat\gamma=\min\{1, \min\{\frac{s_1p}{p-1},\frac{s_2q}{q-1}\}\}$, provided that $\min\{\frac{s_1p}{p-1},\frac{s_2q}{q-1}\}\neq 1$.
\end{itemize}


\section{Lipschitz regularity in the critical case $sp = p-1$}\label{S-crit}
Now we deal with the case $\gamma_\circ = \frac{sp}{p-1} = 1$, which we refer to as the critical case. 
As we described in the proofs of Theorem~\ref{Main}, we can set our problem in the viscosity sense and provide the  proof of Theorem~\ref{Main2} through the following
\begin{thm}\label{T-3.1}
Let $p\in (1, \infty)$, $s\in (0,1)$ and $\frac{sp}{p-1}=1$. Assume that $k(z):=K(z)|z|^{n+ps}\in C^{0, \tilde\alpha}(\Rn)$
for some $\tilde\alpha>0$.
Let $u\in C(\bar{B}_2)\cap L^{p-1}_{sp}(\Rn)$ be a viscosity solution to
$$ \sL u=f \quad \text{in}\; B_2,$$where $f$ satisfies \eqref{F} in $B_2$, that is,
\begin{equation*}
W:=\sup_{t\in (0, 1]} \omega_{B_2}(t) |\log(t)|^{n+1+2p(1-s)}<\infty.
\end{equation*}

Then $u\in C^{0, 1}(B_1)$ and
$$\norm{u}_{C^{0,1}(B_1)}\leq C_1 \left(\sup_{B_2}|u| +  \tail(u; 0, 2) + W^{\frac{1}{p-1}} \right)$$
for some constant $C_1$, dependent only on $\lambda, \Lambda, n, p, s, \alpha$, and $\tilde\alpha$-H\"{o}lder
norm of $k$. Furthermore, if 
$$-C\leq  \sL u\leq C\quad \text{in}\; B_2,$$
then there exist $\uptheta>0$, dependent on $n, p,s$, and a constant $C_1$ such that 
$$|u(x)-u(y)|\leq C_1 \left(\sup_{B_2}|u| +  \tail(u; 0, 2) + C^{\frac{1}{p-1}} \right) |x-y|(1+|\log|x-y||^{\uptheta})$$
for $x, y\in B_1$.
\end{thm}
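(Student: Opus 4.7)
The plan is to adapt the Ishii--Lions doubling scheme of Subsection~\ref{subsecGS} to the critical exponent $\frac{sp}{p-1}=1$. Since Theorem~\ref{Tmain-1} already supplies $u\in C^{0,\upkappa}(B_{3/2})$ for every $\upkappa<1$ with a quantitative bound, I can start from a fixed $\upkappa<1$ arbitrarily close to $1$; the constant $[u]_{C^{0,\upkappa}(B_{3/2})}$ then enters all subsequent estimates as controlled data. The setup mirrors the Lipschitz branch of Theorem~\ref{Tmain-1}: I would consider the doubling function
\begin{equation*}
\Phi(x,y)=u(x)-u(y)-L\varphi(|x-y|)-m_1\psi(x),
\end{equation*}
with $\psi(x)=[(|x|^2-1)_+]^m$ for $m\ge 3$ large and a profile $\varphi$ to be chosen differently in the two statements. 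Arguing by contradiction against $\sup_{B_2\times B_2}\Phi>0$, I produce an interior maximum $(\bar x,\bar y)$ with $|\bar a|=|\bar x-\bar y|$ small when $L$ is large, and because $\nabla_{x,y}\phi(\bar x,\bar y)\ne 0$ the viscosity inequalities at $\bar x$ and $\bar y$ subtract to
\begin{equation*}
I_1+I_2+I_3+I_4 \le f(\bar x)-f(\bar y),
\end{equation*}
with the splitting of Subsection~\ref{subsecGS}.

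\textbf{Lipschitz case with H\"older $f$.} Here I would use the log-Lipschitz profile $\varphi_c(t)=\tilde\varphi(\tfrac{r_\circ}{3}t)$ from~\eqref{logchoice} and the parameters
\begin{equation*}
\delta_0=\eta_0=\varepsilon(\log^2|\bar a|)^{-1},\qquad \delta=\varepsilon_1|\bar a|(\log^{2\rho}|\bar a|)^{-1},\qquad \rho=\beta=\tfrac{n+3}{2},
\end{equation*}
exactly as in the Lipschitz branch of the proof of Theorem~\ref{Tmain-1}. Lemmas~\ref{lemcone}--\ref{lemconesub} combined with Lemma~\ref{L1.5}(ii) and Lemmas~\ref{lemD1}--\ref{lemD1sub} give
\begin{equation*}
I_1+I_2 \ge \tfrac{C_\varepsilon}{2}\,L^{p-1}(\log^2|\bar a|)^{-\beta},
\end{equation*}
after $\varepsilon_1$ is fixed small in terms of $\varepsilon$; the distinguishing feature is that the factor $|\bar a|^{p-sp-1}$ driving the super-Lipschitz case now equals $1$. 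Lemmas~\ref{lemD2}--\ref{lemD2sub} and the localisation estimate~\eqref{ET1.7G} yield $I_3\ge -C_{\varepsilon_1}$, modulo polylogarithmic-in-$L$ losses that are negligible provided $\upkappa$ is close to $1$ and $m$ is large. Lemma~\ref{lemI4} handles $I_4$. The H\"older regularity of $f$ contributes $f(\bar x)-f(\bar y)\le [f]_\alpha|\bar a|^\alpha\le [f]_\alpha(C/L)^\alpha$, which is dominated by the growing left-hand side. The resulting contradiction forces $\Phi\le 0$ for $L$ large but finite, and the estimate $\tilde\varphi(t)\le t$ yields $u(x)-u(y)\le L\varphi_c(|x-y|)\le cL|x-y|$ on $B_1$.

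\textbf{Log-Lipschitz case with bounded $f$.} When $f\in L^\infty$ the right-hand side collapses to a constant, and the borderline $L^{p-1}(\log^2|\bar a|)^{-\beta}$ lower bound must be boosted. I would replace $\varphi_c$ by $\varphi_\uptheta(t)=t(1+|\log t|^\uptheta)$ for some $\uptheta>0$ depending on $n,p,s$: the derivative $\varphi_\uptheta'(t)\sim (1+|\log t|)^\uptheta$ grows as $|\bar a|\to 0$, which upgrades the $I_1+I_2$ lower bound by an extra $(\log|\bar a|)^{(p-1)\uptheta}$ factor. Choosing $\uptheta$ of order $\tfrac{2\beta}{p-1}$ makes this dominate the polylogarithmic cost in $I_3,I_4$ and the $(\log^2|\bar a|)^{-\beta}$ loss, giving the contradiction for $L$ large and the modulus $u(x)-u(y)\le L|x-y|(1+|\log|x-y||^\uptheta)$.

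\textbf{Main obstacle.} The crux is the bookkeeping of logarithmic factors: at the critical exponent no polynomial gain in $|\bar a|$ survives, and the whole analysis is driven by a competition among logarithms coming from the cone aperture $\delta_0$, the inner radius $\delta$, and the concavity asymptotics of $\tilde\varphi''$ or $\varphi_\uptheta''$ on the cone, together with the losses in $I_3$ from the $\upkappa$-H\"older bootstrap. The H\"older regularity of $f$ provides a cheap polynomial decay $|\bar a|^\alpha$ that neutralises these losses and recovers Lipschitz; without it one is forced to encode logarithmic growth directly in $\varphi$, which inevitably weakens the conclusion to log-Lipschitz. The continuity assumption $k\in C^{\tilde\alpha}$ is used only through~\cite[Lemma~3.8]{KKL19} to guarantee that $\sL\phi$ is continuous and the viscosity testing at $\bar x,\bar y$ is legitimate.
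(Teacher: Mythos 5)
Your overall framework (Ishii--Lions doubling with the log-Lipschitz profile $\varphi_c$ and the logarithmically shrunk cone aperture) matches the paper's, but there is a genuine gap in the central step of the first part. You claim that $I_3\geq -C_{\varepsilon_1}$ and $|I_4|\leq 2\kappa_4$ (constants) suffice because the left-hand side $\tfrac{C_\varepsilon}{2}L^{p-1}(\log^2|\abar|)^{-\beta}$ is ``growing''. It is not: the maximization only gives the upper bound $|\abar|\lesssim L^{-1}$, with no lower bound on $|\abar|$ in terms of $L$, so $(\log^2|\abar|)^{-\beta}$ can be arbitrarily small and $L^{p-1}(\log^2|\abar|)^{-\beta}$ need not exceed any fixed constant. (This is precisely what distinguishes the critical case from $sp>p-1$, where the extra factor $|\abar|^{p-sp-1}$ blows up polynomially and absorbs all $O(1)$ terms.) To close the argument one must show that \emph{every} term on the right of~\eqref{ET3.1B} decays like a positive power of $|\abar|$, not just $f(\xbar)-f(\ybar)$. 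The paper does this in Lemma~\ref{lemD2critical}, which refines the $\cD_2$ estimate by an extra splitting at radius $\tilde\delta=|\abar|^\theta$ (using the $\upkappa$-H\"older bound near the diagonal and $|\delta^1u(\xbar,z)-\delta^1u(\ybar,z)|\leq 2[u]_\upkappa|\abar|^\upkappa$ outside) to get $I_3\geq -C|\abar|^{\theta_1}$, and in Lemma~\ref{lemI4critical}, which shows $|I_4|\leq C|\abar|^{\theta_3}$. The latter is where the hypothesis $k\in C^{\tilde\alpha}$ is used \emph{quantitatively}, to bound $|K(\xbar-z)-K(\ybar-z)|\lesssim|\abar|^{\tilde\alpha}|z-\xbar|^{-n-ps}$; your closing remark that $k\in C^{\tilde\alpha}$ enters only through the continuity of $\sL\phi$ is therefore incorrect, and omitting these two lemmas leaves the contradiction unobtainable.

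For the second part your route genuinely differs from the paper's: you propose encoding the logarithmic correction in the profile $\varphi_\uptheta(t)=t(1+|\log t|^{\uptheta})$ so that the cone estimate gains a factor $|\log|\abar||^{\uptheta(p-1)}$ cancelling the $(\log^2|\abar|)^{-\beta}$ loss, yielding $I_1+I_2\gtrsim L^{p-1}$ against which $O(1)$ bounds on $I_3,I_4$ do suffice. This is plausible and would directly give the log-Lipschitz modulus, but it requires reproving Lemma~\ref{L1.5}(ii) for $\varphi_\uptheta$ (two-sided bounds on $\delta^2\phi$ on the cone with aperture $\delta_0\sim|\log|\abar||^{-1}$), which you do not carry out. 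The paper instead keeps $\varphi_c$, extracts from the contradiction inequality the quantitative smallness $|\abar|\leq e^{-\vartheta_1L^{\vartheta_2}}$, and then optimizes the resulting two-parameter bound $u(x)-u(y)\leq 2L|x-y|+C_\upkappa e^{-\vartheta_1\upkappa L^{\vartheta_2}}$ over $L$; both approaches should produce $\uptheta$ comparable to $2\beta/(p-1)$.
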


The broad idea of the proof remains same as Theorem~\ref{Tmain-1}. The key difference comes from the careful choice of regularizing function $\varphi$ and the 
calculations associated to it. Let $M=\sup_{B_2} |u| +  \tail(u; 0, 2) + W^{\frac{1}{p-1}} $ .
Replacing $u$ by $u/M$, it is enough to consider the following situation
\begin{equation}\label{ET3.1A}
\begin{split}
	\sL u=f & \quad \text{in}\; B_2,
	\\
	\sup_{B_2}|u| +  \tail(u; 0, 2) &\leq 1,
	\\
	\sup_{t\in (0, 1]} \omega_{B_2}(t) |\log(t)|^{n+1+2p(1-s)}&\leq 1.
	\end{split}
\end{equation}
Let $\tilde\varphi(t)=t+\frac{t}{\log t}$ and 
the regularizing function is defined as $\varphi_c (t)=\tilde\varphi (\frac{r_\circ}{3} t)$, see \eqref{logchoice}. By Theorem~\ref{Tmain-1} we know that $u\in C^{\upkappa}(B_{\frac{7}{4}})$ for all $\upkappa\in (0, 1)$. We fix
	$\upkappa\in (0, 1)$ so that $\upkappa + \upkappa(p-2) + 1 -ps>0$ for $p\geq 2$. This is possible since $p-1=sp$.
	As in Theorem~\ref{Tmain-1}, we define the doubling
	function
	\begin{equation*}
		\Phi(x, y)= u(x)-u(y)-L\varphi_c(|x-y|)- m_1 \psi(x)\quad x, y\in B_2,
	\end{equation*}
	where $\psi(x) = [(|x|^2-\varrho^2_1)_+]^{m}$. Fix $m\geq 3$ large enough so that 
	$$\frac{m-1}{m}\upkappa + \upkappa(p-2) + 1 -ps>0.$$
	Also, set $m_1$ large enough so that $m_1 \psi(x)> 2$ for $|x|\geq 3/2$. We claim that there exists $L$, independent of $u$, such that
	$\Phi\leq 0$ in $B_2\times B_2$. Suppose, to the contrary, that $\sup_{B_2\times B_2}\Phi>0$ for all $L$ large enough. Set $L$ large enough such that $L\varphi_c(\frac{1}{16})> 2$. 
	By our choice of $m_1$, we find $|\xbar|<\frac{3}{2}$, $|\ybar|\leq \frac{3}{2}+\frac{1}{16}$, $|\abar|=|\xbar-\ybar|\leq \frac{1}{16}$ satisfying 
\begin{equation}\label{new-xy}
\sup_{B_2\times B_2}\Phi=\Phi(\xbar,\ybar)>0.
\end{equation}
As before, we denote
\begin{equation*}
\begin{gathered}
\phi(x, y)=L\varphi_c(|x-y|)+m_1\psi(x), \quad \delta_0=\varepsilon (\log^2(|\abar|))^{-1}, \quad \rho=\frac{\frac{n+1}{2}+p-sp}{p-sp}=\frac{n+3}{2},
\\
\delta=\varepsilon_1 |\abar| (\log^{2\varrho}(|\abar|))^{-1}, \quad \cone=\{z\in B_{\delta_0|\abar|} \; :\; |\langle \abar, z\rangle| \geq (1-\delta_0)|\abar||z|\},
\\
\cD_1=B_\delta\cap \cone^c,\quad \cD_2= B_{\frac{1}{8}}\setminus(\cD_1\cup\cone),
     \quad \beta = \frac{n+1}{2}  + p-sp.
\end{gathered}
\end{equation*}
	
This time we will require some regularity of the function $f$, from which inquality \eqref{E1.4} has the specific structure
\begin{align}\label{ET3.1B}
		I_1 + I_2 + I_3 + I_4 \leq 
f(\xbar)-f(\ybar)\leq \omega_{B_2}(|\abar|)\leq \frac{1}{(\log^2(|\abar|))^\beta}
\end{align}
by \eqref{ET3.1A}, where $w_1, w_2$ are defined as before. We estimate $I_i, i=1,2,3,4,$ suitably to reach a contradiction.

We  require further estimates for the contributions of $I_3$ and $I_4$ in the Ishii-Lions method. In view of the previous theorems, 
We fix $\upkappa$ as chosen above.
\begin{lem}[Estimate on $\cD_2$]\label{lemD2critical}
Let $1 < p < \infty$ and $sp=p-1$.
Assume that $(\bar x, \bar y)$ satisfies~\eqref{new-xy} and  $u \in C^{0, \upkappa}(B_{\frac{7}{4}})$ for some $\upkappa\in (0, 1)$ satisfying $\upkappa + \upkappa(p-2) + 1-sp>0$. Then, there exist $C > 0$ and $\theta_1=\theta_1(p, s, \upkappa) \in (0,1)$ such that
\begin{equation*}
I_3 \geq - C |\bar a|^{\theta_1}
\end{equation*}
for all $L\geq L_0=L_0(p, s, \upkappa)$, where $C$ depends on $[u]_\upkappa, \data$ and
$\varepsilon_1$ (see Lemma~\ref{lemD1} and ~\ref{lemD1sub}).
\end{lem}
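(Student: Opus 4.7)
The plan is to adapt the strategies of Lemmas~\ref{lemD2} and~\ref{lemD2sub} to the critical regime $sp=p-1$, and to upgrade the resulting bounds to a genuine power $|\bar a|^{\theta_1}$ by exploiting the H\"older continuity of the modulation $k(z)=K(z)|z|^{n+ps}$. As before, I would begin from the identity $I_3 = (1-s)(p-1)\int_{\cD_2}\int_0^1 |\delta^1 u(\bar y, z) + t(\delta^1 u(\bar x, z) - \delta^1 u(\bar y, z))|^{p-2}(\delta^1 u(\bar x, z) - \delta^1 u(\bar y, z))\, dt\, K(z)\, dz$, combine the doubling inequality $\delta^1 u(\bar x, z) - \delta^1 u(\bar y, z) \geq m_1\delta^1\psi(\bar x, z)$ with the monotonicity of $J_p$ to one-side the integrand, and split the resulting estimate using $|\delta^1\psi(\bar x, z)| \leq C(|z|^2 + |\nabla\psi(\bar x)||z|)$. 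The contribution carrying $|\nabla\psi(\bar x)|$ is handled by the doubling bound $|\nabla\psi(\bar x)| \leq C|\bar a|^{\upkappa(m-1)/m}$ and the radial integral $\int_\delta^{\tilde\varrho} r^{\upkappa(p-2)-sp}\, dr$; the hypothesis $\upkappa+\upkappa(p-2)+1-ps>0$ together with $m$ sufficiently large yields a bound of the form $C|\bar a|^\theta$ with $\theta>0$, exactly as in the super- and sub-quadratic cases.

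The truly critical step is the $|z|^2$ contribution, for which the radial integral $\int_\delta^{\tilde\varrho} r^{-(1-\upkappa)(p-2)}\, dr$ is merely bounded by a universal constant and does not tend to zero with $|\bar a|$. To squeeze out a positive power, I would perform the change of variable $z=w+\bar a$ in $\sL[\cD_2]u(\bar y)$, which re-centers the integration at $\bar x$ and turns the integrand into $J_p(\delta^1 u(\bar x, w) - U)K(w+\bar a)$ with $U:=u(\bar x)-u(\bar y)>0$. Decomposing the integrand of $I_3$ into the sum of the monotone shift $[J_p(\delta^1 u(\bar x, z)) - J_p(\delta^1 u(\bar x, z) - U)]K(z)$ and the kernel-difference term $J_p(\delta^1 u(\bar x, z)-U)[K(z) - K(z+\bar a)]$ extracts a first piece that is nonnegative (by $U>0$ and monotonicity of $J_p$), and a second piece that, by the H\"older hypothesis on $k$, satisfies the pointwise estimate $|K(z) - K(z+\bar a)| \leq C|\bar a|^{\tilde\alpha}|z|^{-n-ps}$ on $\{|z|\geq 2|\bar a|\}$; combined with $|J_p(\delta^1 u(\bar x, z)-U)| \leq C(|z|^{\upkappa(p-1)} + U^{p-1})$, this produces the desired $|\bar a|^{\tilde\alpha}$ factor in the far field.

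What remains is the near field $\cD_2\cap\{|z|\leq 2|\bar a|\}$ together with the thin boundary layer $\cD_2 \triangle (\cD_2 - \bar a)$ produced by the change of variable. On both regions the H\"older estimate on $K$ is not sharp, and this is the main obstacle in the argument. The workaround is that these regions have measure comparable to a power of $|\bar a|$: the pointwise bound $|J_p| \leq C|z|^{\upkappa(p-1)}$ and the equality $sp=p-1$ reduce them to an integral of the form $\int_{|z|\leq C|\bar a|}|z|^{\upkappa(p-1)-n-ps}\, dz$, which is of order $|\bar a|^{1+\upkappa(p-1)-ps}$, a positive power of $|\bar a|$ precisely under the standing hypothesis on $\upkappa$. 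Collecting all contributions with $\theta_1 := \min\{\tilde\alpha,\, \upkappa(m-1)/m-(1-\upkappa)(p-2),\, 1+\upkappa(p-1)-ps\} \in (0,1)$ gives the claim. The sub-quadratic case $1<p<2$ follows the same plan, replacing the Lipschitz-on-bounded-sets property of $J_p$ by its $(p-1)$-H\"older property as in Lemma~\ref{lemD2sub}.
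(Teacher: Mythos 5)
Your overall architecture (split off the $|\nabla\psi(\bar x)|$ contribution, then fight for a positive power of $|\abar|$ in the remaining piece) is reasonable, but the step you yourself flag as ``truly critical'' contains a computational error that breaks the proof. In the near field $\cD_2\cap B_{C|\abar|}$ you discard all cancellation and bound the integrand by $|z|^{\upkappa(p-1)}K(z)\leq C|z|^{\upkappa(p-1)-n-ps}$, claiming the resulting integral is of order $|\abar|^{1+\upkappa(p-1)-ps}$. In polar coordinates, however, $\int_{B_{C|\abar|}}|z|^{\upkappa(p-1)-n-ps}\dz=c_n\int_0^{C|\abar|}r^{\upkappa(p-1)-ps-1}\dr$, and since $sp=p-1$ and $\upkappa<1$ the exponent equals $-(1-\upkappa)(p-1)-1<-1$: the integral diverges at the origin, and restricted to $\cD_2$ (which only excludes $|z|<\delta$) it is of order $\delta^{\upkappa(p-1)-ps}=\delta^{-(1-\upkappa)(p-1)}$, a \emph{negative} power of $|\abar|$, since $\delta\approx\varepsilon_1|\abar|$ up to logarithms. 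The hypothesis $\upkappa(p-1)+1-ps>0$ makes your claimed exponent positive but does not make the integral converge; you have miscounted by one power of $r$. A secondary problem of the same nature sits in your far field: on $\{|z|\geq2|\abar|\}$ the correct kernel bound is $|K(z)-K(z+\abar)|\leq C(|\abar|^{\tilde\alpha}+|\abar|\,|z|^{-1})|z|^{-n-ps}$, and the $|\abar|\,|z|^{-1}$ part, integrated against $|z|^{\upkappa(p-1)}$ down to $|z|\sim|\abar|$, again produces $|\abar|^{-(1-\upkappa)(p-1)}$. Raising the inner cutoff of the far field to $|\abar|^\theta$ would cure the far field but enlarges the problematic near field.

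The paper avoids both issues and never uses the regularity of $K$ in this lemma (that hypothesis is only needed for $I_4$). It splits $\cD_2$ at the radius $\tilde\delta=|\abar|^\theta$. On $\cD_2\cap B_{\tilde\delta}$ it keeps the doubling cancellation of Lemmas~\ref{lemD2} and~\ref{lemD2sub}, so the integrand carries $|z|^2+|\nabla\psi(\bar x)||z|$ rather than $|z|^{\upkappa}$; the $|z|^2$ piece then gives the convergent integral $\int_\delta^{\tilde\delta}r^{\upkappa(p-2)-sp+1}\dr\lesssim\tilde\delta^{\upkappa(p-2)-sp+2}=|\abar|^{\theta(\upkappa(p-2)-sp+2)}$, a positive power because the standing hypothesis on $\upkappa$ forces $\upkappa(p-2)-sp+2>0$. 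On $\cD_2\cap B_{\tilde\delta}^c$ it uses $|\delta^1u(\bar x,z)-\delta^1u(\bar y,z)|\leq2[u]_\upkappa|\abar|^\upkappa$, paying only a factor $\tilde\delta^{\upkappa(p-2)-ps}$ (resp.\ $\tilde\delta^{-ps}$ for $p<2$), which is absorbed by taking $\theta$ small. You need some version of this cancellation in your near field; the crude pointwise bound cannot work there.
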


\begin{proof}
Consider $\theta \in (0,1)$ and denote $\tilde \delta = |\bar a|^\theta$. For $L$ large enough we would have $\delta<\tilde\delta$.
We perform the following extra splitting of $I_3$:
\begin{equation}\label{ET3.1F}
		I_3=\underbrace{\sL[\cD_2\cap B_{\tilde\delta}] w_1(\bar x)-\sL[\cD_2\cap B_{\tilde\delta}] w_2(\bar y)}_{=I_{1,3}}
		+ \underbrace{\sL[\cD_2\cap B^c_{\tilde\delta}] w_1(\bar x)-\sL[\cD_2\cap B^c_{\tilde\delta}] w_2(\bar y)}_{=I_{2,3}}
	\end{equation}
    
First we let $p\geq 2$. In this case the estimate of $I_{1,3}$ is exactly the same as in Lemmas~\ref{lemD2},~\ref{lemD2sub}, but replacing $\tilde \varrho$ by $\tilde \delta$. 
More precisely, we write
	\begin{equation*}
	I_{1,3} \geq -C [u]_\upkappa^{p-2} \Big{(} \int_{\delta}^{\tilde \delta} r^{\upkappa(p - 2) - sp + 1} dr + |\nabla \psi(\bar x)| \int_{\delta}^{\tilde \delta} r^{\upkappa(p-2) -sp} dr \Big{)}.
	\end{equation*}
By our assumption on $\upkappa$ we have $\upkappa(p-2) -sp + 2 > 0$, leading to
	\begin{equation*}
	I_{1,3} \geq -C [u]_\upkappa^{p-2} \Big{(} |\bar a|^{\theta(\upkappa(p - 2) - sp + 2)} + |\nabla \psi(\bar x)| \int_{\delta}^{\tilde \delta} r^{\upkappa(p-2) -sp} dr \Big{)}.
	\end{equation*}
On the other hand, if $p > 2$, we  have $\upkappa(p - 2) - sp < - 1$; and, if $p=2$, then $\upkappa(p - 2) - sp = -1$. Thus, we have
$$
\int_{\delta}^{\tilde \delta} r^{\upkappa(p-2) - sp}dr \leq \left \{ \begin{array}{cl} 
  (\varepsilon_1\log^{-2\rho}(|\abar|)^{\upkappa(p-2)-sp+1}|\bar a|^{\upkappa(p-2) - sp + 1} \quad & \mbox{if} \ p > 2, 
 \\ 
\log(\varepsilon^{-1}_1|\bar a|^{\theta-1}\log^{2\rho}(|\abar| ))\quad & \mbox{if} \ p = 2 . \end{array} \right .
$$
Since $|\nabla\psi(\xbar)|\leq \kappa |\abar|^{\frac{m-1}{m}\upkappa}$, there exists $\theta_\upkappa > 0$ such that
$$
I_{1,3} \geq - C (\varepsilon_1, [u]_\upkappa, m, m_1) |\bar a|^{\theta_\upkappa}.
$$
 for all $L\geq L_0$.
Notice that by a slight modification of the previous argument allows us to take $\theta_\upkappa$ independent of $p \geq 2$.

	To calculate $I_{2,3}$, we go one step back in the proof of Lemma~\ref{lemD2}. More specifically, using that
	\begin{align*}
	|\Theta_1 u(\bar x, z)|, |\Theta_1 u(\bar x,z)| & \leq [u]_\upkappa |z|^{\upkappa}, \\
	|\Theta_1 u(\bar x, z) - \Theta_1 u(\bar y, z)| & \leq 2 [u]_\upkappa |\bar a|^\upkappa,
	\end{align*}
	and replacing them into~\eqref{I3superback}, we get
%
%
	\begin{align*}
		I_{2,3} &\geq -\kappa |\abar|^{\upkappa}\int_{B_{\frac{1}{8}}\setminus B_{\tilde\delta}} |z|^{\upkappa(p-2) -n-ps}\dz
		\\
		&\geq -\kappa |\abar|^{\upkappa} (\tilde\delta)^{\upkappa(p-2)-ps} =\kappa |\abar|^{\upkappa+\theta(\upkappa(p-2)-ps)}.
	\end{align*}
Taking $\theta$ small enough so that 
    $\upkappa+\theta(\upkappa(p-2)-ps)>0$, and then  gathering 
    the estimates of $I_{1,3}, I_{1,2}$, we conclude the result for the superquadratic case. 
	
	\medskip

Now consider $p\in(1,2)$. We use the same splitting in~\eqref{ET3.1F}, with $\tilde \delta = |\bar a|^\theta$ for some $\theta$ small. 
For $I_{1,3}$, using the same arguments as in Lemma~\ref{lemD2sub}, we have
	\begin{align*}
		I_{1,3} &\geq - \kappa \Big{(} \int_{\delta}^{\tilde \delta} r^{2(p-1) - sp - 1}dr + |\nabla \psi(\bar x)|^{p - 1} \int_{\delta}^{\tilde \delta} r^{-1} dr \Big{)}
        \\
        &\geq - \kappa (|\bar a|^{\theta(p-1)} + |\bar a|^{(p-1) \upkappa \frac{m-1}{m}} \log (\varepsilon^{-1}_1|\bar a|^{\theta-1}\log^{2\rho}(|\abar|)),
	\end{align*}
	implying, the existence of a $\theta_2 \in (0,1)$  such that
	$$
	I_{1,3} \geq -C_{\varepsilon_1} |\bar a|^{\theta_2}.
	$$
For $I_{2,3}$, we need a little extra care. Using
	\begin{align*}
	I_{2,3} \geq & -\int_{\cD_2 \cap B_{\tilde \delta}^c} |J_p(\Theta_1 u(\bar x, z)) - J_p(\Theta_1 u(\bar y, z))|K(z)dz,
	\end{align*}
	together with the fact that for $z \in \cD_2$ we have
%
$$ |J_p(\Theta_1 u(\bar x, z)) - J_p(\Theta_1 u(\bar y, z))|\leq 2 |u(\ybar) - u(\bar x) - (u(\ybar+z)-u(\xbar+z))|^{p-1}\leq \kappa |\abar|^{\upkappa(p-1)},$$
we arrive at
\begin{align*}
I_{2,3} &\geq -\kappa |\abar|^{\upkappa(p-1)}\int_{B_{\frac{1}{8}}\setminus B_{\tilde\delta}} |z|^{-n-ps}\dz
\\
&\geq -\kappa |\abar|^{\upkappa(p-1)} (\tilde\delta)^{-ps} =\kappa |\abar|^{\upkappa(p-1)-\theta ps}.
\end{align*}
Thus, by taking $\theta$ small
so that $\upkappa(p-1)-\theta sp>0$, and redefining $\theta_2$ smaller, if required, we obtain
$$
I_{2,3} \geq -\kappa |\bar a|^{\theta_2}.
$$
Combining the above estimates, we complete the proof.
\end{proof}

We now estimate $I_4$.
\begin{lem}\label{lemI4critical}
Let $1 < p < \infty$. Assume that $(\bar x, \bar y)$ satisfies~\eqref{new-xy}, and  $u \in C^{0, \upkappa}(B_{\frac{7}{4}})$ for some $\upkappa > 0$. 
Then, there exist $C > 0$ and $\theta_3=\theta_3(\upkappa, \tilde\alpha) \in (0,1)$ such that
$$
|I_4| \leq C|\bar a|^{\theta_3}.
$$
\end{lem}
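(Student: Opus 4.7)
The starting point is that $w_1 = u$ and $w_2 = u$ outside $B_\delta(\xbar)$ and $B_\delta(\ybar)$ respectively, with $\delta \ll \tilde\varrho = \frac{1}{8}$, so after the standard constant adjustment,
$$
I_4 = (1-s)\int_{B^c_{\tilde\varrho}} \bigl[J_p(u(\xbar) - u(\xbar+z)) - J_p(u(\ybar) - u(\ybar+z))\bigr] K(z)\dz.
$$
The plan is to split $B^c_{\tilde\varrho} = (B_\mu \setminus B_{\tilde\varrho}) \cup B_\mu^c$, choosing $\mu \in (\tilde\varrho,\,3/16)$ small enough that $\xbar+z,\ybar+z \in B_{7/4}$ for every $|z|\leq \mu$. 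On the annular piece $B_\mu\setminus B_{\tilde\varrho}$, both $u(\xbar+z)$ and $u(\ybar+z)$ lie in $B_{7/4}$, so the $\upkappa$-H\"older regularity of $u$ there (provided by the first part of Theorem~\ref{Tmain-1}) yields $|(u(\xbar)-u(\xbar+z))-(u(\ybar)-u(\ybar+z))| \leq 2[u]_\upkappa |\abar|^\upkappa$. Plugging this into the elementary inequality $|J_p(a)-J_p(b)|\leq c_p(|a|+|b|)^{p-2}|a-b|$ (for $p\geq 2$, via \eqref{intJp} and \eqref{EL1.6A}) or $|J_p(a)-J_p(b)|\leq 2|a-b|^{p-1}$ (for $1<p<2$, cf.\ the proof of Lemma~\ref{lemD2sub}), and using that $K$ and $u$ are bounded here, this piece contributes at most $O(|\abar|^{\upkappa\wedge \upkappa(p-1)})$.

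The subtler region is $B_\mu^c$, which I will handle by translation. In the second integral I substitute $z = w + \abar$, so that $\ybar+z = \xbar+w$ and $K(z) = K(w+\abar)$; the resulting domain $\{|w+\abar|>\mu\}$ differs from $\{|w|>\mu\}$ only on a thin annulus of Lebesgue measure $O(|\abar|)$. Since on this annulus $\xbar+w$ remains in $B_{7/4}$ and $K$ is bounded, the integrand is uniformly controlled there, contributing $O(|\abar|)$. On the common region $\{|w|>\mu\}\cap\{|w+\abar|>\mu\}$, I will decompose the integrand difference as
$$
\bigl[J_p(u(\xbar)-u(\xbar+w))-J_p(u(\ybar)-u(\xbar+w))\bigr]K(w) + J_p(u(\ybar)-u(\xbar+w))\bigl[K(w)-K(w+\abar)\bigr].
$$
The first bracket is handled exactly as in the annular case, using $|u(\xbar)-u(\ybar)|\leq [u]_\upkappa|\abar|^\upkappa$; its integrability on $\{|w|>\mu\}$ is guaranteed by $u\in L^{p-1}_{sp}(\Rn)$.

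The main obstacle is the second bracket, which requires quantitative control of the kernel difference $K(w) - K(w+\abar)$. This is precisely where the hypothesis $k(z):=K(z)|z|^{n+ps}\in C^{\tilde\alpha}$ enters: writing $K(z)=k(z)/|z|^{n+ps}$, applying the $\tilde\alpha$-H\"older bound on the numerator, and the mean value theorem on $|z|^{-n-ps}$, I expect
$$
|K(w) - K(w+\abar)| \leq C\,\frac{|\abar|^{\tilde\alpha\wedge 1}}{|w|^{n+ps}} \quad \text{whenever } |w|\geq \mu - |\abar|.
$$
Combining this with $|J_p(u(\ybar)-u(\xbar+w))|\leq C(1+|u(\xbar+w)|^{p-1})$ and $u\in L^{p-1}_{sp}(\Rn)$ yields a contribution of order $|\abar|^{\tilde\alpha\wedge 1}$. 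Collecting all three terms gives the lemma with $\theta_3 = \min\{\upkappa,\upkappa(p-1),\tilde\alpha,1\}$. The delicate point is to organize the translation so that the thin-annulus error and the singular kernel difference term both match the tail decay of $u$ in $L^{p-1}_{sp}(\Rn)$; modulo that, the argument reduces to the standard algebraic inequalities for $J_p$ already employed earlier in the paper.
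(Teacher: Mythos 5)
Your proposal is correct and, after the change of variables $z=w+\bar a$, reproduces exactly the paper's decomposition of $I_4$ into three pieces: a kernel-difference term controlled by the $\tilde\alpha$-H\"older continuity of $k$ together with the Lipschitz behaviour of $|w|^{-n-ps}$ away from the origin, a thin symmetric-difference-of-domains term of measure $O(|\bar a|)$, and a $J_p$-difference term estimated via $|u(\bar x)-u(\bar y)|\leq [u]_\upkappa|\bar a|^{\upkappa}$ (using $(|a|+|b|)^{p-2}|a-b|$ for $p\geq 2$, where the integrability of $(1+|u|)^{p-2}$ against the kernel tail follows from $L^{p-1}_{sp}$ via H\"older's inequality as in the paper's $\mathcal J_3$, and $2|a-b|^{p-1}$ for $p<2$), yielding the same exponent $\theta_3=\min\{\tilde\alpha,\upkappa,\upkappa(p-1),1\}$. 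The extra intermediate annulus $B_\mu\setminus B_{\tilde\varrho}$ you introduce is harmless but unnecessary, since the translation argument already works on all of $B_{\tilde\varrho}^c$.
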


\begin{proof}
Estimation of $I_4$ becomes more delicate than the one done in Proposition~\ref{T1.7}. We write
	\begin{align}\label{ET3.1H}
		|I_4| &\leq \underbrace{\left|\int_{|\xbar-z|\geq \frac{1}{8}} J_p(u(\xbar)-u(z)) |K(\xbar-z)-K(\ybar-z)| \dz\right|}_{=\cJ_1}\nonumber
		\\
		&\quad + \underbrace{\left|\int_{|\xbar-z|\geq \frac{1}{8}} J_p(u(\xbar)-u(z)) K(\ybar-z) \dz- \int_{|\ybar-z|\geq \frac{1}{8}} J_p(u(\xbar)-u(z)) K(\ybar-z) \dz\right|}_{=\cJ_2}
		\nonumber
		\\
		&\qquad + \underbrace{\left|\int_{|\ybar-z|\geq \frac{1}{8}} (J_p(u(\xbar)-u(z)) - J_p(u(\ybar)-u(z)))K(\ybar-z) \dz\right|}_{=\cJ_3}.
	\end{align}
	Since $|\xbar-\ybar|=|\abar|\leq \frac{1}{16}$, we get $|\xbar-z|\geq \frac{1}{8}\Rightarrow |\ybar-z|\geq \frac{1}{16}$. Therefore, using
	the H\"{o}lder continuity of $k$, we get for $|x-z|\geq\frac{1}{8}$ that
	\begin{align*}
		|K(\xbar-z)-K(\ybar-z)| &\leq |z-\xbar|^{-n-ps} |k(\xbar-z)-k(\ybar-z)|+ k(\ybar-z) ||z-\xbar|^{-n-ps}-|z-\ybar|^{-n-ps}|
		\\
		&\leq \kappa \left(|z-\xbar|^{-n-ps} |\abar|^{\tilde\alpha} + |z-\xbar|^{-n-1-ps}|\abar|\right)
		\\
		&\leq \kappa |\abar|^{\tilde\alpha} |z-\xbar|^{-n-ps}.
	\end{align*}
	By \eqref{ET3.1A} we then obtain $\cJ_1\leq \kappa_1 |\abar|^{\tilde\alpha}$. To estimate $\cJ_2$, we first note that
	we only need to calculate the integration on the set  
	\begin{align*}
		&(\{|\xbar-z|\geq \frac{1}{8}\}\setminus\{|\ybar-z|\geq \frac{1}{8}\})\cup (\{|\ybar-z|\geq \frac{1}{8}\}\setminus\{|\xbar-z|\geq \frac{1}{8}\})
		\\
		&\quad = (\{|\xbar-z|\leq \frac{1}{8}\}\setminus\{|\ybar-z|\leq \frac{1}{8}\})\cup (\{|\ybar-z|\leq \frac{1}{8}\}\setminus\{|\xbar-z|\leq \frac{1}{8}\}).
	\end{align*}
	Since $\sup_{B_2}|u|\leq 1$, it follows that $\cJ_2\leq \kappa_3 |\abar|$. To compute $\cJ_3$, we see that, for $p\geq 2$, 
	$$
	|J_p(u(\xbar)-u(z)) - J_p(u(\ybar)-u(z))|
	\leq \kappa (|u(z)|+1)^{p-2} |u(\xbar)-u(\ybar)|\leq \kappa_4 (|u(z)|+1)^{p-2} |\abar|^{\upkappa}.
	$$
	Hence, by \eqref{ET3.1A},
	\begin{align*}
		\cJ_3 &\leq \kappa_4 |\abar|^{\upkappa}\int_{|\ybar-z|\geq \frac{1}{8}}\frac{(|u(z)|+1)^{p-2}}{|z-\ybar|^{n+ps}}\dz
		\\
		&\leq \kappa_4 |\abar|^{\upkappa} \left[ \int_{|\ybar-z|\geq \frac{1}{8}} \frac{(|u(z)|+1)^{p-1}}{|z-\ybar|^{n+ps}}\dz \right]^{\frac{p-2}{p-1}}
		\left[ \int_{|\ybar-z|\geq \frac{1}{8}} \frac{1}{|z-\ybar|^{n+ps}}\dz \right]^{\frac{1}{p-1}}
		\\
		&\leq \kappa |\abar|^{\upkappa}
	\end{align*}
	for some constants $\kappa_4, \kappa$. For $p\in (1,2)$, we use 
$$
|J_p(u(\xbar)-u(z)) - J_p(u(\ybar)-u(z))|
\leq 2 |u(\xbar)-u(\ybar)|^{p-1} \leq \kappa |\abar|^{\upkappa(p-1)},
$$
giving us $\cJ_3\leq \kappa |\abar|^{\upkappa(p-1)}$. 
Plug-in these estimates in \eqref{ET3.1H} we have
	\begin{equation*}
		|I_4|\leq \kappa \max\{|\abar|^{\tilde\alpha}, |\abar|^{\upkappa}, |\abar|^{\upkappa(p-1)}\},
	\end{equation*}
which completes the proof.
\end{proof}

Now we are ready to provide the
\begin{proof}[Proof of Theorem~\ref{T-3.1}]	
For $I_1$, using Lemma~\ref{lemcone} and Lemma~\ref{L1.5}-(ii), we have
$$
I_1 \geq C_\varepsilon L^{p-1} (\log^2 |\bar a|)^{-\beta}.
$$
For $I_2$, we use Lemma~\ref{lemD1}, similar to the proof of Theorem~\ref{Tmain-1},
	$$
	I_2 \geq -c L^{p-1} \varepsilon_1^{p(1 - s)} (\log^2(|\abar|))^{-\beta}.
	$$
For $I_3$, with the choice of $\upkappa$ in Lemma~\ref{lemD2critical} we obtain
	\begin{equation*}
		I_3\geq -C |\bar a|^{\theta_1}.
	\end{equation*}
For $I_4$, we use Lemma~\ref{lemI4critical} to get
$$
I_4 \geq -C |\bar a|^{\theta_3}.
$$
Now we first choose $\varepsilon_1$ small enough in terms of
$C_\varepsilon$, then combine the estimates in~\eqref{ET3.1B}
arrive at
\begin{align*}
\frac{1}{2} C_\varepsilon L^{p-1} (\log^2(|\abar|))^{-\beta} \leq C_{\varepsilon_1} \max\{|\bar a|^{\theta_1},|\abar|^{\theta_3}, (\log^2(|\abar|))^{-\beta}\},
	\end{align*}
for all $L\geq L_0$, where $L_0$ is a suitable constant dependent on $\dataex$.
Since $|\abar|\to 0$ as $L\to\infty$, the above inequality leads to contradiction and therefore, the proof follows. This completes the proof of the first part.

\medskip

We finish with the logarithmic correction of Lipschitz bounds when $f$ is merely bounded. Dividing $u$ by $M=\sup_{B_2} |u| +  \tail(u; 0, 2) + C^{\frac{1}{p-1}} $, we assume that $C=1$ and \eqref{E1.1} holds.

We only provide a proof for $p\geq 2$, and proof for $p\in (1,2)$ will be analogous.
As before, we consider the regularizing function $\varphi_c$, the localization function $\psi$,
and the doubling function $\Phi$ from the first part. Let
$$
\mathscr{M} := \mathscr{M}(L)= \sup_{B_2 \times B_2} \Phi.
$$

If $\mathscr{M} \leq 0$ for all $L$ large enough, we get that $u$ is Lipschitz continuous in $B_1$ and our required estimate holds.
Thus, we assume that $\mathscr{M} > 0$ for some $L$ large enough. 
From the calculations in the first part and
gathering the estimates of $I_1, ..., I_4$ into~\eqref{ET3.1B},  we arrive at
\begin{align*}
\frac{1}{2} C_\varepsilon L^{p - 1} (\log^2 |\bar a|)^{-\beta} \leq & 2  + \kappa \max\{|\abar|^{\theta_1}, |\abar|^{\theta_3}\}  .
\end{align*}
The choice of $\upkappa$ is fixed as in the first part
(that is, Lemma~\ref{lemD2critical} and ~\ref{lemI4critical}) so that the exponents of $|\abar|$ in the above expression are all positive.
Since $|\abar|\to 0$ as $L\to \infty$, we can choose $L_0>1$ large enough so that 
$$ \frac{1}{4} C_\varepsilon L^{p - 1} |\log |\bar a||^{-2\beta}\leq 2\Rightarrow  |\bar a|\leq e^{-\vartheta_1 L^{\vartheta_2}}$$
for all $L\geq L_0=L_0(\data)$, where $\vartheta_2=\frac{p-1}{2\beta}$ and $\vartheta_1=(\frac{C_\varepsilon}{8})^{\frac{1}{2\beta}}$.
Coming back to the definition of $\mathscr{M}$, we see that
if $\mathscr{M}(L)>0$ for some $L\geq L_0$, then for all $x, y \in B_1$ we have
$$
u(x) - u(y) - L \varphi(x - y) - \psi(x) \leq u(\bar x) - u(\bar y) - L \varphi(\bar x - \bar y) - \psi(\bar x),
$$
implying,
\begin{equation}\label{AB007}
u(x) - u(y) \leq 2L |x - y| + C_\upkappa |\bar x - \bar y|^\upkappa \leq L |x - y| + C_\upkappa e^{-\vartheta_1 \upkappa L^{\vartheta_2}}.
\end{equation}

It is also important to note that \eqref{AB007} holds, even if $\mathscr{M}(L)\leq 0$. Thus, \eqref{AB007} holds
for all $L\geq L_0$ and for all $x, y\in B_1$.
Note that the map $L\mapsto L |x - y| + C_\upkappa e^{-\vartheta_1 \upkappa L^{\vartheta_2}}$ attains minimum at the point $\tilde{L}$ satisfying
$$|x-y|= C_\upkappa\,\vartheta_2\, e^{-\vartheta_1 \upkappa \tilde{L}^{\vartheta_2}} \tilde{L}^{\vartheta_2-1}.$$
It is easily seen that there exists $\kappa(L_0)\in (0, 1)$ such that for any $|x-y|\leq \kappa(L_0)$, the largest $\tilde{L}$ satisfying the above 
equations must be larger than $L_0$. Furthermore, 
$$|x-y|= C_\upkappa\,\vartheta_2\, e^{-\vartheta_1 \upkappa \tilde{L}^{\vartheta_2}} \tilde{L}^{\vartheta_2-1}
\leq e^{-\frac{\vartheta_1}{2} \upkappa \tilde{L}^{\vartheta_2}} \tilde{C}_\upkappa\Rightarrow
\tilde{L}\leq \left[-\frac{2}{\upkappa\vartheta_1}\log\left(\frac{|x-y|}{\tilde{C}_\upkappa}\right)\right]^{\frac{1}{\vartheta_2}},
$$
where $\tilde{C}_\upkappa=C_\upkappa \max_{L\geq 1} e^{-\frac{\vartheta_1}{2} \upkappa \tilde{L}^{\vartheta_2}}\tilde{L}^{\vartheta_2-1}$.
Thus, for $|x-y|\leq \kappa(L_0)$ we obtain
\begin{align*}
|u(x)-u(y)|\leq \tilde{L}|x-y| + \frac{1}{\vartheta_2}|x-y|\tilde{L}^{1-\vartheta_2}\leq \kappa |x-y|
\left[-\log\left(\frac{|x-y|}{\tilde{C}_\upkappa}\right)\right]^{\frac{1}{\vartheta_2}}
\end{align*}
for some constant $\kappa$. Since $\sup_{B_2}|u|\leq 1$, we can find $C_1$ such that
$$|u(x)-u(y)|\leq C_1 |x-y|\,(1+|\log|x-y||^{\frac{1}{\vartheta_2}})\quad \text{for all}\; x, y\in B_1.$$
This completes the proof.
\end{proof}

\begin{proof}[Proof of Theorem~\ref{Main2}]
Since $u\in C(\Omega)$ by the arguments of Theorem~\ref{Main},
the proof follows from Proposition~\ref{Prop1.3} and Theorem~\ref{T-3.1}.
\end{proof}

\bigskip

\subsection*{Acknowledgement}
We are grateful to the referees for their constructive comments and suggestions, which have improved the quality and presentation of the article.
We also thank Fernando Quiros for bringing this question of regularity to our notice, and Olivier Ley for pointing out the proof of the log-Lipschitz estimates in Theorem~\ref{T-3.1}.
This research of Anup Biswas was supported in part by a SwarnaJayanti
fellowship SB/SJF/2020-21/03. Erwin Topp was supported by CNPq Grant 306022. Both authors were also supported by a CNPq Grant 408169. 

\end{document}